\newcommand{\R}{\mathbb{R}}
\newcommand{\Z}{\mathbb{Z}}
\newcommand{\N}{\mathbb{N}}
\newcommand{\C}{\mathbb{C}}
\newcommand{\D}{\mathbb{D}}
\newcommand{\T}{\mathbb{T}}
\newcommand{\classeC}{\mathcal{C}}
\renewcommand{\Re}{\operatorname{Re}}
\newcommand{\un}{\mathds{1}}
\newcommand{\Res}{\operatorname{Res}}
\newcommand{\Tr}{\operatorname{Tr}}
\newcommand{\sinc}{\operatorname{sinc}}
\newcommand{\Gr}{\operatorname{Gr}}
\renewcommand{\mod}{\operatorname{mod}}
\newcommand{\hamilton}{\mathcal{H}}
\renewcommand\d{\,{\mathrm d}}
\newcommand{\gdO}{{\mathcal O}}
\newcommand{\longrightarroww}[2] {\mathop{\longrightarrow}\limits_{#1}^{#2}}
\newcommand{\weaklim}[2] {\mathop{\rightharpoonup}\limits_{#1}^{#2}}
\newtheorem{mydef}{Definition}[section]
\newtheorem{thm}[mydef]{Theorem}
\newtheorem{lem}[mydef]{Lemma}
\newtheorem{prop}[mydef]{Proposition}
\newtheorem{cor}[mydef]{Corollary}
\newtheorem{rk}[mydef]{Remark}
\title{Zero-dispersion limit for the Benjamin-Ono equation on the torus with single well initial data.}
\author{Louise Gassot}
\date{}
\newcommand{\Addresses}{{
  \bigskip
  \footnotesize
\noindent
  \textsc{ICERM, Brown University, 121 South Main Street, Providence, RI 02903, USA.}\par\nopagebreak
  \noindent
  \textit{E-mail address:} \texttt{louise\_gassot@brown.edu}
}}
\begin{document}
\maketitle
\abstract{
We consider the zero-dispersion limit for the Benjamin-Ono equation on the torus. We prove that when the initial data is a single well, the zero-dispersion limit exists in the weak sense and is uniform on every compact time interval. Moreover, the limit is equal to the signed sum of branches for the multivalued solution of the inviscid Burgers equation obtained by the method of characteristics. This result is similar to the one obtained  by Miller and Xu for the Benjamin-Ono equation on the real line for decaying and positive initial data. We also establish some precise asymptotics of the spectral data with initial data $u_0(x)=-\beta\cos(x)$, $\beta>0$,  justifying our approximation method, which is analogous to the work of Miller and Wetzel concerning a family of rational potentials for the Benjamin-Ono equation on the real line.}

\tableofcontents


\section{Introduction}


We consider the zero-dispersion limit $\varepsilon\to 0$ for the Benjamin-Ono equation
\begin{equation}\tag{BO-$\varepsilon$}\label{eq:bo}
\partial_t u
	=\partial_x(\varepsilon |\partial_{x}|u-u^2).
\end{equation}

The Benjamin-Ono equation~\cite{Benjamin1967,Ono1977} describes a certain regime of long internal waves in a two-layer fluid of great depth. The parameter $\varepsilon$ describes the balance between the dispersive term and the nonlinear term. For $\varepsilon=0$, this equation becomes the inviscid Burgers equation, which causes the formation of shocks in finite time. When $\varepsilon>0$, the dispersive term prevents the formation of a dispersive shock, replacing it by a train of waves (see for instance~\cite{MillerXu2011} for numerical simulations on the real line). It is expected that  right after the breaking time, the amplitude of shock waves does not decrease with $\varepsilon$, while the wavelength is proportional to $\varepsilon$. As a consequence, the limit can exist in the weak sense in the shock region.

\subsection{Zero-dispersion limit}
In this paper, we describe the weak zero-dispersion limit for the Benjamin-Ono equation at all times given a single well initial data $u_0\in\classeC^3(\T)$.

\begin{mydef}[Single well potential]\label{def:single well}
We say that $u_0\in\classeC^3(\T)$ is a single well potential if the following holds:
\begin{enumerate}
\item $u_0$ is real valued with zero mean;
\item $u_0(0)=\min_{x\in\T}u(x)$;
\item there exists $x_{\max}\in (0,2\pi)$ such that $u_0'>0$ on $(0,x_{\max})$ and $u_0'<0$ on $(x_{\max},2\pi)$;
\item there are exactly two inflection points $\xi_-\in(0,x_{\max})$ and $\xi_+\in(x_{\max},2\pi)$ such that $u_0''(\xi_{\pm})=0$, and the inflection points are simple $u_0'''(\xi_{\pm})\neq 0$.
\end{enumerate}
\end{mydef}
The latter condition only aims at simplifying the study of the Burgers equation with initial data $u_0$ in the proof of Theorem~\ref{thm:link_burgers}, and could be removed.

In this case, for every $\eta\in(\min u_0,\max u_0)$, $\eta$ has exactly two antecedents by $u_0$. We denote
\[
x_-(\eta)=\inf \{x\in[0,2\pi] \mid u_0(x)=\eta\},
\]
\[
x_+(\eta)=\sup \{x\in[0,2\pi] \mid u_0^(x)=\eta\}.
\]

 According to the work of Miller and Xu on the real line~\cite{MillerXu2011}, the relevant zero-dispersion limit for the Benjamin-Ono equation is the {\it multivalued} solution to the Burgers equation obtained by the method of characteristics. More precisely, every point $u^B$ is an image of this multivalued solution at time $t$ with abscissa $x$ as soon as it solves the implicit equation
\[
u^B=u_0(x-2u^Bt).
\]
Given $t$ and $x$, there may be several solutions $u^B$ that are denoted $u_0^B(t,x)<\dots<u_{2P(t,x)}^B(t,x)$, see Figure~\ref{fig:multivalued}. We define the signed sum of branches as
\begin{equation}\label{eq:u_alt}
u^B_{alt}(t,x):=\sum_{n=0}^{2P(t,x)}(-1)^nu_n^B(t,x).
\end{equation}
\begin{figure}
\begin{center}
\includegraphics[scale=0.3]{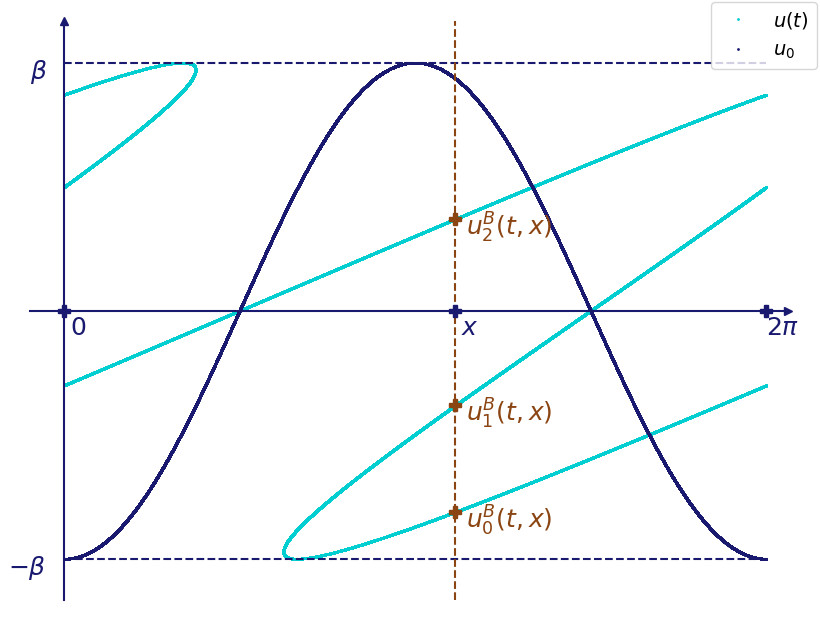}
\end{center}
\caption{Multivalued solution of the Burgers equation obtained by the method of characteristics, with initial data $u_0(x)=-\beta\cos(x)$}\label{fig:multivalued}
\end{figure}

Our main result is as follows.
\begin{thm}[Zero-dispersion limit for the Benjamin-Ono equation]\label{thm:main}
Let $u_0\in\classeC^3(\T)$ be a single well initial data. Let $u^B_{alt}$ be the signed sum of branches for the multivalued solution to the inviscid Burgers equation with initial data $u_0$. Then there exists a family $(u_0^{\varepsilon})_{\varepsilon>0}\subset L^2(\T)$ of initial data such that $u_0^{\varepsilon}\to u_0$ in $L^2(\T)$ and the following holds. Uniformly on finite time intervals, the solution $u^{\varepsilon}$ to the Benjamin-Ono equation~\eqref{eq:bo} with parameter $\varepsilon$ and initial data $u_0^{\varepsilon}$ converges weakly to $u_{alt}^B$ in $L^2_{r,0}(\T)$ as $\varepsilon\to 0$:
\[
u^{\varepsilon}\weaklim{}{} u^B_{alt}.
\]
If $u_0(x)=-\beta\cos(x)$ for some $\beta>0$, one can choose $u_0^{\varepsilon}=u_0$ for every $\varepsilon>0$.
\end{thm}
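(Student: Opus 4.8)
The plan is to exploit the complete integrability of the Benjamin-Ono equation via its Lax pair and the explicit formulas for the spectral data, combined with a WKB-type analysis of the spectrum in the semiclassical regime $\varepsilon\to 0$. The key structural fact is that for $u^\varepsilon$ solving \eqref{eq:bo}, the Lax operator $L_u^\varepsilon=\varepsilon D-T_u$ (acting on the Hardy space $\hilbert^2_+(\T)$, where $D=-i\partial_x$ and $T_u$ is the Toeplitz operator with symbol $u$) has eigenvalues $\lambda_0^\varepsilon>\lambda_1^\varepsilon>\cdots$ that are conserved by the flow, while the associated ``angle'' variables evolve linearly. Miller and Xu's heuristic on the line, transposed to the torus via the explicit inverse spectral formulas (in the spirit of Gérard--Kappeler), says that the weak limit of $u^\varepsilon$ is governed by the limiting density of eigenvalues. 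First I would set up these explicit trace-type formulas expressing $u^\varepsilon(t,x)$ (or its Fourier coefficients, or bilinear pairings $\langle u^\varepsilon(t),\phi\rangle$) in terms of $(\lambda_n^\varepsilon)$ and the generating function / reproducing-kernel data, so that the weak limit reduces to a limit of spectral sums.

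Second, I would carry out the semiclassical analysis of the spectrum of $L_{u_0^\varepsilon}^\varepsilon$. For the single-well potential, a Bohr--Sommerfeld quantization condition determines the eigenvalues: $\lambda_n^\varepsilon\approx\lambda$ where $n\varepsilon$ matches (up to $2\pi$ factors) the symbol-space ``action'' $\mathcal A(\lambda)=\frac{1}{2\pi}\bigl(x_+(\lambda)-x_-(\lambda)\bigr)$-type integral, i.e. the measure of the set $\{x: u_0(x)<\lambda\}$; this is exactly where conditions (2)--(4) in Definition~\ref{def:single well} (monotonicity on each side of $x_{\max}$, simple inflection points) guarantee a clean WKB picture with no turning-point pathologies. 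The choice of $u_0^\varepsilon$ is dictated here: one picks $u_0^\varepsilon\to u_0$ in $L^2$ for which the spectral data are \emph{exactly} the quantized values, so that the approximation is not merely asymptotic; for $u_0=-\beta\cos x$ the explicit computation of the spectrum (the content of the second half of the abstract, analogous to Miller--Wetzel) shows the quantization is already exact and one may take $u_0^\varepsilon=u_0$.

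Third, I would push this through the time evolution: because the eigenvalues are invariant and the angles rotate linearly, the reconstruction formula at time $t$ becomes, in the $\varepsilon\to0$ limit, a finite sum over the ``active'' branches, and a stationary-phase / steady-state analysis identifies each contributing eigenvalue-cluster with a solution $u_n^B$ of the implicit Burgers equation $u^B=u_0(x-2u^Bt)$, with the alternating signs $(-1)^n$ emerging from the structure of the reproducing kernel (consecutive eigenvalues contribute with opposite signs). Matching the count $2P(t,x)+1$ of real branches to the number of solutions of the implicit equation, and checking that the non-contributing part vanishes weakly (oscillations at scale $\varepsilon$ average out when tested against a fixed $\phi\in L^2$), yields $u^\varepsilon\weaklim{}{} u^B_{alt}$. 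The uniformity on compact time intervals follows because all estimates are locally uniform in $t$, the breaking times of the branches being a closed set of measure zero handled by Theorem~\ref{thm:link_burgers}.

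The main obstacle I expect is the uniform semiclassical control of the spectral data and of the reconstruction formula near the shock formation, i.e. making the Bohr--Sommerfeld analysis and the stationary-phase argument genuinely uniform in $(t,x)$ across times where branches appear or merge (caustics of the Burgers characteristics); this is precisely where one needs the full strength of the single-well hypothesis and the explicit asymptotics of the spectral data, and where the bookkeeping of which eigenvalues are ``active'' is most delicate.
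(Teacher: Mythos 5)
Your outline does share the paper's skeleton: Fourier coefficients of $u^{\varepsilon}(t)$ written as spectral sums via the trace formula $\widehat{u}(k)=\varepsilon\Tr(M(u;\varepsilon)^k)$ (Proposition~\ref{prop:u-trM}), a Bohr--Sommerfeld-type quantization $\int_{-\lambda_n}^{\max u_0}F(\eta)\,\mathrm{d}\eta\approx n\varepsilon$ defining the approximate data, conservation of the eigenvalues and linear rotation of the angles under the flow, and a separate spectral study for $-\beta\cos x$. However, two concrete gaps remain. First, you only quantize the eigenvalues. In Definition~\ref{def:approximate_Birkhoff} the admissible data must \emph{also} prescribe the phase constants, $\theta_{n+1}-\theta_n\approx\pi-\tfrac{x_+(-\lambda_n)+x_-(-\lambda_n)}{2}$; these carry the spatial information (the mean position of the two preimages of $\eta=-\lambda_n$), and without them the spectral sums can at best reproduce the distribution function of $u_0$, not $u^B_{alt}(t,\cdot)$ as a function of $x$. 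The data $u_0^{\varepsilon}$ is then obtained through the inverse Birkhoff map, and the claim $u_0^{\varepsilon}\to u_0$ strongly needs the extra norm condition $\|u_0^{\varepsilon}\|_{L^2}\to\|u_0\|_{L^2}$ checked via Parseval (Lemma~\ref{lem:exist_admissible}). Also, for the cosine the quantization is not ``exact'': what is exact is that all phase constants vanish (Proposition~\ref{prop:cos}); the eigenvalue asymptotics hold only up to $O(\varepsilon\sqrt{\varepsilon})$ and are proved via the eigenvalue equation and stationary phase/Laplace analysis (Theorem~\ref{thm:asymptotics}).

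Second, your step identifying the limit with $u^B_{alt}$ --- a pointwise stationary-phase analysis at fixed $(t,x)$ selecting ``active'' eigenvalue clusters, with the alternating signs coming from the reproducing kernel --- is not how the argument can be closed as stated, and it runs head-on into the caustic-uniformity problem you yourself flag. The paper avoids that problem entirely by never working pointwise in $x$: after restricting $\varepsilon\Tr(M^k)$ to near-diagonal entries and small eigenvalues (Proposition~\ref{prop:M_ACV}), a Toeplitz identity (Lemma~\ref{lem:Toeplitz}) collapses the multiple sum into a Riemann sum of $\sinc(k\pi F(-\lambda_n))$ times a phase (Theorem~\ref{prop:Riemann_CV}); the time dependence enters only through $\omega_{n+1}-\omega_n=2\lambda_n+\varepsilon$, producing the shift $x_\pm(\eta)\mapsto x_\pm(\eta)+2\eta t$; and the limiting integral $-\tfrac{i}{2k\pi}\int\bigl(e^{-ik(x_+(\eta)+2\eta t)}-e^{-ik(x_-(\eta)+2\eta t)}\bigr)\mathrm{d}\eta$ is identified with $\widehat{u^B_{alt}(t)}(k)$ by an elementary change of variables on the branches (Proposition~\ref{prop:fourier_u_alt}), a formula smooth in $t$ across the breaking time, so no branch counting or uniformity at caustics is ever needed. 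Finally, weak convergence uniform on $[0,T]$ is concluded from convergence of each Fourier coefficient together with the uniform $L^2$ bound coming from conservation of the $L^2$ norm; your remark that ``oscillations average out'' does not by itself supply this compactness ingredient.
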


\begin{rk}[Strong (resp. weak) convergence before  (reps. after) the shock time]
As long as $u^B$ is a well-defined function, one knows thanks to the conservation of the $L^2$ norm that the convergence in Theorem~\ref{thm:main} is strong.

However, for instance when $u_0(x)=-\beta\cos(x)$, the convergence cannot be strong right after the breaking time $T$ for the Burgers equation. Indeed, for $(t-T)$ positive and small enough, there holds (see Lemma~\ref{lem:CV_weak}) 
\[
\|u^{\varepsilon}(t)\|_{L^2(\T)}=\|u_0\|_{L^2(\T)}>\|u^B_{alt}\|_{L^2(\T)}.
\]
\end{rk}

\begin{rk}[Convergence for small times]
For $\classeC^{\infty}$ initial data, a WKB approximation of the form
\[
u^{\varepsilon}(t,x)=\sum_{j=0}^{+\infty}a_j(t,x)\varepsilon^j
\]
would enable us to get an asymptotic expansion for the zero dispersion limit up to the shock time, by transforming the problem into the Burgers equation for $a_0$ and transport equations for the higher order terms. However, this approach would not give access to information on the solution after the shock formation.
\end{rk}

%



\paragraph{Zero-dispersion limit for the Benjamin-Ono equation on the torus}

To the best of our knowledge, not much seems to be known concerning the zero-dispersion limit for the Benjamin-Ono equation on the torus. A first approach using Whitham modulation approximation can be found in the work of Matsuno~\cite{Matsuno1998}. More recently, Moll~\cite{Moll2019-1} gives a convergence result of the Lax eigenvalues in the zero-dispersion limit. However, the type of convergence is not sufficient to establish that the corresponding approximate solution is a good approximation in the classical space $L^2(\T)$. In this direction, one can also mention a similar approach in~\cite{Moll2019-2} for the quantum periodic Benjamin-Ono equation.

Our strategy relies on the recent work of Gérard, Kappeler and Topalov who constructed  Birkhoff coordinates for the Benjamin-Ono equation in~\cite{GerardKappeler2019}, which adapt to equation~\eqref{eq:bo} through a rescaling. A further study of this transformation appears in~\cite{GerardKappelerTopalov2020, GerardKappelerTopalov2020-2, GerardKappelerTopalov2021, GerardKappelerTopalov2021analyticity} (see also~\cite{Gerard2019} for a survey on the topic).

\paragraph{Zero-dispersion limit for the Benjamin-Ono equation on the real line}

Theorem~\ref{thm:main} is similar to Benjamin-Ono equation the real line studied by Miller and Xu~\cite{MillerXu2011}, where the  authors prove the following result. Let $u_0$ be an initial data satisfying some admissibility conditions, let $u_0^B(t,x)<\dots<u_{2P(t,x)}^B(t,x)$ be the branches of the multivalued solution to the inviscid Burgers equation obtained by the method of characteristics, and let $u^B_{alt}$ be the signed sum of branches as in~\eqref{eq:u_alt}. Then there exists a sequence of initial data $u_0^{\varepsilon}$ such that uniformly on compact time intervals, the solution $u^{\varepsilon}$ is weakly convergent  in $L^2(\R)$ to $u^B_{alt}$. This result also implies strong convergence for all $0\leq t<T$, where $T$ is the breaking time for the Burgers equation.
However, the approach for the zero-dispersion limit for the Benjamin-Ono on the line needs to be restricted to positive initial data with prescribed tail behavior 
\[
|x|^{q+1}\partial_x u_0\longrightarroww{}{x\to\pm\infty}C_{\pm}
\] for some $q>1$, and a generalization to more general potentials as in Theorem~\ref{thm:main} seems still unknown. Finally, Miller and Xu establish a similar zero-dispersion convergence result for the Benjamin-Ono hierarchy in~\cite{MillerXu2011hierarchy}, and it would be interesting to compare their result to the Benjamin-Ono hierarchy on the torus, see~Remark~\ref{rk:hierarchy}.

The approach of Miller and Xu is based on inverse scattering transform techniques, first formally derived by Fokas and Ablowitz~\cite{AblowitzFokas1983}, and then rigorously written by Coifman and Wickerhauser~\cite{CoifmanWickerhauser1990} for small and decaying data, see also~\cite{KaupMatsuno1998}. The strategy is as follows. The initial data is first approximated by a rational potential of Klaus-Shaw type ${u^{\varepsilon}}_0$, that is, a rational potential with only one bump. The authors first guess the right  approximate eigenvalues ${\lambda_n}(\varepsilon)$ and phase constants of $u^{\varepsilon}(0)$ in order for the scattering problem to approximate well the solution. Then they prove that for every time $t$ (in particular for $t=0$), there holds weak convergence of $ {u^{\varepsilon}}(t)$ to $u^B_{alt}(t)$ as $\varepsilon\to 0$. This approximation is necessary in order to have an explicit inversion formula for the scattering data.  A possible progress might come from the recent work on the direct scattering problem for the Benjamin-Ono equation~\cite{Wu2016,Wu2017}, and from the construction of a Birkhoff map started in the paper of Sun~\cite{Sun2020}.

In~\cite{MillerWetzel2016rational}, Miller and Wetzel establish exact formulae for positive rational initial conditions with simple poles.  Using these formulae, the authors are able to derive a precise asymptotic expansion for the scattering data in the zero dispersion limit~\cite{MillerWetzel2016}. 
 In this special case, the asymptotics enable to choose the initial data $u_0$ itself instead of an $\varepsilon$-dependent initial data $u^{\varepsilon}(0)$ in the zero-dispersion limit problem. As we will see in part~\ref{part:inversion_formula}, however, this approach seems uncertain for rational potentials on the torus. On the torus, we are still able to provide a precise asymptotic expansion for the eigenvalues when $u_0(x)=-\beta\cos(x)$ is not a finite gap potential in Theorem~\ref{thm:asymptotics} below, and we hope to extend this asymptotic expansion to more general initial data in a subsequent work.

\paragraph{Zero-dispersion limit for the KdV equation}

The zero-dispersion limit problem was first investigated for the Korteveg-de Vries equation on the real line by Lax and Levermore~\cite{LaxLevermore}
\[
\partial_t u-3\partial_x(u^2)+\varepsilon^2\partial_{xxx}u=0,
\]
when the initial data is negative or zero and decays sufficiently fast at infinity. 
 The authors construct approximate scattering data, or approximate initial data $u_0^{\varepsilon}$, such that the solutions $u^{\varepsilon}(t)$ are convergent to some limit in the weak sense when $\varepsilon\to 0$, uniformly on compact time intervals. The limit is different from the Benjamin-Ono equation, and can only be expressed implicitly as the solution of some variational problem. This approach was adapted to positive initial data in~\cite{Venakides1985}. 
The authors use WKB methods in order to approximate the scattering data associated to the KdV equation, and their analysis is based on the inverse scattering transform. Venakides then describes the nature of oscillations that appear after the dispersive shock time in~\cite{Venakides1991}. The theory was developed in~\cite{DeiftVenakidesZhou1997} using the steepest-descent method in order to get strong convergence results. A further refinement of these asymptotics can be found in the work of Claeys and Grava \cite{ClaeysGrava2009universality, ClaeysGrava2010painleve, ClaeysGrava2010solitonic}, who exhibit in particular a universal wave profile starting from $\varepsilon$-independent initial data.

On the torus, Venakides~\cite{Venakides1987} computes the weak zero dispersion limit for periodic initial data. The author proves that a shock appears for small dispersion parameter at the breaking time of the Burgers equation, causing the emergence of rapid oscillations with wavenumbers and frequencies of order $\gdO(1/\varepsilon)$.  In this purpose, he establishes asymptotics on the exact solution instead of relying on an approximation. More recently, an asymptotic expansion of the spectral parameters has then been established in~\cite{DengBiondiniTrillo2016} with the cosine initial data in order to justify the Zabusky-Kruskal experiment~\cite{ZabuskyKruskal1965}.


\subsection{Distribution of the Lax eigenvalues}

Our strategy of proof relies on the complete integrability of the Benjamin-Ono equation in the sense that it admits Birkhoff coordinates, constructed in in~\cite{GerardKappeler2019}. This transformation enables us to consider general initial data in $L^2_{r,0}(\T)$, that is, in $L^2(\T)$, real-valued and with mean zero. The construction of Birkhoff coordinates relies on the eigenvalues $\lambda_n(u_0;\varepsilon)$ of the Lax operator $L_{u_0}(\varepsilon)$, and some phase constants $\theta_n(u_0;\varepsilon)$ depending on the eigenfunction of $L_{u_0}(\varepsilon)$ (see part~\ref{part:notation_lax} for more details). A careful study of the spectral parameters $\lambda_n(u_0;\varepsilon)$ and $\theta_n(u_0;\varepsilon)$ for $n\geq 1$ leads us to introduce the following asymptotic approximation for the Lax eigenvalues and phase constants.


Let us denote for $\eta\in\R$
\[
F(\eta):=\frac{1}{2\pi}\operatorname{Leb}(x\in[0,2\pi]\mid u_0(x)\geq\eta).
\] 
The eigenvalues are expected to follow some quantization rule depending on the distribution function $F$ as follows (see also Figure~\ref{fig:distribution}).

\begin{mydef}[Admissible approximate initial data]\label{def:approximate_Birkhoff}
Let $u_0$ be a single well initial data. We say that the family of approximate initial data $(u_0^{\varepsilon})_{\varepsilon>0}$  is admissible if $\|u_0^{\varepsilon}\|_{L^2(\T)}\to\|u_0\|_{L^2(\T)}$ as $\varepsilon\to 0$ and the following holds. For every $\delta>0$, there exist $C(\delta), K(\delta)>0$ such that for every $\varepsilon>0$, the eigenvalues $\lambda_n=\lambda_n(u_0^\varepsilon;\varepsilon)$ and phase constants $\theta_n=\theta_n(u_0^{\varepsilon};\varepsilon)$ satisfy the following properties. 
\begin{enumerate}
\item (Small eigenvalues) If $\lambda_n+\varepsilon,\lambda_p+\varepsilon\in\Lambda_-(\delta)=[-\max(u_0)+\delta,-\min(u_0)-\delta]$, then for $\varepsilon\leq \varepsilon_0(\delta)$ small enough,
\begin{equation*}
\left|\int_{-\lambda_p}^{-\lambda_n}F(\eta)\d\eta-(p-n)\varepsilon\right|
	\leq C(\delta)\varepsilon\sqrt{\varepsilon}.
\end{equation*}
\item (Large eigenvalues) We denote $\Lambda_+(\delta)=[-\min(u_0)+\delta,\infty)$. If $\lambda_n+\varepsilon,\lambda_p+\varepsilon\in\Lambda_+(\delta)=[-\min(u_0)+\delta,K(\delta)]$, then 
\[
|\lambda_p-\lambda_n-(p-n)\varepsilon|\leq C(\delta)\varepsilon\sqrt{\varepsilon},
\]
whereas if $\lambda_n+\varepsilon,\lambda_p+\varepsilon\in [K(\delta),\infty)$, then 
\[
|\lambda_p-\lambda_n-(p-n)\varepsilon|\leq \delta.
\]
\item (Other eigenvalues) There are  at least $\frac{\delta}{C\varepsilon}$ and at most $\frac{C\delta}{\varepsilon}$ eigenvalues $\lambda_n$ satisfying $\lambda_n+\varepsilon\in[-\min(u_0)-\delta,-\min(u_0)+\delta]$, and at   least $\frac{1}{C(\delta)\varepsilon}$ and at most $\frac{C\delta}{\varepsilon}$ eigenvalues in the region  $\lambda_n+\varepsilon\in[-\max(u_0),-\max(u_0)+\delta]$.
\item (Phase constants)
\begin{equation}\label{eq:phase}
\left|\theta_{n+1}-\theta_n-\left(\pi-\frac{x_+(-\lambda_n)+x_-(-\lambda_n)}{2}\right)\right|\leq C(\delta)\varepsilon\sqrt{\varepsilon}.
\end{equation}
\end{enumerate}
\end{mydef}

By applying the inverse Birkhoff transformation $(\Phi^{\varepsilon})^{-1}$, the choice of a family of eigenvalues and phase factors defines an approximate initial data $u_0^{\varepsilon}$. The set of admissible initial data is never empty, see Lemma~\ref{lem:exist_admissible}.


This distribution of the spectral parameters is completely justified in the case of the cosine initial data thanks to the following Theorem, which is our second main result.
\begin{thm}[Distribution of the Lax eigenvalues for cosine initial data]\label{thm:asymptotics}
Assume that $u_0(x)=-\beta\cos(x)$ for some $\beta>0$. 
Then the family $(u_0)_{\varepsilon>0}$ of $\varepsilon$-independent initial data is admissible.
\end{thm}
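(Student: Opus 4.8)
The plan is to compute the eigenvalues $\lambda_n(u_0;\varepsilon)$ of the Lax operator $L_{u_0}(\varepsilon)=\varepsilon D - T_{u_0}$ (acting on the Hardy space on the torus) explicitly when $u_0(x)=-\beta\cos(x)=-\tfrac{\beta}{2}(e^{ix}+e^{-ix})$, exploiting the fact that the Toeplitz operator $T_{u_0}$ is tridiagonal in the Fourier basis $(e^{inx})_{n\ge 0}$. Writing an eigenfunction as $\phi=\sum_{n\ge 0}c_n e^{inx}$, the eigenvalue equation $L_{u_0}(\varepsilon)\phi=\lambda\phi$ becomes the three-term recurrence
\begin{equation*}
-\tfrac{\beta}{2}c_{n+1}+(n\varepsilon-\lambda)c_n-\tfrac{\beta}{2}c_{n-1}=0,\qquad n\ge 0,
\end{equation*}
with the convention $c_{-1}=0$. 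This is a Harper-type / almost-Mathieu type difference equation, and after the standard substitution $c_n=J$-Bessel-like coefficients one recognizes that the three-term recurrence with linearly growing potential $n\varepsilon$ is solved by Bessel functions: setting $c_n = \mathcal{J}_{(\lambda-n\varepsilon)/\varepsilon}\!\big(\beta/\varepsilon\big)$ (up to normalization), the recurrence $\mathcal{J}_{\nu-1}(z)+\mathcal{J}_{\nu+1}(z)=\tfrac{2\nu}{z}\mathcal{J}_\nu(z)$ with $\nu=(\lambda-n\varepsilon)/\varepsilon$ and $z=\beta/\varepsilon$ matches exactly. The quantization condition is then the boundary condition at $n=-1$, i.e. $\mathcal{J}_{(\lambda+\varepsilon)/\varepsilon}(\beta/\varepsilon)=0$: the eigenvalues are governed by the zeros of the Bessel function $J_\mu(\beta/\varepsilon)$ as a function of the order $\mu$, with $\mu=(\lambda+\varepsilon)/\varepsilon$.

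Once this exact characterization is in place, the asymptotics of Definition~\ref{def:approximate_Birkhoff} follow from known uniform asymptotics for zeros of Bessel functions of large order and large argument. First I would non-dimensionalize by setting $z=\beta/\varepsilon\to\infty$ and looking at orders $\mu=z\kappa$ with $\kappa=(\lambda+\varepsilon)/\beta$ in a fixed compact range; the relevant regime $\lambda+\varepsilon\in[-\max u_0+\delta,-\min u_0-\delta]=[-\beta+\delta,\beta-\delta]$ corresponds to $\kappa\in(-1,1)$, which is precisely the oscillatory region of $J_{z\kappa}(z)$ where the Debye/uniform asymptotic expansion gives $J_{z\kappa}(z)\sim \text{(amplitude)}\cdot\cos\!\big(z\,\psi(\kappa)-\tfrac{\pi}{4}\big)$ with phase $\psi(\kappa)=\sqrt{1-\kappa^2}-\kappa\arccos\kappa$ (for $|\kappa|<1$). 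The spacing between consecutive zeros in $\mu$ is then, to leading order, $\pi/(z\,|\psi'(\kappa)|) = \pi\varepsilon/(\beta|\psi'(\kappa)|)$, and since $\psi'(\kappa)=-\arccos\kappa$ one gets consecutive eigenvalues spaced by $\varepsilon$ after matching $\int F$: indeed for the cosine potential one computes directly that $F(\eta)=\tfrac{1}{\pi}\arccos(\eta/\beta)=\tfrac{1}{\pi}\arccos(-\eta/\beta)$ appropriately, so that $\int_{-\lambda_p}^{-\lambda_n}F(\eta)\,d\eta$ is exactly $\tfrac{\beta}{\pi}$ times the increment of the Bessel phase $\psi$, which by the quantization rule equals $(p-n)\varepsilon$ up to the error from higher-order terms in the uniform asymptotic expansion — and that error is $O(\varepsilon\sqrt\varepsilon)$ as claimed, coming from the next term in the asymptotic series for the Bessel phase and from the curvature of $\psi$ over an $O(\varepsilon)$ window. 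The large-eigenvalue regime $\lambda+\varepsilon\ge -\min u_0+\delta=\beta+\delta$, i.e. $\kappa>1$, is the monotone (exponentially small) region of $J_{z\kappa}(z)$: here the single eigenvalue structure and the spacing $\approx\varepsilon$ comes from the fact that $T_{u_0}$ is a bounded perturbation of $\varepsilon D$ of size $O(\beta)$, and the explicit Bessel representation shows the correction to $n\varepsilon$ is governed by the recessive Bessel solution, giving spacing $\varepsilon + O(\varepsilon\sqrt\varepsilon)$ for $\lambda+\varepsilon$ in a bounded range and $\varepsilon+o(1)$ beyond. The counting estimates in item (3) follow by counting zeros of $J_{z\kappa}(z)$ for $\kappa$ in an $O(\delta)$ window near the turning point $\kappa=\pm1$, where the Airy-type transition applies and the density of zeros in $\mu$ degenerates like a power of the distance to the turning point, yielding the stated $\delta/(C\varepsilon)$-to-$C\delta/\varepsilon$ bounds.

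For the phase constants $\theta_n$, the plan is to use the explicit eigenfunction $\phi_n=\sum_{m\ge0}\mathcal{J}_{(\lambda_n-m\varepsilon)/\varepsilon}(\beta/\varepsilon)e^{imx}$ and recall the definition of $\theta_n(u_0;\varepsilon)$ from Part~\ref{part:notation_lax} in terms of, essentially, the argument of a suitable normalized spectral coefficient (the value $\langle \phi_n, 1\rangle$ or $\langle\phi_n, e^{ix}\rangle$ paired against the generating function of the Birkhoff map). Using the uniform Bessel asymptotics again, the phase accumulated between $\lambda_n$ and $\lambda_{n+1}$ is controlled by $z(\psi(\kappa_{n+1})-\kappa_{n+1}\arccos'\!\cdots)$ type quantities, and the point is that for the cosine the turning points of the WKB problem are symmetric, located at $x_\pm(-\lambda_n)$ with $x_+(-\lambda_n)+x_-(-\lambda_n)=2\pi$ by the evenness of $\cos$, so the combination $\pi-\tfrac{x_++x_-}{2}$ in~\eqref{eq:phase} is actually $0$ for the cosine; thus item (4) reduces to showing $|\theta_{n+1}-\theta_n|\le C(\delta)\varepsilon\sqrt\varepsilon$, which again is the error term in the Bessel/Debye expansion. (I should double-check the definition to make sure the normalization of $\theta_n$ matches, and carry the $O(\sqrt\varepsilon)$-precise term, but the structure is forced.)

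The main obstacle I anticipate is not the algebra of reducing to Bessel functions — that is clean — but obtaining the uniform asymptotic expansion of the zeros of $J_{z\kappa}(z)$ with an error that is genuinely $O(\varepsilon^{3/2})=O(z^{-3/2})$ \emph{uniformly} for $\kappa$ in the full range $[-1+\delta,1-\delta]$, including controlling how the constant $C(\delta)$ blows up as $\delta\to 0$ (i.e. as one approaches the turning points $\kappa=\pm1$, where the Debye expansion degenerates and must be patched with the Airy-region expansion). This requires quoting, or re-deriving, sufficiently precise forms of the classical results (Olver's uniform asymptotics for Bessel functions of large order, and the corresponding asymptotics for their zeros in the order), and then carefully integrating the phase to land exactly on the quantization $\int_{-\lambda_p}^{-\lambda_n}F=(p-n)\varepsilon + O(\varepsilon^{3/2})$. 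A secondary technical point is that $L_{u_0}(\varepsilon)$ also has infinitely many eigenvalues in the region $\lambda+\varepsilon$ large and negative / large and positive that one must show do not interfere — but since $F\equiv 0$ there and $T_{u_0}$ is bounded, this is handled by a comparison with the free operator $\varepsilon D$ and does not touch the delicate Bessel analysis.
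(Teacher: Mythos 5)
Your overall route is, in substance, the paper's own proof in different clothing: the eigenvalue equation of Proposition~\ref{prop:eigenvalue_equation}, derived there by extending the eigenfunction holomorphically to the unit disc, is precisely Schl\"afli's integral representation of the statement that a Bessel function of argument $\beta/\varepsilon$ and order determined by $\lambda/\varepsilon$ vanishes, and the stationary-phase/Laplace analysis plus the monotonicity and counting arguments of part~\ref{subsection:characterization} are exactly a hand-made uniform (Debye) analysis of those zeros in the order variable. The tridiagonal-recurrence derivation is a legitimate variant, but two points in your plan are genuinely defective.

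First, the selection of the Bessel solution, hence the sign of the order in your quantization condition, is wrong, and this sign carries the whole of condition (1). The recurrence has a two-dimensional solution space, and an $L^2_+$ eigenfunction must be the recessive solution as $n\to+\infty$; that solution is $c_n\propto(-1)^nJ_{n-\lambda/\varepsilon}(\beta/\varepsilon)$ (orders tending to $+\infty$, where $J$ decays superexponentially), whereas $c_n=J_{(\lambda-n\varepsilon)/\varepsilon}(\beta/\varepsilon)$ is the dominant solution (for non-integer order it is carried by the $Y$ part as the order tends to $-\infty$) — and with the sign convention of the recurrence you wrote it does not even satisfy it. The boundary condition $c_{-1}=0$ applied to the recessive solution gives $J_{-(\lambda+\varepsilon)/\varepsilon}(\beta/\varepsilon)=0$, not $J_{+(\lambda+\varepsilon)/\varepsilon}(\beta/\varepsilon)=0$; equivalently the Debye phase is $\beta\bigl(\sqrt{1-\kappa^2}-\kappa\arccos\kappa\bigr)$ evaluated at $\kappa=-\nu/\beta$ with $\nu=\lambda+\varepsilon$, whose $\nu$-derivative is $\arccos(-\nu/\beta)=\pi F(-\nu)$, since $F(\eta)=\frac{1}{\pi}\arccos(\eta/\beta)$ (not $\arccos(-\eta/\beta)$ — precisely the sign you leave ``appropriately'' ambiguous). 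With your sign the local eigenvalue density comes out as $F(\nu)/\varepsilon$ instead of $F(-\nu)/\varepsilon$, so the verification of $\bigl|\int_{-\lambda_p}^{-\lambda_n}F-(p-n)\varepsilon\bigr|\leq C(\delta)\varepsilon\sqrt{\varepsilon}$ would fail (one would instead get $\int_{-\lambda_p}^{-\lambda_n}F\approx(\lambda_p-\lambda_n)-(p-n)\varepsilon$). The paper's quantization $\int_{-\nu}^{\beta}F(\eta)\d\eta\approx\varepsilon\bigl(N+\tfrac34\bigr)$, consistent with the Weyl count $\frac{1}{2\pi\varepsilon}\int_0^{2\pi}(\lambda+u_0(x))_+\d x$, shows which sign is correct. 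The error is repairable, but it sits in the central formula of your argument and must be fixed before any of the asymptotics can be matched to Definition~\ref{def:approximate_Birkhoff}.

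Second, the phase constants. For the cosine, item (4) is not an asymptotic estimate to be read off from Debye remainders: all data are real, so the $\zeta_n(u_0;\varepsilon)$ are real and $\theta_n\in\{0,\pi\}$ whenever defined. What must be proved is exact sign information — that every gap $\gamma_n$ is strictly positive (otherwise $\theta_n$ is not even defined) and that $\overline{\zeta_{n+1}}\zeta_n>0$ for all $n$, whence $\theta_n=0$; combined with $x_+(-\lambda)+x_-(-\lambda)=2\pi$ (which you correctly observe) this gives \eqref{eq:phase} with zero error. The paper establishes exactly this, algebraically and not asymptotically, in Proposition~\ref{prop:cos}, using the identities $L_uSf_n=(\lambda_n+1)Sf_n+\frac{\beta}{2}\langle f_n|\un\rangle\un$ and $(\lambda_{n+1}-\lambda_n-1)\langle f_{n+1}|Sf_n\rangle=-\langle f_{n+1}|\un\rangle\langle u|Sf_n\rangle$ from G\'erard--Kappeler. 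Your plan — bound $|\theta_{n+1}-\theta_n|$ by the Bessel/Debye error term — cannot work as stated, because an estimate of moduli cannot decide a two-valued sign; you would have to track the signs of $\langle\un|f_n\rangle$ and of the normalizing products $\langle f_{n+1}|Sf_n\rangle$ through the explicit Bessel representation (conceivable, but absent from your sketch), or invoke the paper's algebraic argument. The remaining items (existence and uniqueness of one eigenvalue per admissible quantization integer, the large-eigenvalue regime, and the counting of the leftover eigenvalues near $\pm\beta$) are adequately envisioned and correspond to the paper's part~\ref{subsection:characterization}.
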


\begin{figure}
\begin{center}
\includegraphics[scale=0.32]{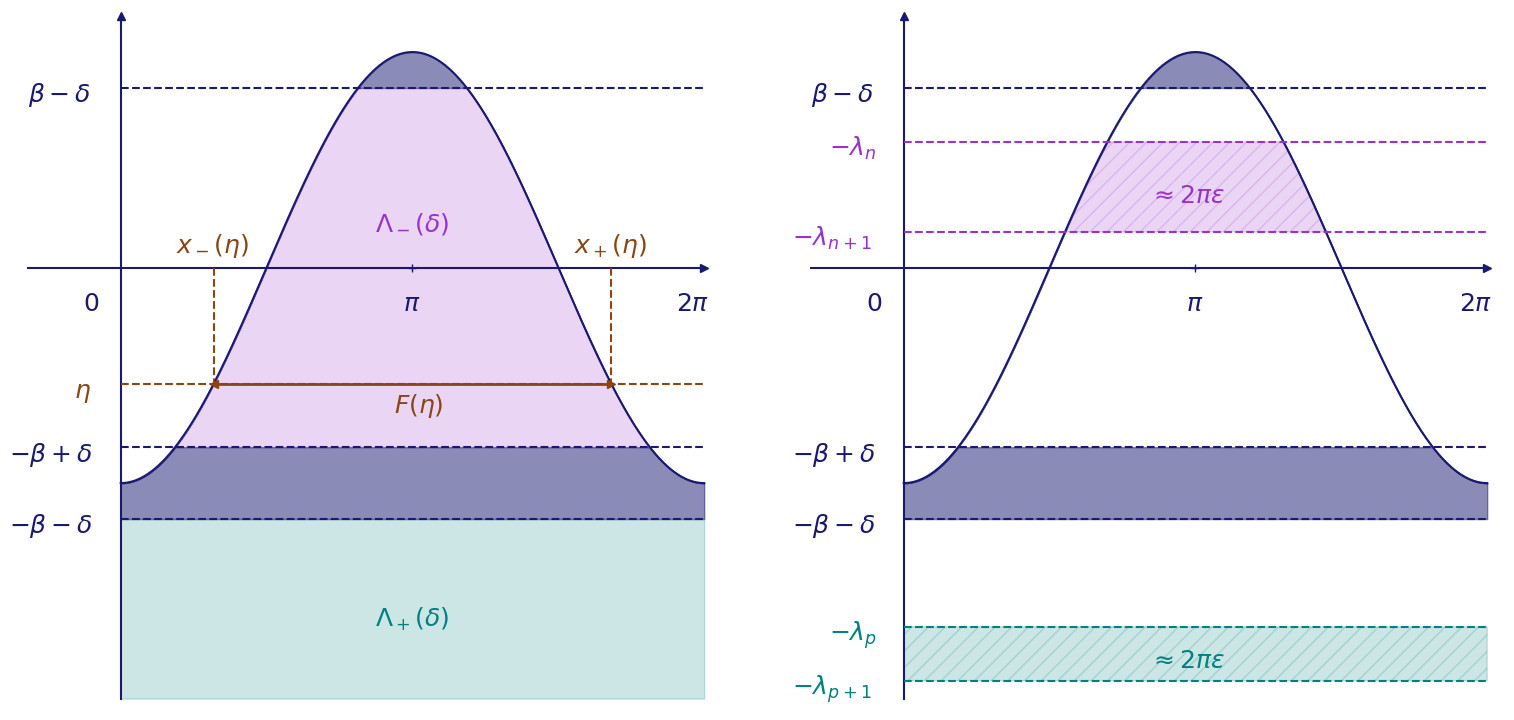}
\end{center}
\caption{Definition of $\Lambda_{\pm}(\delta)$ (on the left) and distribution of the eigenvalues in the zero-dispersion limit (on the right), with initial data $u_0(x)=-\beta\cos(x)$}\label{fig:distribution}
\end{figure}

\begin{rk}[Other eigenvalues]
Note that we need to remove two small regions $\lambda+\varepsilon\in [-\beta,-\beta+\delta)$, $\lambda+\varepsilon\in (\beta-\delta,\beta+\delta)$ and one large region $\lambda+\varepsilon\in(K(\delta),\infty)$ in our analysis, for which a uniform asymptotic expansions of the eigenvalues is not known. The reason is that we need uniform bounds in the method of stationary phase and in the Laplace method, but the stationary point goes to the limits of integration at $-\beta$, $\beta$ and $\infty$.
\end{rk}


To prove Theorem~\ref{thm:asymptotics}, we will  see that the eigenfunctions $f_n(u_0;\varepsilon)\in L^2_+(\T)$ of the Lax operator must have a holomorphic extension to the open unit disk in the complex plane. Therefore, the pole at $0$ has to satisfy some constraints in order to meet the holomorphy property, leading to an asymptotic expansion of the eigenvalues $\lambda_n(u_0;\varepsilon)$ as $\varepsilon\to 0$.

As a consequence, we will see that the initial data $u_0(x)=-\beta\cos(x)$ is well-approximated by a solution for which $\gamma_n(u_0;\varepsilon)=|\zeta_n(u_0;\varepsilon)|^2=0$ when $n\geq\frac{\beta+\gdO(1)}{\varepsilon}$, which is actually to a $N_{\varepsilon}$-soliton with $N_{\varepsilon}\approx\frac{\beta}{\varepsilon}$. This insight leads us to define approximate Lax eigenvalues for more general single well initial data from the distribution function $F$ in Definition~\ref{def:approximate_Birkhoff}. In order to derive the entire Birkhoff coordinates which completely characterize the approximate initial data $u_0^{\varepsilon}$, we needed to choose the phase constants $\theta_n(u_0^{\varepsilon};\varepsilon)$ for $n\geq 1$, but we dot not have a justification for our choice yet. When $u_0(x)=-\beta\cos(x)$, we take advantage of the fact that all the phase constants vanish.

\subsection{Strategy of proof}\label{part:strategy}

Let $u^{\varepsilon}$ be the solution to~\eqref{eq:bo} with initial data $u_0^{\varepsilon}$ satisfying the admissibility conditions from Definition~\ref{def:approximate_Birkhoff}. In this part, we explain how to obtain weak convergence of $u^{\varepsilon}$ to $u^{B}_{alt}$ when $\varepsilon\to 0$.

We first make a link between the matrix $M(u_0;\varepsilon)=(M_{n,p}(u_0;\varepsilon))_{n,p\geq 0}$ of the shift operator $S:h\in L^2_+(\T)\mapsto e^{ix}h\in L^2_+(\T)$ (defined later in~\eqref{def:M}) as a function of the spectral parameters, and the Fourier coefficients of $u_0\in L^2_{r,0}(\T)$ (see Proposition~\ref{prop:u-trM})
\[
\widehat{u_0}(k)=\varepsilon\Tr(M(u_0;\varepsilon)^k).
\]
Then, it is quite direct that for a single well potential there also holds (see Proposition~\ref{prop:formule_uk})
\[
\widehat{u_0}(k)=\frac{-i}{2k\pi}\int_{\min u_0}^{\max u_0}(e^{-i k x_+(\eta)}-e^{-i k x_-(\eta)})\d\eta,
\]
where $x_{\pm}(\lambda)$ are the antecedents of $\lambda$ by $u_0$ defined below Definition~\ref{def:single well}.
One can make a parallel between those two formulas and~\cite{MillerXu2011}, where Miller and Xu make a link between the $k$-th moment $\Tr(M(u_0;\varepsilon)^k)$ and some integral depending on $(x+2\lambda t-x_{\pm}(\lambda))^{k+1}$ (see their Proposition 4.1).%

Using the asymptotics of the eigenvalues, we justify an approximation of $\varepsilon\Tr(M(u_0^{\varepsilon};\varepsilon)^k)$ under the form
\begin{equation}\label{eq:fourier_u_epsilon}
\varepsilon\Tr(M(u_0^{\varepsilon};\varepsilon)^k)\approx  \frac{-i}{2k\pi}\int_{\min u_0}^{\max u_0}(e^{-i k x_+(\eta)}-e^{-i k x_-(\eta)})\d\eta,
\end{equation}
where the right-hand side does not depend on $\varepsilon$ but only on $u_0$. In this approximation, we neglect the terms $M_{n,p}(u_0^{\varepsilon};\varepsilon)$ which are off-diagonal $|n-p|\geq\varepsilon^{-r}$ for some $\varepsilon>0$, and the terms where the index $n$ corresponds to an eigenvalue outside the small-eigenvalue region $\lambda_n+\varepsilon\in\Lambda_-(\delta)$. Our approximation method then enables us to deduce an approximation of $\varepsilon\Tr(M(u^{\varepsilon}(t);\varepsilon)^k)$:
\[
\varepsilon\Tr(M(u^{\varepsilon}(t);\varepsilon)^k)\approx  \frac{-i}{2k\pi}\int_{\min u_0}^{\max u_0}(e^{-i k (x_+(\eta)+2\eta t)}-e^{-i k( x_-(\eta)+2\eta t)})\d\eta.
\]
For small times, we recognize the right hand side as the $k$-th Fourier coefficient for the time evolution of $u_0$ under the Burgers equation.
Indeed, for the Burgers solution $u^B$, one would have $x_{\pm}^B(\eta,t)=x_{\pm}(\eta)+2\eta t$, so that we have actually proven
\[
\widehat{u^{\varepsilon}(t)}(k)
	\approx \widehat{u^B(t)}(k).
\]
 After the breaking time for the Burgers equation, the right hand side becomes the alternate sum of Fourier coefficients for the branches of the multivalued solution of the Burgers equation obtained with the method of characteristics, denoted $u^B_{alt}$.

\subsection{Plan of the paper}

 In section~\ref{section:notation}, we introduce the Birkhoff coordinates and spectral parameters associated to~\eqref{eq:bo}. In section~\ref{part:approximation}, we use the approximate spectral data in order to approximate the $k$-th Fourier coefficient of the solution at time $t$ by the $k$-th Fourier coefficient of the relevant solution to the Burgers equation.  In section~\ref{part:Lax}, we establish an asymptotic expansion of the Lax eigenvalues associated to equation~\eqref{eq:bo} for the cosine initial data.

\paragraph{Acknowledgments} The author is grateful to Patrick Gérard for relevant advice on this problem, in particular, for providing notes on Proposition~\ref{prop:eigenvalue_equation} about the cosine function. This material is based upon work supported by the National Science Foundation under Grant No. DMS-1439786 while the author was in residence at the Institute for Computational and Experimental Research in Mathematics in Providence, RI, during the ``Hamiltonian Methods in Dispersive and Wave Evolution Equations'' program.

\section{Birkhoff coordinates in the zero dispersion limit}\label{section:notation}

In all that follows, the constants $C$ may change from line to line, and are denoted by $C(\delta)$ if they depend on a parameter $\delta$ which is not fixed. Some parameters $0<c<r<1$ are also fixed all throughout this paper.

\subsection{Lax eigenvalues for the Benjamin-Ono equation}\label{part:notation_lax}

Our main tool is the description of complete integrability for the Benjamin-Ono equation on the torus from~\cite{GerardKappeler2019}, in which Birkhoff coordinates are constructed in the case $\varepsilon=1$. Let us adapt the setting to equation~\eqref{eq:bo} with arbitrary parameter $\varepsilon$.

For $u\in L^2_{r,0}(\T)$, we denote $L_u(\varepsilon)$ the Lax operator for the Benjamin-Ono equation with parameter~$\varepsilon$ 
\[
L_u(\varepsilon)=\varepsilon D-T_u.
\]
We have written $D=-i\partial_x$, moreover, $T_u:h\in L^2_+(\T)\mapsto \Pi(uh)\in L^2_+(\T)$ is a Toeplitz operator, where $\Pi$ is the Szeg\H{o} projector onto the subspace $L^2_+(\T)$ of $L^2(\T)$ of functions with positive Fourier frequencies.

Let $(\lambda_n(u;\varepsilon))_{n\geq 0}$ be the eigenvalues of $L_u(\varepsilon)$ in increasing order. Let $(f_n(u;\varepsilon))_{n\geq 0}$ be the corresponding eigenfunctions defined through the additional conditions 
\(
\langle \un|f_0\rangle>0
\)
{ and }
\(\langle f_n|e^{ix}f_{n-1}\rangle>0\), \(n\geq 1.
\)

Then according to~\cite{GerardKappeler2019}, the gaps lengths
\[
\gamma_n(u;\varepsilon)=\lambda_n(u;\varepsilon)-\lambda_{n-1}(u;\varepsilon)-\varepsilon
\]
are nonnegative, and for any $n\geq 1$, there holds
\begin{equation*}
\lambda_n(u;\varepsilon)
	=n\varepsilon+\lambda_0(u;\varepsilon)+\sum_{k=1}^n\gamma_k(u;\varepsilon)
	=n\varepsilon-\sum_{k=n+1}^{+\infty}\gamma_k(u;\varepsilon).
\end{equation*}
Finally, defining \(
h^{1/2}=\{(\zeta_n)_{n\geq 1}\in \C^{\N}\mid \sum_{n\geq 1}n|\zeta_n|^2<\infty\},
\)
the Birkhoff transform is written
\[
\Phi^{\varepsilon}:u\in L^2_{r,0}(\T)\mapsto (\zeta_n(u;\varepsilon))_{n\geq 1}\in h^{1/2}.
\]
In the construction of this transformation, one can see that $|\zeta_n(u;\varepsilon)|^2=\gamma_n(u;\varepsilon)$. Moreover, the phase constants are defined by $\theta_n(u;\varepsilon)=\arg(\zeta_n(u;\varepsilon))$ for $\zeta_n(u;\varepsilon)\neq 0$.

The following Parseval formula holds true
\begin{equation}\label{eq:Parseval}
\|u\|_{L^2(\T)}^2=2\varepsilon\sum_{n\geq 1}n\gamma_n(u;\varepsilon).
\end{equation}
In order to check this identity for general parameter $\varepsilon$, it might be useful to note that $L_u(\varepsilon)=\varepsilon L_{u/\varepsilon}(1)$, so that $f_n(u;\varepsilon)=f_n(u/\varepsilon;1)$, then
\(
\zeta_n(u;\varepsilon)=\sqrt{\varepsilon}\zeta_n(u/\varepsilon;1),
\)
and finally $\gamma_k(u;\varepsilon)=\varepsilon\gamma_k(u/{\varepsilon};1)$. As a consequence, we can use the Parseval formula with parameter $\varepsilon=1$ from~\cite{GerardKappeler2019}. 


Finally, the eigenvalue $\lambda_n(u;\varepsilon)$ satisfies the min-max formula
\[
\lambda_n(u;\varepsilon)=\max_{\dim F=n} \min\{\langle L_u(\varepsilon)h|h\rangle\mid h\in H^1_+\cap F^{\perp},\;\|h\|_{L^2}=1\}.
\]
Therefore, the smallest eigenvalue
\begin{align*}
\lambda_0(u;\varepsilon)
	&=\min\{\langle L_u(\varepsilon)h|h\rangle\mid h\in H^1_+,\;\|h\|_{L^2}=1\}
\end{align*}
is bounded below by
\begin{equation}\label{eq:lambda0}
\lambda_0(u;0)
	=-\max\{\langle u ||h|^2\rangle\mid h\in H^1_+,\;\|h\|_{L^2}=1\}
	\geq -\max_{x\in\T}u(x).
\end{equation}
\subsection{Inversion formula}\label{part:inversion_formula}

Given $u\in L^2_{r,0}(\T)$, one has the inversion formula~\cite{GerardKappeler2019}
\begin{equation}\label{eq:inversionM}
\Pi u(z)=\left\langle (\operatorname{Id}-zM(u;\varepsilon))^{-1}X(u;\varepsilon)|Y(u;\varepsilon)\right\rangle
\end{equation}
with
\[
X(u;\varepsilon)=(-\lambda_p(u;\varepsilon)\langle \un|f_p(u;\varepsilon)\rangle)_{p\geq 0},
\quad
Y(u;\varepsilon)=(\langle \un|f_n(u;\varepsilon)\rangle)_{n\geq 0},
\]
and $M(u;\varepsilon)$ is the matrix of the shift operator $S:u\in L^2_+\mapsto e^{ix}u\in L^2_+$ with coefficients
\[M_{n,p}(u;\varepsilon)=\langle f_p(u;\varepsilon)|Sf_n(u;\varepsilon)\rangle,
\]
\begin{equation}\label{def:M}
M_{n,p}(u;\varepsilon)
	=\begin{cases}
\sqrt{\mu_{n+1}(u;\varepsilon)}\frac{\sqrt{\kappa_p(u;\varepsilon)}}{\sqrt{\kappa_{n+1}(u;\varepsilon)}}\overline{\zeta_p(u;\varepsilon)}\zeta_{n+1}(u;\varepsilon)\frac{1}{\lambda_p(u;\varepsilon)-\lambda_n(u;\varepsilon)-\varepsilon}
& \text{ if } p\neq n+1
\\
\sqrt{\mu_{n+1}(u;\varepsilon)}
& \text{ if } p= n+1.
\end{cases}
\end{equation}
In this definition, $\mu_n$ and $\kappa_n$ are functions of the eigenvalues $(\lambda_n(u;\varepsilon))_n$ defined in~\eqref{eq:kappa_n} and in~\eqref{eq:mu_n}.

In order to get an asymptotic expansion of the eigenvalues, the strategy used in Miller, Wetzel~\cite{MillerWetzel2016,MillerWetzel2016rational}    is restricted to $N$-gap solutions. On the torus, when $u$ satisfies $\zeta_n(u;\varepsilon)=0$ for every $n\geq N$ ($u$ is a $N$-gap for the parameter $\varepsilon$), the inversion formula becomes
\[
\Pi u(z)=-2\varepsilon z\partial_z   \log  \det \left(\operatorname{Id} -zM_{N-1}(u;\varepsilon)\right).
\]
Recall that a function in the Hardy space $f\in L^2_+(\T)$ admits an holomorphic expansion to the open unit disk $\D$ in $\C$ as follows. Expand $f$ in Fourier modes
\(
f(x)=\sum_{n\geq 0}c_ne^{inx},
\)
then the holomorphic expansion of $f$ is written
\(
f(z)=\sum_{n\geq 0}c_nz^n.
\)
Conversely, let $z\in\overline{\mathbb{D}}\subset\C$ and $Q(z)=\prod_{j=1}^{N}(1-\overline{q_j}z)$ with $q_j\in\C$ and $0<|q_j|<1$, then
\[
\Pi u(z)=\varepsilon\sum_{j=1}^{N}\frac{\overline{q_j}z}{1-\overline{q_j}z}
\]
defines a $N$-gap for the parameter $\varepsilon$ and $Q(z)=\det (\operatorname{Id} -zM_{N-1}(u;\varepsilon))$.
Such a $N$-gap $u$ has a meromorphic extension on $\C$
\[
u(z)=\varepsilon\sum_{j=1}^{N}\frac{\overline{q_j}z}{1-\overline{q_j}z}+\varepsilon\sum_{j=1}^{N}\frac{q_j}{z-q_j},
\]
with poles $q_j$ inside the unit disk and $1/\overline{q_j}$ outside of the unit disk. One difficulty of this approach on the torus is the following observation. If we replace $\varepsilon$ by $\varepsilon/2$, then
\[
\Pi u(z)=-\frac{\varepsilon}{2}\sum_{j=0}^{2N-1}\frac{q_jz}{1-q_jz}
\]
with $q_{j+N}=q_j$, and $u$ becomes a $2N$-gap for the parameter $\varepsilon/2$. As a consequence, we expect that a $N$-gap for the parameter $\varepsilon$ becomes a $N/\varepsilon$-gap for the parameter $\varepsilon$. The number of poles and zeroes of the meromorphic extension of $u$ being increasing with $\varepsilon$, we do not expect to get a uniform approximation of the eigenvalues when using the steepest descent method as $\varepsilon\to 0$.


Instead of using this approach, we rather consider the trigonometric polynomial $-\beta\cos$, which meromorphic extension to the complex plane $\frac{z+z^{-1}}{2}$ has a pole at the origin $z=0$ but nowhere else. This pole could have a higher order if one considers more general trigonometric polynomials, but this order does not depend on $\varepsilon$, which makes us hope to be able to extend our result to trigonometric polynomials in the future.

\subsection{Approximate Birkhoff coordinates}
In this part, we justify that if Theorem~\ref{thm:asymptotics} is true for $u_0(x)=-\beta\cos(x)$, then $(u_0)_{\varepsilon>0}$ defines an admissible family of initial data in the sense of Definition~\ref{def:approximate_Birkhoff}. Then we state some consequences of Definition~\ref{def:approximate_Birkhoff}.

Concerning the cosine function, it is enough to establish that all phase constants are  equal to $0$. 

\begin{prop}[Phase constants for the cosine function]\label{prop:cos} Fix a parameter $\beta>0$ and consider the initial data $u(x)=-\beta\cos(x)$. For every $n\geq 1$, there holds $\zeta_n(u;\varepsilon)>0$, hence $\theta_n(u;\varepsilon)=0$.
\end{prop}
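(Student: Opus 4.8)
The plan is to use the defining sign conditions on the eigenfunctions together with an explicit structural property of the Lax operator for the potential $u(x) = -\beta\cos(x) = -\frac{\beta}{2}(e^{ix}+e^{-ix})$, namely that $T_u$ only couples adjacent Fourier modes. Concretely, writing $f_n(u;\varepsilon) = \sum_{k\geq 0} c_k^{(n)} e^{ikx}$ in $L^2_+(\T)$, the eigenvalue equation $L_u(\varepsilon) f_n = \lambda_n f_n$, i.e. $\varepsilon D f_n - \Pi(u f_n) = \lambda_n f_n$, becomes a three-term recursion
\[
\varepsilon k\, c_k^{(n)} + \frac{\beta}{2}\left(c_{k-1}^{(n)} + c_{k+1}^{(n)}\right) = \lambda_n\, c_k^{(n)},
\]
for $k\geq 1$, together with the boundary relation at $k=0$, $\frac{\beta}{2} c_1^{(n)} = \lambda_n c_0^{(n)}$ (there is no $c_{-1}$ term because of the Szegő projection). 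First I would record that this is exactly the kind of Jacobi-type recursion whose solution space is one-dimensional once $c_0^{(n)}$ is fixed, and that the requirement $f_n \in L^2_+(\T)$ (equivalently, that the generating function $f_n(z) = \sum_k c_k^{(n)} z^k$ is holomorphic on $\D$, hence the $\ell^2$ decay of the $c_k^{(n)}$) selects $\lambda_n$ from a discrete set — this is the content the author attributes to Gérard's notes and presumably to Proposition~\ref{prop:eigenvalue_equation}.

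The key observation is a sign/reality statement: because all coefficients $\varepsilon k$, $\tfrac{\beta}{2}$ in the recursion are real, and $\lambda_n$ is real, one can choose the eigenfunction $f_n$ to have \emph{real} Fourier coefficients $c_k^{(n)}$. I would then analyze the signs of these coefficients. The normalization $\langle \un \mid f_0\rangle = c_0^{(0)} > 0$ fixes the sign for $n=0$; for the recursion with the smallest eigenvalue $\lambda_0$ (which by~\eqref{eq:lambda0} lies below $-\max u = -\beta$, so $\lambda_0 < 0$), the three-term recursion forces the $c_k^{(0)}$ to have alternating signs, or rather — after the standard substitution $c_k^{(n)} = (-1)^k d_k^{(n)}$ which flips the sign of the $\tfrac{\beta}{2}$ coupling and turns the recursion into one with a positive off-diagonal — one gets a Jacobi matrix with positive off-diagonal entries, whose ground state (Perron–Frobenius) has strictly positive entries $d_k^{(n)}$. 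This is the combinatorial heart of the argument. Then $\zeta_{n}(u;\varepsilon)$, which enters $M_{n-1,n} = \sqrt{\mu_n}$ and more importantly appears in $M_{n-1,p}$ through the product $\overline{\zeta_p}\,\zeta_n$, can be read off from the formula~\eqref{def:M}: one computes $\langle f_n \mid e^{ix} f_{n-1}\rangle = \sum_k \overline{c_{k+1}^{(n)}} c_k^{(n-1)}$ and, since all the $d_k^{(n)}$ are positive (uniformly in $n$, after the Perron–Frobenius/oscillation-theory argument for excited states) while the $(-1)^k$ factors cancel in $\overline{c_{k+1}^{(n)}} c_{k}^{(n-1)}$ up to an overall sign, this inner product has a definite sign; the normalization convention $\langle f_n \mid e^{ix} f_{n-1}\rangle > 0$ then pins it down, and one concludes that $\zeta_n(u;\varepsilon)$ is a positive multiple of something real and positive, hence $\zeta_n(u;\varepsilon) > 0$ and $\theta_n(u;\varepsilon) = \arg \zeta_n(u;\varepsilon) = 0$.

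The main obstacle I anticipate is the step for excited states ($n\geq 1$): Perron–Frobenius immediately gives positivity of the ground-state coefficients after the sign twist, but for higher eigenfunctions $f_n$ one instead has oscillation theory — $f_n$ (in the twisted variables $d_k^{(n)}$) changes sign exactly $n$ times — so the coefficients are not all one sign, and one must instead track the relative signs of consecutive eigenfunctions $f_{n-1}$ and $f_n$ carefully enough to evaluate $\mathrm{sgn}\,\langle f_n \mid e^{ix} f_{n-1}\rangle$. The clean way to handle this, which I would pursue, is to avoid sign-chasing of individual coefficients and instead argue directly that $\langle f_n \mid e^{ix} f_{n-1}\rangle$ is real (from reality of all $c_k^{(j)}$) and nonzero (from the structure of $M$, since $\zeta_n = 0$ would force a reducible block and contradict simplicity of the spectrum / the fact established elsewhere that for $-\beta\cos$ all gaps are open — indeed the author notes $-\beta\cos$ is not a finite-gap potential), and then the imposed normalization $\langle f_n\mid e^{ix}f_{n-1}\rangle>0$ combined with the relation $|\zeta_n|^2 = \gamma_n$ and the explicit phase relation in~\eqref{def:M} yields $\zeta_n > 0$ directly. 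In short: reality of the recursion gives reality of $\zeta_n$; non-degeneracy of every gap gives $\zeta_n\neq 0$; the sign normalization gives $\zeta_n>0$; hence $\theta_n = 0$.
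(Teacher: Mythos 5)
Your setup (the three-term recursion $\varepsilon k\,c_k+\tfrac{\beta}{2}(c_{k-1}+c_{k+1})=\lambda c_k$, $\tfrac{\beta}{2}c_1=\lambda c_0$, and the observation that reality of its coefficients lets one take all Fourier coefficients of the normalized $f_n$ real, so that $\zeta_n\in\R$) is sound, and it even gives you for free something you argue less convincingly elsewhere: if $c_0^{(n)}=0$ then $c_1^{(n)}=0$ and the recursion kills $f_n$ entirely, so $\langle \un|f_n\rangle\neq 0$ and hence $\gamma_n=\kappa_n|\langle\un|f_n\rangle|^2\neq 0$ for every $n$. (By contrast, your appeal to ``$-\beta\cos$ is not a finite-gap potential'' does not suffice — not finite-gap only means infinitely many open gaps, not that every gap is open — and $\gamma_n=0$ does not contradict simplicity of the spectrum of $L_u$, since it only means $\lambda_n=\lambda_{n-1}+\varepsilon$.) The genuine gap is the last step: reality of $\zeta_n$, non-vanishing, and the normalization $\langle f_n|Sf_{n-1}\rangle>0$ do \emph{not} together imply $\zeta_n>0$. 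The normalization fixes the overall sign of $f_n$, but the sign of $\zeta_n$ is the sign of $\langle\un|f_n\rangle=c$ (up to the positive factor $\kappa_n^{-1/2}$), i.e.\ of the zeroth coefficient of that normalized eigenfunction, and nothing in~\eqref{def:M} pins it down: formula~\eqref{def:M} holds for arbitrary potentials, for which the $\zeta_n$ have arbitrary phases, so it cannot by itself produce positivity. What is missing is a relation tying $\langle f_n|Sf_{n-1}\rangle$ to the product $\langle f_n|\un\rangle\langle\un|f_{n-1}\rangle$ with a sign you control; your first route (Perron--Frobenius after the $(-1)^k$ twist plus oscillation theory) would have to supply exactly this and, as you concede, stalls for the excited states, while your fallback simply asserts the conclusion.

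The paper closes precisely this gap with an algebraic identity rather than sign-chasing: from Lemma 2.6 of Gérard--Kappeler, $(\lambda_p-\lambda_n-1)\langle f_p|Sf_n\rangle=-\langle f_p|\un\rangle\langle u|Sf_n\rangle$, and for $u=-\beta\cos$ the key point is $\langle u|Sf_n\rangle=-\tfrac{\beta}{2}\langle\un|f_n\rangle$ (only the $e^{ix}$ mode of $u$ pairs with $Sf_n\in L^2_+(\T)$). Taking $p=n+1$ gives
\[
\gamma_{n+1}\,\langle f_{n+1}|Sf_n\rangle=\frac{\beta}{2}\,\langle f_{n+1}|\un\rangle\,\langle\un|f_n\rangle,
\]
and since $\gamma_{n+1}>0$ (all gaps open, which the paper proves by an all-or-nothing argument and you can get from the recursion as above) and $\langle f_{n+1}|Sf_n\rangle>0$ by normalization, one gets $\overline{\zeta_{n+1}}\zeta_n>0$ and concludes by induction from $n=0$, where $\langle\un|f_0\rangle>0$ by definition. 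If you want to stay inside your Fourier-coefficient framework, you would have to reprove this identity by hand (a discrete Wronskian-type computation combining the recursions for $c^{(n)}$ and $c^{(n+1)}$ inside $\sum_k \overline{c^{(n+1)}_{k+1}}c^{(n)}_k$); without that, or an equivalent substitute such as a small-$\beta$ perturbative sign computation propagated by continuity and non-vanishing, the positivity of $\zeta_n$ is not established.
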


\begin{proof}
It is enough to tackle the case $\varepsilon=1$. From~\cite{GerardKappeler2019}, equation~(2.7), one has for every $n\geq 0$
\[
L_{u}Sf_n=SL_{u}f_n+Sf_n-\langle uSf_n|\un\rangle\un.
\]
For the potential $u(x)=-\beta\cos(x)$, since $Sf_n\in L^2_+(\T)$, there holds 
\[\langle u|Sf_n\rangle=-\frac{\beta}{2}\langle e^{ix}|Sf_n\rangle=-\frac{\beta}{2}\langle \un|f_n\rangle,
\]
so that
\[
L_{u}Sf_n=(\lambda_n+1)Sf_n+\frac{\beta}{2}\langle f_n|\un\rangle\un.
\]
Note that $\gamma_n=\kappa_n|\langle \un|f_n\rangle|^2$, where $\kappa_n$ is nonzero and defined in~\eqref{eq:kappa_n}.
If $\gamma_n=0$ for some $n\geq 1$, then $L_{u}Sf_n=(\lambda_n+1)Sf_n$, which implies that $f_{n+1}=Sf_n$, $\lambda_{n+1}=\lambda_n+1$ and $\gamma_{n+1}=0$. Conversely, if $\gamma_{n+1}=0$, then Lemma 2.5 from~\cite{GerardKappeler2019} implies that $L_{u}Sf_n=\lambda_{n+1}Sf_n=(\lambda_n+1)Sf_n$, consequently, $\langle \un|f_n\rangle=0$ and $\gamma_n=0$. We conclude that either all the Birkhoff coordinates of $u$ are zero, which is impossible, either all the Birkhoff coordinates of $u$ are nonzero.

Finally, from~\cite{GerardKappeler2019}, Lemma 2.6, one has for every $n,p\geq 0$
\begin{equation*}\label{eq:GK_fn}
(\lambda_{p}-\lambda_n-1)\langle f_{p}|Sf_n\rangle=-\langle f_{p}|\un\rangle\langle u|Sf_n\rangle.
\end{equation*}
When $p=n+1$, we get
\[
\gamma_{n+1}\langle f_{n+1}|Sf_n\rangle=\frac{\beta}{2}\langle f_{n+1}|\un\rangle \langle \un|f_n\rangle.
\]
Since $\gamma_{n+1}>0$ and $\langle f_{n+1}|Sf_n\rangle>0$ by definition of the eigenfunctions $f_n$, we deduce that for every $n\geq 0$, there holds
\[
\overline{\zeta_{n+1}}\zeta_n>0.
\]
Using that $\zeta_0=1$, we conclude that the Birkhoff coordinates of $u$ are all real and positive.
\end{proof}

The following result summarizes the properties of the approximate Birkhoff coordinates both in the case $u_0(x)=-\beta\cos(x)$ and in the case of a general single well potential.

\begin{cor}[Consequences of Definition~\ref{def:approximate_Birkhoff}]\label{cor:asymptotics}
Let $u_0$ be a single well initial data. We choose an admissible family $(u_0^{\varepsilon})_{\varepsilon>0}$ as in Definition~\ref{def:approximate_Birkhoff}. We consider the approximate Lax eigenvalues $\lambda_n=\lambda_n(u_0^{\varepsilon};\varepsilon)$. 
 Let $\delta>0$, then for $\varepsilon\leq \varepsilon_0(\delta)$ small enough the following holds.
\begin{enumerate}
\item For every $n\geq 0$, we have
\begin{equation}\label{eq:asymptotics}
\left|\int_{-\lambda_n}^{\max(u_0)}F(\eta)\d\eta-n\varepsilon\right|
	\leq C\delta+C(\delta)\varepsilon\sqrt{\varepsilon}.
\end{equation}
\item (Large eigenvalues) If $-\lambda_n-\varepsilon\in\Lambda_-(\delta)=[-\max(u_0)+\delta,-\min(u_0)-\delta]$, then there holds
\[
\sum_{k= n+1}^{\infty}\gamma_k\leq C\delta+ C(\delta)\varepsilon\sqrt{\varepsilon}.
\]

\item (Two-region eigenvalues) 
If $-\lambda_n-\varepsilon\in\Lambda_-(\delta)$ and $-\lambda_p-\varepsilon\in\Lambda_+(\delta)=[-\min(u_0)+\delta,\infty)$, then $|p-n|\geq \frac{\delta}{C\varepsilon}$.
\end{enumerate}
\end{cor}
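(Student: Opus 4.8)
All three assertions rest on three ingredients: the quantization rules in items~1--3 of Definition~\ref{def:approximate_Birkhoff}; the identity $\sum_{k\geq n+1}\gamma_k=n\varepsilon-\lambda_n$, read off from the two expressions for $\lambda_n$ recalled in Section~\ref{part:notation_lax}; and the elementary facts that $F$ is nonincreasing, equals $1$ on $(-\infty,\min u_0]$, equals $0$ on $[\max u_0,+\infty)$, and satisfies $\int_{\min u_0}^{\max u_0}F=-\min u_0$ (because $u_0$ has zero mean). Set $G(\eta):=\int_{\eta}^{\max u_0}F$ for $\eta\geq\min u_0$ and $G(\eta):=-\eta$ for $\eta\leq\min u_0$; then $G$ is continuous, nonincreasing, $G(\max u_0)=0$, and $G(\eta)+\eta=\int_{\min u_0}^{\max(\eta,\min u_0)}(1-F)\in[0,(\eta-\min u_0)_+]$. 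With this notation $\int_{-\lambda_n}^{\max u_0}F=G(-\lambda_n)$ for every $n$, so \eqref{eq:asymptotics} is exactly $|G(-\lambda_n)-n\varepsilon|\leq C\delta+C(\delta)\varepsilon\sqrt\varepsilon$.

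\textbf{Assertion (1).} Partition the indices according to where $\lambda_n+\varepsilon$ (which is $\geq-\max u_0$ by~\eqref{eq:lambda0}) falls: (i) $[-\max u_0,-\max u_0+\delta]$; (ii) $\Lambda_-(\delta)$; (iii) $[-\min u_0-\delta,-\min u_0+\delta]$; (iv) $[-\min u_0+\delta,K(\delta)]$; (v) $[K(\delta),+\infty)$. For type~(i): $-\lambda_n\geq\max u_0-\delta$ gives $0\leq G(-\lambda_n)\leq\delta$, while item~3 gives $n\varepsilon\leq C\delta$, so $|G(-\lambda_n)-n\varepsilon|\leq C\delta$. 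For the remaining indices, fix a reference $n_0$: the smallest index with $\lambda_{n_0}+\varepsilon\in\Lambda_-(\delta)$ (if $\Lambda_-(\delta)$ carries $o(1/\varepsilon)$ eigenvalues, the bulk contributes negligibly and the claim reduces to types~(i) and~(iii), so assume otherwise). By the type~(i) count $n_0\varepsilon\leq C\delta$, and --- granting that the spectral gap entering $\Lambda_-(\delta)$ is $O(\delta)$, the one subtle point, discussed below --- also $G(-\lambda_{n_0})\leq C\delta$. Then for any later $n$ I write $G(-\lambda_n)-n\varepsilon=(G(-\lambda_{n_0})-n_0\varepsilon)+\big(\int_{-\lambda_n}^{-\lambda_{n_0}}F-(n-n_0)\varepsilon\big)$ and estimate the last bracket by splitting the integration range at the images of the remaining region boundaries: on the bulk stretch item~1 is applied \emph{once} to an endpoint pair (error $C(\delta)\varepsilon\sqrt\varepsilon$); on the short stretch~(iii) both $\int F$ and the corresponding $(\Delta n)\varepsilon$ are $\leq C\delta$ (width $\leq 2\delta$, $F\leq1$; count $\leq C\delta/\varepsilon$ by item~3); on~(iv)--(v) one has $F\equiv1$, so $\int_{-\lambda_p}^{-\lambda_n}F=\lambda_p-\lambda_n$, which item~2 matches to $(p-n)\varepsilon$ with error $C(\delta)\varepsilon\sqrt\varepsilon$ on~(iv) and $\leq\delta$ on~(v). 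The crucial feature is that items~1--2 bound the mismatch for \emph{arbitrary} pairs in a region at once, so no error accumulates over the $O(1/\varepsilon)$ eigenvalues. Summing, $|G(-\lambda_n)-n\varepsilon|\leq C\delta+C(\delta)\varepsilon\sqrt\varepsilon$ for all $n$.

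\textbf{Assertions (2) and (3).} For~(2): for a large eigenvalue, $-\lambda_n$ lies below $\min u_0$ up to an $O(\delta)$ margin, so $0\leq G(-\lambda_n)-\lambda_n=(G(-\lambda_n)+(-\lambda_n))\leq C\delta$; combining with~(1),
\[
\sum_{k\geq n+1}\gamma_k=n\varepsilon-\lambda_n=(n\varepsilon-G(-\lambda_n))+(G(-\lambda_n)-\lambda_n)\leq C\delta+C(\delta)\varepsilon\sqrt\varepsilon .
\]
For~(3): when $\lambda_n+\varepsilon\in\Lambda_-(\delta)$ and $\lambda_p+\varepsilon\in\Lambda_+(\delta)$ one has $\lambda_p-\lambda_n\geq2\delta$; more usefully, every eigenvalue $\lambda_k$ with $\lambda_k+\varepsilon\in[-\min u_0-\delta,-\min u_0+\delta)$ satisfies $n\leq k\leq p$, and by item~3 there are at least $\delta/(C\varepsilon)$ of them, whence $|p-n|\geq\delta/(C\varepsilon)$.

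\textbf{Main obstacle.} The bookkeeping in~(1) is routine save for one genuine point: the reference index $n_0$ at the left edge of $\Lambda_-(\delta)$ (and, symmetrically, at its right edge) must satisfy $G(-\lambda_{n_0})\leq C\delta$, i.e.\ the spectral gap through $-\max u_0+\delta$ must be $O(\delta)$ --- indeed, for admissible data, because $F$ vanishes like a square root at the extrema of $u_0$, these edge gaps are $O(\varepsilon/\sqrt\delta)$. This smallness does not follow from the quantization in item~1 (which only controls pairs strictly inside $\Lambda_-(\delta)$); it uses the lower eigenvalue count in item~3, which rules out a macroscopic spectral gap near $-\max u_0$, together with Parseval's identity~\eqref{eq:Parseval}. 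Everything else is elementary, the only structural input being the eigenvalue distribution of Definition~\ref{def:approximate_Birkhoff}, which Theorem~\ref{thm:asymptotics} establishes for the cosine datum.
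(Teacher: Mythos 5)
The paper states this corollary without any written proof, so what matters is whether your argument actually extracts it from Definition~\ref{def:approximate_Birkhoff}. Your bookkeeping skeleton is fine: the region decomposition, the fact that items~1--2 of the definition control whole regions at once so no error accumulates, the reduction of assertion~(2) to assertion~(1) via $G(-\lambda_n)=\lambda_n$ for $-\lambda_n\leq\min u_0$, and the counting argument for assertion~(3) are all correct. The genuine gap is exactly the one you flag and then do not close: the anchor $G(-\lambda_{n_0})\leq C\delta$ at the left edge of $\Lambda_-(\delta)$, and equally the size of the jumps across the window $[-\min u_0-\delta,-\min u_0+\delta]$ and across $K(\delta)$ (your claim that the stretch~(iii) has ``width $\leq 2\delta$'' is unjustified for the same reason). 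None of the reasons you offer works. The lower count in item~3 only says that many eigenvalues sit inside $[-\max u_0,-\max u_0+\delta]$; it gives no bound whatsoever on the single gap $\gamma_{n_0}$ carrying the spectrum from that cluster into $\Lambda_-(\delta)$, which items~1--3 at level $\delta$ leave completely free. The square-root vanishing of $F$ at the extrema, hence the $O(\varepsilon/\sqrt\delta)$ edge gaps, is a property of the cosine datum (Theorem~\ref{thm:asymptotics}) and of the explicit construction in Lemma~\ref{lem:exist_admissible}, not a consequence of Definition~\ref{def:approximate_Birkhoff}, so it cannot be invoked when proving a corollary of that definition for an arbitrary admissible family. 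And Parseval does not pin the edge either: a macroscopic gap at an index $n_0\leq C\delta/\varepsilon$ changes $2\varepsilon\sum_n n\gamma_n$ only at order $\delta$, and any discrepancy can be compensated by arbitrarily small gaps placed at huge indices without violating any item of the definition.

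The anchor can nonetheless be proved, but by a global budget argument rather than a local one. Let $n_0$ and $n_+$ be the first and last indices with $\lambda+\varepsilon\in\Lambda_-(\delta)$, $N_-=n_+-n_0+1$, and let $q$ be the smallest index with $\lambda_q+\varepsilon\geq-\min u_0-\delta$, which exists by the lower count in item~3. Since all gaps are nonnegative, $\lambda_q\leq q\varepsilon$, hence $q\varepsilon\geq-\min u_0-\delta-\varepsilon$. On the other hand every index below $q$ lies either in $[-\max u_0,-\max u_0+\delta)$ (at most $C\delta/\varepsilon$ of them, which also gives $n_0\varepsilon\leq C\delta$) or in $\Lambda_-(\delta)$, and item~1 of the definition applied to the pair $(n_0,n_+)$ together with $G\leq-\min u_0$ yields $(N_--1)\varepsilon\leq-\min u_0-G(-\lambda_{n_0})+C(\delta)\varepsilon\sqrt{\varepsilon}$. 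Combining $-\min u_0-\delta-\varepsilon\leq q\varepsilon\leq C\delta+N_-\varepsilon$ with this bound forces $G(-\lambda_{n_0})\leq C\delta+C(\delta)\varepsilon\sqrt{\varepsilon}$, which is your missing estimate; the same device controls the crossing into $\Lambda_+(\delta)$ and gives assertion~(2) directly. With this inserted, your proof goes through; as written, the step you call the main obstacle is asserted, and the justification sketched for it is not valid.
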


We also state the bounds that we will use on the distribution function.
\begin{lem}[Lipschitz properties of the distribution function]\label{lem:F} Fix a general single well initial data. Let $\delta>0$. There exists $C(\delta)>0$ such that $F$, $x_+$ and $x_-$ are $C(\delta)$-Lipschitz on $[\min(u_0)+\delta,\max(u_0)-\delta]$. Moreover, $F(\eta)\geq 1/C(\delta)$ for $\eta\leq \max(u_0)-\delta$. 
\end{lem}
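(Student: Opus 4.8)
The plan is to prove Lemma~\ref{lem:F} by directly analyzing the distribution function $F(\eta)=\frac{1}{2\pi}\operatorname{Leb}(x\in[0,2\pi]\mid u_0(x)\geq\eta)$ in terms of the two branches $x_-$ and $x_+$ of the inverse of $u_0$. Since $u_0$ is a single well potential, for $\eta\in(\min u_0,\max u_0)$ the superlevel set $\{u_0\geq\eta\}$ is precisely the complement in $[0,2\pi]$ of the interval $(x_-(\eta),x_+(\eta))$ where $u_0<\eta$ (using the monotonicity of $u_0$ on $(0,x_{\max})$ and $(x_{\max},2\pi)$ from condition~(3) of Definition~\ref{def:single well}, together with periodicity placing the minimum at $0$). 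Hence $2\pi F(\eta)=2\pi-(x_+(\eta)-x_-(\eta))$, which immediately reduces the Lipschitz claim for $F$ to the Lipschitz claims for $x_-$ and $x_+$.

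First I would establish the Lipschitz bounds for $x_\pm$. On the interval $[\min(u_0)+\delta,\max(u_0)-\delta]$, I claim $u_0'$ is bounded below in absolute value on the preimages: more precisely, I want $|u_0'(x_-(\eta))|\geq 1/C(\delta)$ and $|u_0'(x_+(\eta))|\geq 1/C(\delta)$. The subtlety is that $u_0'$ vanishes only at $x=0$, $x=x_{\max}$, $x=2\pi$ (the critical points), and $x_-(\eta),x_+(\eta)$ stay away from these as long as $\eta$ stays in a closed subinterval of $(\min u_0,\max u_0)$; by continuity and compactness, $\eta\mapsto x_\pm(\eta)$ maps $[\min(u_0)+\delta,\max(u_0)-\delta]$ into a compact subset of $(0,x_{\max})$ (resp.\ $(x_{\max},2\pi)$) on which $u_0'$ does not vanish, so $|u_0'|$ attains a positive minimum $m(\delta)$ there. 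Then the inverse function theorem (or a direct mean value argument on the strictly monotone branch) gives that $x_\pm$ is differentiable with $|x_\pm'(\eta)|=1/|u_0'(x_\pm(\eta))|\leq 1/m(\delta)$, hence $C(\delta)$-Lipschitz with $C(\delta)=1/m(\delta)$. Combining, $F$ is $\frac{2}{2\pi m(\delta)}$-Lipschitz, which is $C(\delta)$-Lipschitz up to renaming the constant.

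For the lower bound $F(\eta)\geq 1/C(\delta)$ when $\eta\leq\max(u_0)-\delta$: the worst case is $\eta$ close to $\max(u_0)$, where the superlevel set shrinks. Near $x_{\max}$, Taylor expansion gives $u_0(x)=\max(u_0)+\frac{1}{2}u_0''(x_{\max})(x-x_{\max})^2+o((x-x_{\max})^2)$ with $u_0''(x_{\max})<0$ (it is a strict max and $x_{\max}$ is not an inflection point since the inflection points $\xi_\pm$ are distinct from $x_{\max}$ by condition~(4)); hence for $\eta=\max(u_0)-\delta$, the set $\{u_0\geq\eta\}$ contains an interval around $x_{\max}$ of length $\gtrsim\sqrt{\delta}$, so $F(\eta)\gtrsim\sqrt{\delta}\gtrsim 1/C(\delta)$ for $\delta\leq\delta_0$, and for $\eta\leq\max(u_0)-\delta_0$ a compactness argument gives a uniform positive lower bound; taking $C(\delta)$ large enough covers both ranges. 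I expect the main obstacle to be the bookkeeping around the endpoints — precisely quantifying how far $x_\pm(\eta)$ stays from the critical points $0,x_{\max},2\pi$ in terms of $\delta$, and handling the degenerate-looking but harmless fact that near $\eta=\max(u_0)$ the map $x_\pm$ has a square-root singularity rather than a Lipschitz one (which is exactly why the statement restricts to $\eta\leq\max(u_0)-\delta$). Everything else is standard real analysis: monotone inverse functions, the inverse function theorem, and compactness.
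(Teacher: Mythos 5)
The paper states Lemma~\ref{lem:F} without proof, and your strategy — reduce everything to the two monotone branches $x_\pm$, bound $|u_0'|$ from below on their image by compactness, and apply the inverse function theorem — is exactly the standard argument that is intended, and it works. There is, however, one concrete error to fix: with the paper's conventions the superlevel set is not the complement of $(x_-(\eta),x_+(\eta))$ but that interval itself. Since $u_0(0)=u_0(2\pi)=\min(u_0)$, $u_0$ increases on $(0,x_{\max})$ and decreases on $(x_{\max},2\pi)$, the set $\{x\in[0,2\pi]\mid u_0(x)\geq\eta\}$ equals $[x_-(\eta),x_+(\eta)]$ (it contains $x_{\max}$), so $2\pi F(\eta)=x_+(\eta)-x_-(\eta)$; this is also the identity the paper itself uses in the proof of Theorem~\ref{thm:link_burgers}. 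Your displayed reduction $2\pi F(\eta)=2\pi-(x_+(\eta)-x_-(\eta))$ is therefore backwards. The slip is inert in the end: the Lipschitz bound for $F$ follows from those for $x_\pm$ either way, and your lower-bound discussion (the superlevel set shrinking to a short interval around $x_{\max}$ as $\eta\to\max(u_0)$) describes the behavior of the correct $F$, not of the formula you wrote — but the identity should be corrected.

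Two smaller points. The claim $u_0''(x_{\max})<0$ is not literally guaranteed by condition (4) of Definition~\ref{def:single well}, which constrains $u_0''$ only at the inflection points $\xi_\pm$; in any case you do not need it, nor the $\sqrt{\delta}$ rate: $F$ is nonincreasing, so $F(\eta)\geq F(\max(u_0)-\delta)$ for all $\eta\leq\max(u_0)-\delta$, and $F(\max(u_0)-\delta)>0$ because by continuity $u_0>\max(u_0)-\delta$ on a neighborhood of $x_{\max}$; this yields the bound $F(\eta)\geq 1/C(\delta)$ directly. The Lipschitz part of your argument (the image of $[\min(u_0)+\delta,\max(u_0)-\delta]$ under $x_-$, resp. $x_+$, is a compact subset of $(0,x_{\max})$, resp. $(x_{\max},2\pi)$, on which $|u_0'|\geq m(\delta)>0$, hence $|x_\pm'|\leq 1/m(\delta)$ and $F$ is Lipschitz with a comparable constant) is correct as written.
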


\section{Asymptotic expansion of the Fourier coefficients for single well initial data}\label{part:approximation}

In this section, we establish Theorem~\ref{thm:main} by proving the convergence of every Fourier coefficient of $u^{\varepsilon}(t)$ to the Fourier coefficient of $u_{alt}^B(t)$.

\subsection{Fourier coefficients as a trace}

\begin{prop}[Fourier coefficients and trace of the shift matrix]\label{prop:u-trM}
For any $u\in L^2_{r,0}(\T)$ and $k\geq 1$, there holds
\[
\widehat{u}(k)
	=\varepsilon \Tr(M(u;\varepsilon)^k).
\]
\end{prop}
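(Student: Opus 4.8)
The plan is to express the Fourier coefficient $\widehat{u}(k)$ via the inversion formula~\eqref{eq:inversionM} and then recognize the resulting power series in $z$ as a generating function for traces of powers of $M(u;\varepsilon)$. Concretely, since $u\in L^2_{r,0}(\T)$ is real-valued with zero mean, its positive-frequency part $\Pi u(z)=\sum_{k\geq 1}\widehat{u}(k)z^k$ is holomorphic on the open unit disk, and $\widehat{u}(-k)=\overline{\widehat{u}(k)}$, so it suffices to compute $\widehat{u}(k)$ for $k\geq 1$. Starting from
\[
\Pi u(z)=\left\langle (\operatorname{Id}-zM)^{-1}X\,\middle|\,Y\right\rangle,
\]
I would expand the resolvent as a Neumann series $(\operatorname{Id}-zM)^{-1}=\sum_{k\geq 0}z^k M^k$, valid for $|z|$ small, so that
\[
\widehat{u}(k)=\langle M^{k}X\mid Y\rangle\qquad(k\geq 1).
\]
So the task reduces to showing $\langle M^{k}X\mid Y\rangle=\varepsilon\,\Tr(M^{k})$.

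The key identity to establish is a factorization of $X$ and $Y$ that turns the rank-one-type pairing $\langle M^k X\mid Y\rangle$ into a trace. The natural candidate is that $X$ and $Y$ are built from the vector of components $\langle\un\mid f_n\rangle$ and the eigenvalues $\lambda_n$, and that $M$ intertwines these in a way reflecting $L_u(\varepsilon)$ and the shift $S$. I would use the commutation relation between $L_u(\varepsilon)$ and $S$ (the analogue, rescaled by $\varepsilon$, of the identity $L_uSf_n=SL_uf_n+Sf_n-\langle uSf_n\mid\un\rangle\un$ used in the proof of Proposition~\ref{prop:cos}), which in the eigenbasis says precisely that the matrix of $L_u(\varepsilon)$ is related to that of $S$ by $\varepsilon(M^*\Lambda - \Lambda M^*) = \varepsilon M^* - X\otimes\overline{Y}$ or some close variant, where $\Lambda=\operatorname{diag}(\lambda_n)$. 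Taking the trace of $M^{k-1}$ times this relation, the diagonal-commutator term $\Tr(M^{k-1}(M^*\Lambda-\Lambda M^*))$ telescopes/vanishes by cyclicity, and one is left with $\varepsilon\Tr(M^k)$ on one side and $\langle M^{k}X\mid Y\rangle$ on the other, possibly after taking adjoints and using that $\widehat u(k)$ is what the generating function produces. An alternative, perhaps cleaner, route is to note that $x\mapsto e^{ikx}$ acts on $L^2_+$ as $S^k$, whence $\widehat{u}(k)=\langle u\,e^{-ikx}\mid \un\rangle$ type manipulations combined with $\Pi(uS^k\un)$, and to expand $u=\sum\lambda_n$-weighted projections onto $f_n$; but the Neumann-series approach tied to~\eqref{eq:inversionM} is the most direct.

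The main obstacle I anticipate is bookkeeping the precise form of the commutation/intertwining relation between $L_u(\varepsilon)$, $S$, and the rank-one correction $\langle uSf_n\mid\un\rangle\un$, and checking that the rank-one terms assemble exactly into $X\otimes\overline Y$ so that the trace identity closes with the correct constant $\varepsilon$ and no leftover terms. In particular, I would need to be careful that the series manipulations (Neumann expansion, interchange of sum and inner product, trace-class-ness of $M^k$ for $k\geq 1$ — $M$ itself is only bounded, being essentially a shift, but the relevant combinations are summable thanks to $\sum n\gamma_n<\infty$) are all justified; the $\ell^2$ and $h^{1/2}$ decay of $(\zeta_n)$ together with the off-diagonal decay of $M_{n,p}$ in~\eqref{def:M} should supply the needed absolute convergence. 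Finally I would record the case $k=0$ separately: $\Tr(M^0)$ is formally infinite, but $\widehat u(0)=0$ by the zero-mean assumption, so the statement is only claimed for $k\geq 1$, consistent with the Neumann series starting its nontrivial contributions at $z^1$.
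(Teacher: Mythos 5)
Your route is genuinely different from the paper's. The paper never touches the inversion formula: it differentiates the mass $\hamilton_1(u)=\frac{1}{2\pi}\int_0^{2\pi}u^2\d x$ in the directions $\cos(kx)$ and $\sin(kx)$, once directly (giving $2\widehat{u}(k)$) and once through the Parseval identity $\hamilton_1(u)=2\varepsilon\sum_n n\gamma_n(u;\varepsilon)$ together with $\d\gamma_n(u;\varepsilon).h=-\langle |f_n|^2-|f_{n-1}|^2\,|\,h\rangle$; after summation by parts this yields $2\varepsilon\sum_n\langle |f_n|^2\,|\,h\rangle$, which is then recognized as the real and imaginary parts of $\varepsilon\sum_n\langle f_n|S^kf_n\rangle=\varepsilon\Tr(M^k)$. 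Your plan --- Neumann expansion of~\eqref{eq:inversionM} to get $\widehat{u}(k)=\langle M^kX|Y\rangle$ (legitimate, since $M$ is the matrix of the isometry $S$, so $\|zM\|=|z|<1$ on $\D$), followed by an intertwining identity and a trace argument --- can indeed be completed, and it has the merit of exhibiting the statement as a purely algebraic identity between the spectral data rather than as a consequence of the Hamiltonian structure via the differential of $\gamma_n$.

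Two points must be repaired before your sketch is a proof, and they are exactly where the content lies. First, the key identity is not the variant you guessed. Rescaling the identity of~\cite{GerardKappeler2019} (Lemma 2.6), quoted with $\varepsilon=1$ in the proof of Proposition~\ref{prop:cos}, gives $(\lambda_p-\lambda_n-\varepsilon)M_{n,p}=-\langle f_p|\un\rangle\,\langle u|Sf_n\rangle$; moreover $L_u(\varepsilon)\un=-\Pi u$, whence $\langle u|Sf_n\rangle=-\sum_p\lambda_p\langle\un|f_p\rangle M_{n,p}=(MX)_n$. In matrix form this reads
\[
M\Lambda-\Lambda M-\varepsilon M=-(MX)\otimes\overline{Y},
\]
with $M$ (not $M^*$) and with $MX$ (not $X$) in the rank-one factor; this bookkeeping matters, because multiplying by $M^{k-1}$ and summing diagonal entries, the two commutator terms both reduce to $\sum_n\lambda_n(M^k)_{nn}$ and cancel, while the rank-one term gives exactly $\langle M^kX|Y\rangle=\widehat{u}(k)$ --- with $X\otimes\overline{Y}$ instead you would close the identity with the wrong index, $\langle M^{k-1}X|Y\rangle=\widehat{u}(k-1)$. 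Second, ``cyclicity of the trace'' cannot be invoked abstractly: $M^k$ is not trace class ($M$ is an isometry, hence not compact) and $\Lambda$ is unbounded, so $\Tr$ here means sum of diagonal entries, and the cancellation $\sum_{n,p}(M^{k-1})_{p,n}M_{n,p}(\lambda_p-\lambda_n)=0$ must be justified by absolute convergence of the double sum and Fubini; this is available thanks to $\sum_n n\gamma_n<\infty$ and the off-diagonal decay visible in~\eqref{def:M}, in the spirit of the bounds of Proposition~\ref{prop:M_ACV}, but it has to be checked rather than asserted. With these two repairs your argument goes through; as written, the pivotal identity is left as ``some close variant,'' which is precisely the step that needs to be pinned down.
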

\begin{proof}
Let $\hamilton_1(u)=\frac{1}{2\pi}\int_{0}^{2\pi}u^2(x)\d x$ be the mass of the solution. We use the differentiation formula 
\[\d \hamilton_1(u).\cos(kx)-i\d \hamilton_1(u).\sin(kx)=2\langle u|\cos(kx)\rangle-2i \langle u|\sin(kx)\rangle=2\widehat{u}(k).
\]
Since the Parseval formula gives
\[
\hamilton_1(u)
	=2\varepsilon\sum_{n\geq 1}n\gamma_n(u;\varepsilon),
\]
taking the differential leads to
\[
\d\hamilton_1(u).h=2\varepsilon\sum_{n\geq 1}n\d\gamma_n(u;\varepsilon).h.
\]
Given that $\gamma_n(u;\varepsilon)=\varepsilon\gamma_n(u/\varepsilon;1)$ and $f_n(u;\varepsilon)=f_n(u/\varepsilon;1)$, one has 
\[
\d\gamma_n(u;\varepsilon).h=-\langle|f_n(u;\varepsilon)|^2-|f_{n-1}(u;\varepsilon)|^2|h\rangle.
\]
We get that
\[
\d\hamilton_1(u).h=-2\varepsilon\sum_{n\geq 1}n\langle|f_n(u;\varepsilon)|^2-|f_{n-1}(u;\varepsilon)|^2|h\rangle.
\]
This leads to the telescopic sum
\[
\d\hamilton_1(u).h=2\varepsilon\sum_{n\geq 1}\langle|f_n(u;\varepsilon)|^2|h\rangle.
\]
But we note that
\begin{align*}
\Tr(M(u;\varepsilon)^k)
	&=\sum_n\langle f_n(u;\varepsilon)|S^kf_n(u;\varepsilon)\rangle\\
	&=\sum_n\langle f_n(u;\varepsilon)|\cos(kx) f_n(u;\varepsilon)\rangle-i \sum_n\langle f_n(u;\varepsilon)|\sin(kx) f_n(u;\varepsilon)\rangle,
\end{align*}
which leads to the identity with $h=\cos(kx)$ and $h=\sin(kx)$.
\end{proof}

\begin{rk}
When $\varepsilon$ is fixed, it is possible to prove that the following expanded formula for the trace
\[
\widehat{u}(k)=\varepsilon\Tr(M^k)
	=\varepsilon\sum_{\substack{n_1,\dots,n_{k+1}\geq 1\\ n_1=n_{k+1}} }\prod_{i=1}^{k}M_{n_i,n_{i+1}}.
\]
is absolutely convergent, but we will see in Proposition~\ref{prop:M_ACV} that one can bound the sum of absolute values of the terms by some constant $C(\delta)$ independent of $\varepsilon$.
\end{rk}

In~\cite{MillerXu2011}  Proposition 4.1, Miller and Xu make a link between the $k$-th moment $\Tr(M(u;\varepsilon)^k)$ and some integral depending on $(x+2\lambda t-x_{\pm}(\lambda))^{k+1}$. In our setting, we can guess what is the corresponding formula on the torus by expressing the $k$-th Fourier coefficient of $u$ in a different manner.

\begin{prop}\label{prop:formule_uk}
For any single well potential $u\in \classeC^1_{r,0}(\T)$ (see Definition~\ref{def:single well}), we have
\[
\widehat{u}(k)=\frac{-i}{2k\pi}\int_{\min u}^{\max u}(e^{-i k x_+(\eta)}-e^{-i k x_-(\eta)})\d\eta.
\]
\end{prop}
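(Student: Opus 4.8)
The plan is to compute $\widehat{u}(k)$ directly from its definition as a one-dimensional integral and use the single well structure to perform a change of variables from $x$ to the level $\eta = u(x)$.

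First I would write $\widehat{u}(k) = \frac{1}{2\pi}\int_0^{2\pi} u(x)\e^{-ikx}\d x$. Since $k\geq 1$, I would integrate by parts, moving the derivative off the exponential: $\widehat{u}(k) = \frac{1}{2\pi}\int_0^{2\pi} u(x)\,\frac{i}{k}\frac{d}{dx}\left(\e^{-ikx}\right)\d x = \frac{-i}{2k\pi}\int_0^{2\pi} u'(x)\e^{-ikx}\d x$, the boundary terms cancelling by $2\pi$-periodicity. Now I would split the integral at $x_{\max}$: on $(0,x_{\max})$ the map $x\mapsto u(x)=\eta$ is a $\classeC^1$ increasing bijection onto $(\min u,\max u)$ with inverse $x_-(\eta)$, and on $(x_{\max},2\pi)$ it is a decreasing bijection with inverse $x_+(\eta)$. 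On the first piece, substituting $\eta = u(x)$, $\d\eta = u'(x)\d x$, and $\e^{-ikx} = \e^{-ikx_-(\eta)}$, gives $\frac{-i}{2k\pi}\int_{\min u}^{\max u}\e^{-ikx_-(\eta)}\d\eta$. On the second piece the substitution reverses orientation, contributing $\frac{-i}{2k\pi}\int_{\max u}^{\min u}\e^{-ikx_+(\eta)}\d\eta = \frac{+i}{2k\pi}\int_{\min u}^{\max u}\e^{-ikx_+(\eta)}\d\eta$. Combining the two pieces yields exactly
\[
\widehat{u}(k)=\frac{-i}{2k\pi}\int_{\min u}^{\max u}\left(\e^{-ikx_+(\eta)}-\e^{-ikx_-(\eta)}\right)\d\eta,
\]
as claimed.

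One should double-check the sign bookkeeping in the orientation-reversing substitution, and note that the zero-mean hypothesis is not actually needed here (it is built into $L^2_{r,0}$ but plays no role since $k\geq 1$); only the single well structure, which guarantees $u'>0$ then $u'<0$ with a single interior maximum, is used. A minor technical point: $u'$ may vanish at the endpoints $0$, $x_{\max}$, $2\pi$, but since $u\in\classeC^1$ the change of variables is still valid as an improper integral (or one can simply observe $x_\pm$ are continuous and the integrals are absolutely convergent). There is no real obstacle; the only thing to be careful about is matching conventions so that $x_-(\eta)$ corresponds to the branch where $u$ is increasing and $x_+(\eta)$ to the branch where $u$ is decreasing, consistent with the definitions $x_-(\eta)=\inf\{x\mid u_0(x)=\eta\}$ and $x_+(\eta)=\sup\{x\mid u_0(x)=\eta\}$ given after Definition~\ref{def:single well}.
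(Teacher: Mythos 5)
Your route is exactly the paper's: integrate by parts to pass to $\frac{-i}{2k\pi}\int_0^{2\pi}u'(x)e^{-ikx}\d x$, split at $x_{\max}$, and change variables $\eta=u(x)$ on each monotone branch, with $x_-$ the inverse of the increasing branch and $x_+$ of the decreasing one. Your two intermediate pieces are computed correctly; the only problem is the last line, which does not follow from them: adding $\frac{-i}{2k\pi}\int_{\min u}^{\max u}e^{-ikx_-(\eta)}\d\eta$ and $\frac{+i}{2k\pi}\int_{\min u}^{\max u}e^{-ikx_+(\eta)}\d\eta$ gives $\frac{-i}{2k\pi}\int_{\min u}^{\max u}\bigl(e^{-ikx_-(\eta)}-e^{-ikx_+(\eta)}\bigr)\d\eta$, i.e.\ the opposite sign of the displayed conclusion.

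Your caveat about the sign bookkeeping was the right instinct: the same tension is present in the paper itself, whose proof describes the substitutions exactly as you do but then states the conclusion with $x_+$ and $x_-$ interchanged. A direct check with $u(x)=-\cos x$ and $k=1$ (so $x_\mp(\eta)=\pi\mp\arccos\eta$, $e^{-ix_+}-e^{-ix_-}=2i\sqrt{1-\eta^2}$, and $\widehat u(1)=-\tfrac12$) confirms that the correct identity is the one your intermediate steps produce, $\widehat u(k)=\frac{-i}{2k\pi}\int_{\min u}^{\max u}\bigl(e^{-ikx_-(\eta)}-e^{-ikx_+(\eta)}\bigr)\d\eta$, so the sign slip is in the statement (and the paper's final display), not in your method. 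Apart from flagging that the last equality should be corrected rather than forced to match the statement, your argument is the same as the paper's and complete.
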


\begin{proof}
We integrate by parts
\[
2\pi\widehat{u}(k)
	=\left[\frac{u(x)e^{-ikx}}{-ik}\right]_{0}^{2\pi}-\frac{i}{k}\int_{0}^{2\pi}\partial_x u(x) e^{-ikx}\d x,
\]
where the crochet vanishes by periodicity. Let $x_{\max}\in [0,2\pi]$ the unique point for which $u(x_{\max})=\max_{\T}u$. We split the integral between the zones $[0,x_{\max}]$ on which $u$ is increasing, and $[x_{\max},2\pi]$ on which $u$ is decreasing. This leads to
\[
2\pi\widehat{u}(k)
	=-\frac{i}{k}\int_{0}^{x_{\max}}\partial_x u(x) e^{-ikx}\d x-\frac{i}{k}\int_{x_{\max}}^{2\pi}\partial_x u(x) e^{-ikx}\d x.
\]
Then we make the change of variable $\eta=u(x)$ (or $x_-(\eta)=x$) in the first term of the right hand side, and $\eta=u(x)$ (or $x_+(\eta)=u(x)$) in the second term of the right hand side. Since in both cases there holds $\d\eta=\partial_x u(x)\d x$, we get
\[
2\pi\widehat{u}(k)
	=-\frac{i}{k}\int_{\min(u)}^{\max(u)} e^{-ikx_+(\eta)}\d \eta-\frac{i}{k}\int_{\max(u)}^{\min(u)} e^{-ikx_-(\eta)}\d \eta.\qedhere
\]
\end{proof}

\subsection{Upper bounds for the Fourier coefficients}


We now fix some $k\in\Z$ and estimate the $k$-th Fourier coefficient  $\widehat{u_0^{\varepsilon}}(k)$ of the approximate initial data $u_0^{\varepsilon}$, where the rate of convergence may depend on $k$. 

In this part, we first establish some upper bounds on the matrix coefficients $M_{n,p}(u_0^{\varepsilon};\varepsilon)$. As a consequence, we justify that in the formula for $\varepsilon\Tr(M(u_0^{\varepsilon}; \varepsilon)^k)$, we can neglect the terms when the indexes $n,p$ are too off-diagonal $|p-n|\geq \varepsilon^{-r}$ for some $0<r<1$, and when the Lax eigenvalue $\lambda_n$ is not in the region $\lambda_n+\varepsilon\in\Lambda_-(\delta)$.

Up to replacing $u_0^{\varepsilon}$ by some very close initial data $v_0^{\varepsilon}$ in the proof, one can assume that for every $n$, there holds $\gamma_n(u_0^{\varepsilon};\varepsilon)\neq 0$. Indeed, let $T,\delta_0>0$. By continuity of the flow map for~\eqref{eq:bo}, there exists $c_1(\varepsilon)$ such that if $\|u_0^{\varepsilon}-v_0^{\varepsilon}\|_{L^2}\leq c_1(\varepsilon)$, then we have
\begin{equation}\label{ineq:flow_epsilon}
\sup_{t\in[0,T]}\|u^{\varepsilon}(t)-v^{\varepsilon}(t)\|_{L^2}\leq \delta_0.
\end{equation}
By continuity of the inverse Birkhoff map $\Phi(\varepsilon)^{-1}$, there exists $c_2(\varepsilon)>0$ such that if
\[
\sum_{k}\varepsilon k|\zeta_k(u_0^{\varepsilon};\varepsilon)-\zeta_k(v_0^{\varepsilon};\varepsilon)|^2<c_2(\varepsilon),
\]
then $\|u_0^{\varepsilon}-v_0^{\varepsilon}\|_{L^2}\leq c_1(\varepsilon)$. We choose $v_0^{\varepsilon}$ under the form $\zeta_k(v_0^{\varepsilon};\varepsilon)=\varepsilon_k(\varepsilon)>0$ as soon as $\zeta_k(u_0^{\varepsilon};\varepsilon)=0$, $\zeta_k(v_0^{\varepsilon};\varepsilon)=\zeta_k(u_0^{\varepsilon};\varepsilon)$ otherwise, with $\varepsilon_k(\varepsilon)$ small enough so as to satisfy the above inequality. Then inequality~\eqref{ineq:flow_epsilon} holds. As a consequence, for every $t\in [0,T]$, there holds
\[
\sup_{t\in[0,T]}|\widehat{u^{\varepsilon}(t)}(k)-\widehat{v^{\varepsilon}(t)}(k)|\leq \delta_0,
\]
and since $\delta_0$ is arbitrary, the convergence of the Fourier coefficients for $v^{\varepsilon}$ are enough to conclude the proof of Theorem~\ref{thm:main} for the family $(u_0^{\varepsilon})_{\varepsilon}$.

In what follows, we fix $\varepsilon>0$ and drop the $\varepsilon$ in the notation, for instance $\lambda_n$  stands for $\lambda_n(u_0^{\varepsilon};\varepsilon)$. Recall that when $\gamma_{n+1}\neq 0$, then~\cite{GerardKappeler2019} 
\[
M_{n,p}(u_0^{\varepsilon};\varepsilon)=\sqrt{a_n\gamma_{n+1}\gamma_p}\frac{1}{\lambda_p-\lambda_n-\varepsilon}e^{i(\theta_{n+1}-\theta_p)}
\]
where
\[
a_n=\mu_{n+1}\frac{\kappa_p}{\kappa_{n+1}}> 0,
\]
\begin{equation}\label{eq:kappa_n}
\kappa_n=\frac{1}{\lambda_n-\lambda_0}\prod_{\substack{p=1\\p\neq n}}^{\infty}\left(1-\frac{\gamma_p}{\lambda_p-\lambda_n}\right),
\end{equation}
\begin{equation}\label{eq:mu_n}
\mu_{n+1}=\left(1-\frac{\gamma_{n+1}}{\lambda_{n+1}-\lambda_0}\right)\prod_{\substack{p=1\\p\neq n+1}}^{\infty}\frac{\left(1-\frac{\gamma_p}{\lambda_p-\lambda_{n+1}}\right)}{\left(1-\frac{\gamma_p}{\lambda_p-\lambda_n-\varepsilon}\right)}.
\end{equation}

\begin{lem}[Formula for $a_n$] The following formula holds for every $n\geq 1$
\[
a_n\frac{\gamma_n\gamma_{n+1}}{\varepsilon^2}
	=\left(1+\frac{\varepsilon}{\lambda_n-\lambda_0}\right)
	\prod_{\substack{p=1\\p\neq n}}^{\infty} \left(1-\frac{\varepsilon^2}{(\lambda_p-\lambda_n)^2}\right).
\]
\end{lem}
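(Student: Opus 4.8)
The plan is to compute $a_n\gamma_n\gamma_{n+1}/\varepsilon^2$ directly from the definitions~\eqref{eq:kappa_n} and~\eqref{eq:mu_n}, using the recursion $\lambda_{n+1}=\lambda_n+\varepsilon+\gamma_{n+1}$ to simplify the extra factors. Recall that $a_n=\mu_{n+1}\kappa_p/\kappa_{n+1}$; since we are computing $a_n$ itself (not a matrix coefficient $M_{n,p}$), the relevant combination is $a_n=\mu_{n+1}\kappa_n/\kappa_{n+1}$ — that is, the $\kappa_p$ appearing in $M_{n,p}$ with $p=n$, which is the case $p\neq n+1$. So first I would write out
\[
a_n\frac{\gamma_n\gamma_{n+1}}{\varepsilon^2}
	=\mu_{n+1}\,\frac{\kappa_n}{\kappa_{n+1}}\,\frac{\gamma_n\gamma_{n+1}}{\varepsilon^2},
\]
and then expand each of the three factors.

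Next I would handle the quotient $\kappa_n/\kappa_{n+1}$. From~\eqref{eq:kappa_n},
\[
\frac{\kappa_n}{\kappa_{n+1}}
	=\frac{\lambda_{n+1}-\lambda_0}{\lambda_n-\lambda_0}\cdot
	\frac{\prod_{p\neq n}\bigl(1-\frac{\gamma_p}{\lambda_p-\lambda_n}\bigr)}{\prod_{p\neq n+1}\bigl(1-\frac{\gamma_p}{\lambda_p-\lambda_{n+1}}\bigr)}.
\]
The two products overlap on all indices $p\notin\{n,n+1\}$, so after cancellation I am left with: from the numerator the factor with $p=n+1$, namely $1-\frac{\gamma_{n+1}}{\lambda_{n+1}-\lambda_n}=1-\frac{\gamma_{n+1}}{\varepsilon+\gamma_{n+1}}=\frac{\varepsilon}{\lambda_{n+1}-\lambda_n}$, and from the denominator the factor with $p=n$, namely $1-\frac{\gamma_n}{\lambda_n-\lambda_{n+1}}=1+\frac{\gamma_n}{\varepsilon+\gamma_{n+1}}=\frac{\varepsilon+\gamma_n+\gamma_{n+1}}{\lambda_{n+1}-\lambda_n}=\frac{\lambda_{n+1}-\lambda_{n-1}-\varepsilon}{\varepsilon+\gamma_{n+1}}$ (using $\lambda_{n-1}=\lambda_n-\varepsilon-\gamma_n$). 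Similarly I would expand $\mu_{n+1}$ from~\eqref{eq:mu_n}: the prefactor is $1-\frac{\gamma_{n+1}}{\lambda_{n+1}-\lambda_0}=\frac{\lambda_n+\varepsilon-\lambda_0}{\lambda_{n+1}-\lambda_0}$, and the product is $\prod_{p\neq n+1}\frac{1-\gamma_p/(\lambda_p-\lambda_{n+1})}{1-\gamma_p/(\lambda_p-\lambda_n-\varepsilon)}$; since $\lambda_{n+1}=\lambda_n+\varepsilon+\gamma_{n+1}$, the numerator and denominator here are \emph{almost} the same product but with shifted arguments, and the ratio telescopes into boundary terms.

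The bookkeeping is where I expect the real work to lie: I would combine all the pieces, using repeatedly that $\lambda_p-\lambda_n-\varepsilon=(\lambda_p-\lambda_n)-\varepsilon$ and $\lambda_p-\lambda_{n+1}=(\lambda_p-\lambda_n)-\varepsilon-\gamma_{n+1}$, and recognize that the surviving infinite product assembles into $\prod_{p\neq n}\bigl(1-\frac{\varepsilon^2}{(\lambda_p-\lambda_n)^2}\bigr)=\prod_{p\neq n}\frac{(\lambda_p-\lambda_n-\varepsilon)(\lambda_p-\lambda_n+\varepsilon)}{(\lambda_p-\lambda_n)^2}$. The factors $\gamma_n\gamma_{n+1}/\varepsilon^2$ on the left should cancel exactly against the denominators $\frac{\varepsilon}{\lambda_{n+1}-\lambda_n}$ and $\frac{\lambda_{n+1}-\lambda_{n-1}-\varepsilon}{\varepsilon+\gamma_{n+1}}$ produced above together with leftover pieces of $\mu_{n+1}$, leaving the clean prefactor $1+\frac{\varepsilon}{\lambda_n-\lambda_0}$. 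The main obstacle is making the telescoping of the two infinite products in $\mu_{n+1}\kappa_n/\kappa_{n+1}$ rigorous — one has to check convergence of each product separately (which follows from $\sum_p\gamma_p<\infty$, equivalently $\sum_p n\gamma_p<\infty$ by~\eqref{eq:Parseval}) before rearranging, and track exactly which boundary factors survive; once that is done, the identity is a finite algebraic simplification using only $\lambda_{n+1}-\lambda_n=\varepsilon+\gamma_{n+1}$ and $\lambda_n-\lambda_{n-1}=\varepsilon+\gamma_n$.
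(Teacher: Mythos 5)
Your overall plan --- writing $a_n=\mu_{n+1}\kappa_n/\kappa_{n+1}$ and simplifying directly from~\eqref{eq:kappa_n} and~\eqref{eq:mu_n} with the recursions $\lambda_{n+1}-\lambda_n=\varepsilon+\gamma_{n+1}$, $\lambda_n-\lambda_{n-1}=\varepsilon+\gamma_n$ --- is the same as the paper's, but the two concrete cancellation claims you make are false, and they sit exactly where the content of the lemma lies. In the quotient $\kappa_n/\kappa_{n+1}$, the factor at a common index $p\notin\{n,n+1\}$ is $1-\frac{\gamma_p}{\lambda_p-\lambda_n}$ in the numerator but $1-\frac{\gamma_p}{\lambda_p-\lambda_{n+1}}$ in the denominator; these are different numbers, so the bulk of the two products does \emph{not} cancel and you are not left with only the boundary factors you list (those boundary factors themselves you computed correctly). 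Likewise, the product in $\mu_{n+1}$ does not ``telescope into boundary terms'': its numerator and denominator differ by a shift of the spectral argument ($\lambda_{n+1}$ versus $\lambda_n+\varepsilon$, i.e.\ by $\gamma_{n+1}$), not of the index $p$, so no telescoping in $p$ occurs. The cancellation that actually happens is between the product in $\kappa_{n+1}$ and the numerator of the product in $\mu_{n+1}$, both equal to $\prod_{p\neq n+1}\bigl(1-\frac{\gamma_p}{\lambda_p-\lambda_{n+1}}\bigr)$, which leaves
\[
a_n=\frac{\lambda_n-\lambda_0+\varepsilon}{\lambda_n-\lambda_0}\,
\frac{\prod_{p\neq n}\bigl(1-\frac{\gamma_p}{\lambda_p-\lambda_n}\bigr)}{\prod_{p\neq n+1}\bigl(1-\frac{\gamma_p}{\lambda_p-\lambda_n-\varepsilon}\bigr)}.
\]
If you instead follow your reduction literally, your expression differs from this one by the nontrivial factor $\prod_{p\neq n,n+1}\frac{1-\gamma_p/(\lambda_p-\lambda_n)}{1-\gamma_p/(\lambda_p-\lambda_{n+1})}$, and the product $\prod_{p\neq n}\bigl(1-\frac{\varepsilon^2}{(\lambda_p-\lambda_n)^2}\bigr)$ cannot be recovered.

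The missing idea is the re-indexing that drives the paper's computation: since $\lambda_p-\gamma_p=\lambda_{p-1}+\varepsilon$, each factor rewrites as $1-\frac{\gamma_p}{\lambda_p-\lambda_n}=\frac{\lambda_{p-1}-\lambda_n+\varepsilon}{\lambda_p-\lambda_n}$ and $1-\frac{\gamma_p}{\lambda_p-\lambda_n-\varepsilon}=\frac{\lambda_{p-1}-\lambda_n}{\lambda_p-\lambda_n-\varepsilon}$, so the quotient above (with the $p=n+1$ and $p=n$ terms extracted, giving your factor $\frac{\varepsilon^2}{(\varepsilon+\gamma_n)(\varepsilon+\gamma_{n+1})}$) becomes $\prod_{p\neq n,n+1}\frac{\lambda_{p-1}-\lambda_n+\varepsilon}{\lambda_p-\lambda_n}\cdot\frac{\lambda_p-\lambda_n-\varepsilon}{\lambda_{p-1}-\lambda_n}$. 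Shifting $p-1\mapsto p$ in the factors containing $\lambda_{p-1}$ produces $\prod_{p\neq n}\bigl(1+\frac{\varepsilon}{\lambda_p-\lambda_n}\bigr)\bigl(1-\frac{\varepsilon}{\lambda_p-\lambda_n}\bigr)$ together with the correction $\bigl(\frac{\lambda_{n-1}-\lambda_n+\varepsilon}{\lambda_{n-1}-\lambda_n}\bigr)^{-1}\bigl(\frac{\lambda_{n+1}-\lambda_n-\varepsilon}{\lambda_{n+1}-\lambda_n}\bigr)^{-1}=\frac{(\varepsilon+\gamma_n)(\varepsilon+\gamma_{n+1})}{\gamma_n\gamma_{n+1}}$, which combines with your boundary factor to give exactly $\frac{\varepsilon^2}{\gamma_n\gamma_{n+1}}$ and hence the stated identity. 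Your remark about verifying convergence of the products before rearranging is sound, but without this index shift the ``bookkeeping'' you defer cannot close.
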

\begin{proof}

We simplify the product
\(
a_n=\mu_{n+1}\frac{\kappa_n}{\kappa_{n+1}}
\)
as
\begin{align*}
a_n
	&=\frac{\lambda_n-\lambda_0+\varepsilon}{\lambda_n-\lambda_0}
	\left(\prod_{\substack{p=1\\p\neq n,n+1}}^{\infty} \frac{\left(1-\frac{\gamma_p}{\lambda_p-\lambda_n}\right)}{\left(1-\frac{\gamma_p}{\lambda_p-\lambda_n-\varepsilon}\right)}\right)
	\frac{1-\frac{\gamma_{n+1}}{\lambda_{n+1}-\lambda_n}}{1-\frac{\gamma_n}{\lambda_n-\lambda_{n+1}}}\cdot
	\frac{1-\frac{\gamma_n}{\lambda_n-\lambda_{n+1}}}{1-\frac{\gamma_n}{-\varepsilon}}\\
	&=\frac{\lambda_n-\lambda_0+\varepsilon}{\lambda_n-\lambda_0}
	\left(\prod_{\substack{p=1\\p\neq n,n+1}}^{\infty} \frac{\lambda_{p-1}-\lambda_n+\varepsilon}{\lambda_p-\lambda_n}\frac{\lambda_p-\lambda_n-\varepsilon}{\lambda_{p-1}-\lambda_n}\right)\frac{\varepsilon^2}{(\varepsilon+\gamma_{n+1})(\varepsilon+\gamma_n)}.
\end{align*}
Then one can re-index the product to get
\begin{multline*}
a_n
	=\frac{\lambda_n-\lambda_0+\varepsilon}{\lambda_n-\lambda_0}
	\left(\prod_{\substack{p=1\\p\neq n}}^{\infty} \left(1+\frac{\varepsilon}{\lambda_p-\lambda_n}\right)\left(1-\frac{\varepsilon}{\lambda_p-\lambda_n}\right)\right)\\
	\left(\frac{\lambda_{n-1}-\lambda_n+\varepsilon}{\lambda_{n-1}-\lambda_n}\right)^{-1}\left(\frac{\lambda_{n+1}-\lambda_n-\varepsilon}{\lambda_{n+1}-\lambda_n}\right)^{-1}\frac{\varepsilon^2}{(\varepsilon+\gamma_{n+1})(\varepsilon+\gamma_n)}
\end{multline*}
so that
\begin{equation*}
a_n
	=\frac{\lambda_n-\lambda_0+\varepsilon}{\lambda_n-\lambda_0}
	\left(\prod_{\substack{p=1\\p\neq n}}^{\infty} \left(1+\frac{\varepsilon}{\lambda_p-\lambda_n}\right)\left(1-\frac{\varepsilon}{\lambda_p-\lambda_n}\right)\right)\frac{\varepsilon^2}{\gamma_{n+1}\gamma_n}.\qedhere
\end{equation*}
\end{proof}

\begin{lem}[Bounds for $a_n$]\label{lem:an_bound}
There exist $C>0$ such that  for every $\varepsilon>0$ and $\delta>0$, the following holds. For every $n\geq 1$,
\[
a_n\frac{\gamma_n\gamma_{n+1}}{\varepsilon^2}
+
a_n
+
a_n\frac{\gamma_{n+1}}{\varepsilon}
	+a_n\frac{\gamma_{n}}{\varepsilon}
	\leq C.
\]
\end{lem}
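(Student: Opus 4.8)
The plan is to prove the bound for each of the four quantities separately, starting from the explicit formula
\[
a_n\frac{\gamma_n\gamma_{n+1}}{\varepsilon^2}
	=\left(1+\frac{\varepsilon}{\lambda_n-\lambda_0}\right)
	\prod_{\substack{p=1\\p\neq n}}^{\infty} \left(1-\frac{\varepsilon^2}{(\lambda_p-\lambda_n)^2}\right)
\]
established in the previous lemma. First I would observe that since the gaps $\gamma_p\geq 0$, the spacing $\lambda_p-\lambda_n$ for $p>n$ is at least $(p-n)\varepsilon$, and for $p<n$ is at most $(n-p)\varepsilon$ in absolute value but at least $\varepsilon$; more precisely $|\lambda_p-\lambda_n|\geq|p-n|\varepsilon$ for all $p\neq n$. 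Hence each factor $1-\frac{\varepsilon^2}{(\lambda_p-\lambda_n)^2}$ lies in $(0,1]$ (the factor for $|p-n|=1$ could be small or zero, but never negative), so the infinite product is a number in $[0,1]$. Also $\lambda_n-\lambda_0\geq n\varepsilon\geq\varepsilon$ for $n\geq1$, so $1+\frac{\varepsilon}{\lambda_n-\lambda_0}\leq 2$. This immediately gives $a_n\frac{\gamma_n\gamma_{n+1}}{\varepsilon^2}\leq 2$, handling the first term.

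For the remaining three terms I would use the trivial bounds $\gamma_n\leq\varepsilon\cdot\frac{\gamma_n}{\varepsilon}$ combined with the fact that the product formula, when we drop the two neighbouring factors $p=n-1$ and $p=n+1$ that contain $\gamma_n$ and $\gamma_{n+1}$, controls $a_n$ itself. Concretely, writing
\[
a_n=\frac{\varepsilon^2}{\gamma_n\gamma_{n+1}}\left(1+\frac{\varepsilon}{\lambda_n-\lambda_0}\right)\prod_{\substack{p=1\\p\neq n}}^{\infty}\left(1-\frac{\varepsilon^2}{(\lambda_p-\lambda_n)^2}\right),
\]
the $p=n+1$ factor is $1-\frac{\varepsilon^2}{(\lambda_{n+1}-\lambda_n)^2}=1-\frac{\varepsilon^2}{(\varepsilon+\gamma_{n+1})^2}=\frac{\gamma_{n+1}(2\varepsilon+\gamma_{n+1})}{(\varepsilon+\gamma_{n+1})^2}$, and similarly the $p=n-1$ factor (when $n\geq2$) is $\frac{\gamma_n(2\varepsilon+\gamma_n)}{(\varepsilon+\gamma_n)^2}$. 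These two factors carry exactly the $\gamma_n$ and $\gamma_{n+1}$ that cancel the prefactor $\frac{\varepsilon^2}{\gamma_n\gamma_{n+1}}$; after cancellation one is left with $\varepsilon^2\cdot\frac{2\varepsilon+\gamma_{n+1}}{(\varepsilon+\gamma_{n+1})^2}\cdot\frac{2\varepsilon+\gamma_n}{(\varepsilon+\gamma_n)^2}$ times the remaining product over $p\neq n-1,n,n+1$, which is bounded by $1$, times the bounded factor $1+\frac{\varepsilon}{\lambda_n-\lambda_0}\leq 2$. Using $\frac{2\varepsilon+\gamma}{(\varepsilon+\gamma)^2}\leq\frac{2}{\varepsilon}$ for $\gamma\geq0$, this yields $a_n\leq 2\cdot\varepsilon^2\cdot\frac{2}{\varepsilon}\cdot\frac{2}{\varepsilon}=8$, so $a_n\leq C$. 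The case $n=1$ is handled separately since there is no $p=n-1=0$ term in the product; there one simply uses the $p=n+1$ factor alone against $\frac{\varepsilon^2}{\gamma_n\gamma_{n+1}}$ together with $\gamma_1\geq$ something, or more simply estimates directly using $\lambda_1-\lambda_0=\varepsilon+\gamma_1$.

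For $a_n\frac{\gamma_{n+1}}{\varepsilon}$ and $a_n\frac{\gamma_n}{\varepsilon}$ I would multiply the identity for $a_n\frac{\gamma_n\gamma_{n+1}}{\varepsilon^2}\leq 2$ by $\frac{\varepsilon}{\gamma_n}$ respectively $\frac{\varepsilon}{\gamma_{n+1}}$, giving $a_n\frac{\gamma_{n+1}}{\varepsilon}=\frac{\varepsilon}{\gamma_n}\cdot a_n\frac{\gamma_n\gamma_{n+1}}{\varepsilon^2}$; this is not directly bounded because $\gamma_n$ can be small. Instead the clean route is: from the $n=n+1$ (or $n=n-1$) factor argument above we actually get a two-sided control. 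Keeping the $p=n+1$ factor explicit, $a_n\frac{\gamma_{n+1}}{\varepsilon}=\frac{\varepsilon}{\gamma_n}\left(1+\frac{\varepsilon}{\lambda_n-\lambda_0}\right)\gamma_n(2\varepsilon+\gamma_n)/(\varepsilon+\gamma_n)^2\cdot\prod_{p\neq n-1,n,n+1}(\cdots)\cdot$ wait — more carefully, retaining both neighbour factors: $a_n\frac{\gamma_{n+1}}{\varepsilon}\leq C$ follows by retaining only the $p=n-1$ factor $\frac{\gamma_n(2\varepsilon+\gamma_n)}{(\varepsilon+\gamma_n)^2}$ against the $\frac{\varepsilon^2}{\gamma_n\gamma_{n+1}}\cdot\gamma_{n+1}=\frac{\varepsilon^2}{\gamma_n}$ prefactor: this gives $\varepsilon^2\cdot\frac{2\varepsilon+\gamma_n}{(\varepsilon+\gamma_n)^2}\leq\varepsilon^2\cdot\frac{2}{\varepsilon}=2\varepsilon$, times the remaining product $\leq1$ and the factor $\leq 2$, so $a_n\frac{\gamma_{n+1}}{\varepsilon}\leq 4\varepsilon\leq C$ for bounded $\varepsilon$ — and symmetrically $a_n\frac{\gamma_n}{\varepsilon}\leq C$ using the $p=n+1$ factor instead. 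The main obstacle here is purely bookkeeping: making sure the edge cases $n=1$ (no $p=n-1$ factor) and the possibility $\gamma_n=0$ or $\gamma_{n+1}=0$ (which is excluded by the reduction to $v_0^\varepsilon$ with all gaps nonzero, as explained before Lemma~\ref{lem:an_bound}) are consistently handled, and tracking which neighbour factor cancels which $\gamma$. Once the cancellation structure is organized, every bound is a one-line consequence of $0\leq\prod(\cdots)\leq1$, $\lambda_p-\lambda_n\geq|p-n|\varepsilon$, and $\frac{2\varepsilon+\gamma}{(\varepsilon+\gamma)^2}\leq\frac{2}{\varepsilon}$.
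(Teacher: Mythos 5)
Your argument is the same as the paper's: take the product formula for $a_n\frac{\gamma_n\gamma_{n+1}}{\varepsilon^2}$, observe that $|\lambda_p-\lambda_n|\geq|p-n|\varepsilon$ puts every factor in $[0,1]$ and the prefactor below $2$, and for the remaining three quantities cancel the gaps in the denominator against the neighbouring factors $p=n\pm1$, which the paper does through the identities $\frac{\gamma_n}{\varepsilon+\gamma_n}=1+\frac{\varepsilon}{\lambda_{n-1}-\lambda_n}$ and $\frac{\gamma_{n+1}}{\varepsilon+\gamma_{n+1}}=1-\frac{\varepsilon}{\lambda_{n+1}-\lambda_n}$. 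Two points deserve attention.

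First, a harmless arithmetic slip: for $a_n\frac{\gamma_{n+1}}{\varepsilon}$ the prefactor to be cancelled is $\frac{\varepsilon}{\gamma_n}$, not $\frac{\varepsilon^2}{\gamma_n}$, so retaining the $p=n-1$ factor gives $2\varepsilon\,\frac{2\varepsilon+\gamma_n}{(\varepsilon+\gamma_n)^2}\leq 4$, a constant; your claimed bound $4\varepsilon$ is actually false in general (take $\gamma_n\sim\varepsilon$), but the lemma only needs the constant. Second, and more substantively, your $n=1$ case is not closed by either of the patches you suggest: there is no lower bound on $\gamma_1$, and the prefactor $1+\frac{\varepsilon}{\lambda_1-\lambda_0}=\frac{2\varepsilon+\gamma_1}{\varepsilon+\gamma_1}$ carries no $\gamma_1$ in its numerator, so with the product literally starting at $p=1$ nothing cancels the $\gamma_1$ left in the denominator of $a_1$ (or of $a_1\frac{\gamma_2}{\varepsilon}$). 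The resolution is that, tracking the re-indexation in the proof of the preceding formula lemma, the boundary index $p=0$ should in fact be present: the prefactor is really $1-\frac{\varepsilon^2}{(\lambda_n-\lambda_0)^2}$ rather than $1+\frac{\varepsilon}{\lambda_n-\lambda_0}$ (for $n\geq 2$ this only strengthens all bounds, since the extra factor lies in $(0,1]$). Since $\lambda_1-\lambda_0=\varepsilon+\gamma_1$, this boundary factor equals $\frac{\gamma_1(2\varepsilon+\gamma_1)}{(\varepsilon+\gamma_1)^2}$ and plays exactly the role of the missing $p=n-1$ factor, after which your cancellation argument runs uniformly in $n\geq 1$. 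Note that the paper's own proof also invokes the factor $1-\frac{\varepsilon}{\lambda_{n-1}-\lambda_n}$ for every $n$ without special-casing $n=1$, so it relies on the same corrected bookkeeping; your proposal is therefore essentially the paper's proof, with this edge case flagged but not correctly repaired.
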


\begin{proof}
These inequalities are a direct consequence of the formula for $a_n$. Indeed, we have
\[
0\leq
	a_n\frac{\gamma_n\gamma_{n+1}}{\varepsilon^2}
	\leq 1+\frac{\varepsilon}{\lambda_n-\lambda_0}
	\leq 2.
\]
Similarly, since $\frac{\gamma_n}{\varepsilon+\gamma_n}=1+\frac{\varepsilon}{\lambda_{n-1}-\lambda_{n}}$ and  $\frac{\gamma_{n+1}}{\varepsilon+\gamma_{n+1}}=1-\frac{\varepsilon}{\lambda_{n+1}-\lambda_n}$, we get
\begin{equation*}
0
	\leq a_n
	\leq \left(1+\frac{\varepsilon}{\lambda_n-\lambda_0}\right)\left(1+\frac{\varepsilon}{\lambda_{n+1}-\lambda_n}\right)\left(1-\frac{\varepsilon}{\lambda_{n-1}-\lambda_{n}}\right)\frac{\varepsilon^2}{(\varepsilon+\gamma_{n+1})(\varepsilon+\gamma_n)}
	\leq 8,
\end{equation*}
\begin{equation*}
0
	\leq a_n\frac{\gamma_n}{\varepsilon}
	\leq \left(1+\frac{\varepsilon}{\lambda_n-\lambda_0}\right)\left(1-\frac{\varepsilon}{\lambda_{n-1}-\lambda_{n}}\right)\frac{\varepsilon}{\varepsilon+\gamma_n}
	\leq 4,
\end{equation*}
\begin{equation*}
0
	\leq a_n\frac{\gamma_{n+1}}{\varepsilon}
	\leq \left(1+\frac{\varepsilon}{\lambda_n-\lambda_0}\right)\left(1+\frac{\varepsilon}{\lambda_{n+1}-\lambda_n}\right)
	\frac{\varepsilon}{\varepsilon+\gamma_{n+1}}
	\leq 4.\qedhere
\end{equation*}
\end{proof}

Now, we remove the coefficients which are too far from the diagonal in the  sense that $|n-p|\geq \varepsilon^{-r}$ for some fixed parameter  $0<r<1$. We also justify that we can neglect the coefficients outside the region $\lambda_n+\varepsilon\in\Lambda_-(\delta)$.

\begin{rk}[Absolute convergence]\label{rk:toeplitz_miller}
In order to establish error bounds, we first prove absolute convergence of the summand. Thanks to Miller and Xu~\cite{MillerXu2011}, Lemma 4.7, we know the convergence of the series

\[
\sum_{\substack{m_1,\dots,m_{k}\in\Z\\ m_1+\dots+m_k=0}}\prod_{i=1}^{k}\frac{1}{|m_i|}
	=\frac{1}{2\pi}\int_{0}^{2\pi}g(\theta)^k\d\theta,
\]
where $g(\theta):=-\log(2(1-\cos(\theta))$ for $0<\theta<2\pi$ satisfies $g\in L^2_{r,0}(\T)$  and $\widehat{g}(k)=1/|k|$ for $k\neq 0$.
\end{rk}

\begin{lem}[Bounds for $\lambda_n-\lambda_p$]\label{lem:lambdanp}
For every $n,p\geq 0$ such that $n\neq p$, there holds
\[\frac{\varepsilon^2}{(\lambda_p-\lambda_n)^2}\leq\frac{1}{|p-n|^2}.
\]
Moreover, when $p\neq n+1$ one has
\begin{equation*}
\frac{\varepsilon}{|\lambda_p-\lambda_n-\varepsilon|}
	\leq \frac{2}{|p-n|},
\end{equation*}
whereas when $p=n+1$, one has
\begin{equation*}
\frac{1}{\lambda_{n+1}-\lambda_n-\varepsilon}
	=\frac{1}{\gamma_{n+1}}.
\end{equation*}
\end{lem}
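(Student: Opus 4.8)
The plan is to derive all three estimates directly from the basic structural facts about the eigenvalues recalled in Section~\ref{part:notation_lax}, namely that the gaps $\gamma_n(u;\varepsilon)=\lambda_n-\lambda_{n-1}-\varepsilon$ are nonnegative, so that $(\lambda_n)_{n\geq 0}$ is a strictly increasing sequence with consecutive spacing \emph{at least} $\varepsilon$. First I would observe that for $n<p$ this monotonicity gives $\lambda_p-\lambda_n=\sum_{j=n+1}^{p}(\varepsilon+\gamma_j)\geq (p-n)\varepsilon>0$, and symmetrically for $n>p$ we get $\lambda_n-\lambda_p\geq (n-p)\varepsilon$; in both cases $|\lambda_p-\lambda_n|\geq |p-n|\varepsilon$. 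Squaring and rearranging yields $\frac{\varepsilon^2}{(\lambda_p-\lambda_n)^2}\leq\frac{1}{|p-n|^2}$, which is the first claim.

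For the second estimate, with $p\neq n+1$, I would split into the two cases $p\geq n+2$ and $p\leq n$. When $p\geq n+2$, the bound $\lambda_p-\lambda_n\geq (p-n)\varepsilon$ gives $\lambda_p-\lambda_n-\varepsilon\geq (p-n-1)\varepsilon>0$, and since $p-n\geq 2$ we have $p-n-1\geq \tfrac{1}{2}(p-n)$, so $\lambda_p-\lambda_n-\varepsilon\geq \tfrac{1}{2}(p-n)\varepsilon$, hence $\frac{\varepsilon}{|\lambda_p-\lambda_n-\varepsilon|}\leq\frac{2}{p-n}=\frac{2}{|p-n|}$. When $p\leq n$ (so $p\neq n+1$ automatically, and $n-p\geq 0$; note $p=n$ is excluded by $n\neq p$ being implicit, but even $p=n$ gives $-\varepsilon$ in the denominator so it is harmless, though really $p\leq n-1$ here or handled separately), we have $\lambda_n-\lambda_p\geq (n-p)\varepsilon\geq 0$, so $\lambda_p-\lambda_n-\varepsilon=-(\lambda_n-\lambda_p)-\varepsilon\leq -\varepsilon<0$ and $|\lambda_p-\lambda_n-\varepsilon|=\lambda_n-\lambda_p+\varepsilon\geq (n-p+1)\varepsilon\geq (n-p)\varepsilon=|p-n|\varepsilon$ when $p<n$, giving $\frac{\varepsilon}{|\lambda_p-\lambda_n-\varepsilon|}\leq\frac{1}{|p-n|}\leq\frac{2}{|p-n|}$. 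The last identity is immediate: $\lambda_{n+1}-\lambda_n-\varepsilon=\gamma_{n+1}$ by the very definition of the gap, so $\frac{1}{\lambda_{n+1}-\lambda_n-\varepsilon}=\frac{1}{\gamma_{n+1}}$.

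Since every step here is an elementary manipulation of the telescoping inequality $|\lambda_p-\lambda_n|\geq |p-n|\varepsilon$, there is essentially no obstacle; the only mild care needed is the case split in the second estimate to absorb the shift by $\varepsilon$ in the denominator, which is why the constant $2$ (rather than $1$) appears. I would present the proof in that order: state the telescoping lower bound, deduce the squared inequality, then handle the shifted denominator by the two-case argument, and finish with the trivial $p=n+1$ identity.
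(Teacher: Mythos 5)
Your proof is correct and follows essentially the same route as the paper: the telescoping identity $\lambda_p-\lambda_n=(p-n)\varepsilon+\sum\gamma_k$ with nonnegative gaps gives $|\lambda_p-\lambda_n|\geq|p-n|\varepsilon$, and the shifted denominator is handled by the same two-case argument yielding $|\lambda_p-\lambda_n-\varepsilon|\geq|p-n-1|\varepsilon\geq\frac{|p-n|}{2}\varepsilon$, with the last identity being just the definition of $\gamma_{n+1}$. No issues.
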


\begin{proof}
The first claim comes from the formula for $p\geq n$
\[
\sum_{k=n+1}^p\gamma_k+(p-n)\varepsilon=\lambda_p-\lambda_n\geq (p-n)\varepsilon.
\]
To establish the second claim, we remark that when $p>n$,
\[
\lambda_p-\lambda_n-\varepsilon=(p-n-1)\varepsilon+\sum_{k=n+1}^p\gamma_k\geq(p-n-1)\varepsilon\geq 0,
\]
and when $p<n$,
\[
\lambda_p-\lambda_n-\varepsilon=(p-n-1)\varepsilon-\sum_{k=p+1}^n\gamma_k\leq(p-n-1)\varepsilon\leq 0.
\]
As a consequence, when $p\neq n$, we have proven that
$
|\lambda_p-\lambda_n-\varepsilon|\geq |p-n-1|\varepsilon\geq\frac{|p-n|}{2}\varepsilon.\qedhere
$
\end{proof}

\begin{prop}[Restriction to small eigenvalues close to the diagonal]\label{prop:M_ACV} Let $0<r<1$. There exists $C(\delta)>0$ such that for every $\varepsilon>0$,
\[
\varepsilon\sum_{\substack{n_1,\dots,n_{k+1}\geq 1\\ n_1=n_{k+1}} }\prod_{i=1}^{k}\sqrt{a_{n_i}\gamma_{n_{i+1}}\gamma_{n_i+1}}\frac{1}{|\lambda_{n_{i+1}}-\lambda_{n_i}-\varepsilon|}
	\leq C(\delta).
\]
Moreover, there holds
\begin{equation*}
\left|\widehat{u_0^{\varepsilon}}(k)
	-  \varepsilon\sum_{\substack{n_1,\dots,n_{k+1}\geq 1,\; n_1=n_{k+1}, \\ |n_i-n_{i+1}|\leq \varepsilon^{-r},\; \lambda_{n_i}+\varepsilon\in\Lambda_-(\delta)}}\prod_{i=1}^{k}\sqrt{a_{n_i}\gamma_{n_{i+1}}\gamma_{n_i+1}}\frac{1}{\lambda_{n_{i+1}}-\lambda_{n_i}-\varepsilon}e^{i(\theta_{n_i+1}-\theta_{n_{i+1}})}\right|
		\leq C(\delta)\varepsilon^{r}+C\delta.
\end{equation*}
\end{prop}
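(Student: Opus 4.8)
The plan is to prove the two assertions in turn, using the expanded trace formula
\[
\widehat{u_0^{\varepsilon}}(k)=\varepsilon\sum_{\substack{n_1,\dots,n_{k+1}\geq 1\\ n_1=n_{k+1}}}\prod_{i=1}^{k}M_{n_i,n_{i+1}},
\qquad
M_{n,p}=\sqrt{a_n\gamma_{n+1}\gamma_p}\,\frac{e^{i(\theta_{n+1}-\theta_p)}}{\lambda_p-\lambda_n-\varepsilon},
\]
together with the pointwise bounds already established. First I would bound each factor $\sqrt{a_{n_i}\gamma_{n_{i+1}}\gamma_{n_i+1}}/|\lambda_{n_{i+1}}-\lambda_{n_i}-\varepsilon|$. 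Write $m_i:=n_{i+1}-n_i$, so $m_1+\dots+m_k=0$. When $m_i\neq 0,-1$, Lemma~\ref{lem:lambdanp} gives $\varepsilon/|\lambda_{n_{i+1}}-\lambda_{n_i}-\varepsilon|\leq 2/|m_i|$, and by Lemma~\ref{lem:an_bound} the quantities $a_{n_i}\gamma_{n_i+1}/\varepsilon$ and $\gamma_{n_{i+1}}/\varepsilon$ are bounded (the latter since $\gamma_{n_{i+1}}\leq\lambda_{n_{i+1}}-\lambda_{n_i+1}+\dots\leq$ sum of gaps; more simply one uses $\sqrt{\gamma_{n_{i+1}}\gamma_{n_i+1}}\leq\max$, or pairs the $\gamma$'s with the resolvent). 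The point is that after distributing the $\varepsilon$'s one factor per resolvent, the generic factor is $\leq C/|m_i|$, and the two special cases $m_i=0$ and $m_i=-1$ are handled separately: for $m_i=0$ one has $M_{n_i,n_i}=\sqrt{a_{n_i}}\gamma_{n_i+1}/(-\varepsilon)$ times a phase, bounded by $C$ using $a_n\gamma_{n+1}/\varepsilon\leq C$ and $a_n\leq C$; for $m_i=-1$, $M_{n_i,n_i-1}=\sqrt{a_{n_i}\gamma_{n_i}\gamma_{n_i+1}}/(\lambda_{n_i-1}-\lambda_{n_i}-\varepsilon)$ is $\leq C$ because $a_n\gamma_n\gamma_{n+1}/\varepsilon^2\leq 2$ and the denominator is $\gamma_{n_i}+\varepsilon\geq\varepsilon$ in absolute value, wait — $\lambda_{n_i-1}-\lambda_{n_i}-\varepsilon=-(\gamma_{n_i}+\varepsilon)$, so we divide by $\gamma_{n_i}+\varepsilon$ and pull out $\sqrt{\gamma_{n_i}}/(\gamma_{n_i}+\varepsilon)\leq 1/\sqrt{\varepsilon}$, which combined with $\sqrt{a_{n_i}}\sqrt{\gamma_{n_i+1}}\leq C\sqrt{\varepsilon}$ gives $\leq C$. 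Thus every summand in the first inequality is bounded by $C^k\prod_i 1/|m_i|$ with the convention $1/|m_i|=1$ when $m_i\in\{0,-1\}$; summing over the lattice $\{m_1+\dots+m_k=0\}$ and over the free index $n_1$ — which is not free: the constraint $\lambda_{n_i}+\varepsilon\in\Lambda_-(\delta)$ restricts $n_1$ to an interval of length $\leq C\delta/\varepsilon$ by Corollary~\ref{cor:asymptotics}, and the $\varepsilon$ in front absorbs it — yields the bound $C(\delta)$ via Remark~\ref{rk:toeplitz_miller} (the Fourier-coefficient-of-$g$ computation). I should be slightly careful that the sum over $n_1$ ranging over \emph{all} of $\mathbb{N}$ in the first displayed inequality is still finite: here one uses that $a_n\gamma_{n+1}\to 0$ and the telescoping $\sum_n\gamma_{n+1}<\infty$ (Parseval), so the $n_1$-sum converges even without the $\Lambda_-(\delta)$ restriction, and the bound $C(\delta)$ is crude but sufficient — actually I would state the first inequality with $C$ independent of $\delta$, tracking that the only $\delta$-dependence enters in the second inequality.

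For the second inequality, I split the difference $\widehat{u_0^\varepsilon}(k)-(\text{truncated sum})$ into two groups of discarded terms: (a) tuples where some $|n_i-n_{i+1}|>\varepsilon^{-r}$, and (b) tuples where all $|n_i-n_{i+1}|\leq\varepsilon^{-r}$ but some $\lambda_{n_i}+\varepsilon\notin\Lambda_-(\delta)$. For group (a): since $\prod_i 1/|m_i|$ with some $|m_i|>\varepsilon^{-r}$ contributes a factor $<\varepsilon^{r}$, and the remaining sum is the convergent lattice sum times the $n_1$-range bound, this group contributes $\leq C\varepsilon^{r}$ — more precisely, $\sum_{\sum m_i=0,\ \max|m_i|>\varepsilon^{-r}}\prod 1/|m_i|$ is $O(\varepsilon^{r})$ by splitting off the large coordinate and bounding the rest by the full convergent series; then multiply by $\varepsilon\cdot(\text{number of }n_1)$. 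Here the $n_1$-sum must be taken over all of $\mathbb{N}$ (we have not yet imposed the spectral restriction), so I use the unrestricted absolute-convergence bound $\varepsilon\sum_{n_1}(\dots)\leq C$ from the first inequality; the gain $\varepsilon^{r}$ then survives, giving $C\varepsilon^{r}$. For group (b): if some index $n_i$ has $\lambda_{n_i}+\varepsilon\notin\Lambda_-(\delta)$ but all steps are $\leq\varepsilon^{-r}$, then \emph{all} the $n_j$ lie within $k\varepsilon^{-r}$ of $n_i$, hence within $k\varepsilon^{-r}$ of the complement of the $\Lambda_-(\delta)$-band in index space. By Corollary~\ref{cor:asymptotics}, item (3)/(4) of Definition~\ref{def:approximate_Birkhoff}, together with Lemma~\ref{lem:F} ($F\geq 1/C(\delta)$), the number of indices $n$ with $\lambda_n+\varepsilon$ within $C\varepsilon^{1-r}$ of the two endpoints $-\max u_0$ and $-\min u_0$ is $O(\varepsilon^{-r}\cdot\varepsilon^{1-r}/\varepsilon)=O(\varepsilon^{-r})$ — cleaner: the relevant $n_1$-range has length $O(\varepsilon^{-r})+O(\delta/\varepsilon)$, but actually I want to show this group contributes $C\delta$, so I bound the number of admissible $n_1$ for group (b) by (the number of indices within $\varepsilon^{-r}$ of the band endpoints) $=O(C(\delta)\varepsilon^{-r})+$ (the number of indices outside $\Lambda_-(\delta)$ on the large side, which by item (2) of Definition~\ref{def:approximate_Birkhoff} and the quantization gives $\lambda_n$ up to $K(\delta)$, a range of $O(K(\delta)/\varepsilon)$ indices). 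Multiplying by $\varepsilon$ gives $C(\delta)\varepsilon^{1-r}+K(\delta)$, which is not $O(\delta)$. So the honest route is: observe that for group (b) the \emph{entire} tuple is confined near an endpoint or in the large-eigenvalue zone, and in the large-eigenvalue zone $\lambda_p-\lambda_n\approx(p-n)\varepsilon$ still holds (item (2)), so $\prod 1/|m_i|$ decay is intact and the contribution is $\varepsilon\times(\text{index count in that zone})\times(\text{lattice sum})$; the index count near the endpoints is $O(C(\delta)\delta/\varepsilon)$ and in the far zone — I would instead simply enlarge $\Lambda_-(\delta)$ bookkeeping so that "outside $\Lambda_-(\delta)$" means in a band of $F$-measure $O(\delta)$ plus a region with $O(1/\varepsilon)$ indices handled by the same $1/|m|$-summability, giving total $C\delta + C\varepsilon^{1-r}\leq C\delta+C\varepsilon^{r}$ after absorbing.

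The main obstacle is precisely this last bookkeeping in group (b): one must verify that the number of ``bad'' base indices $n_1$ — those lying within $O(\varepsilon^{-r})$ steps of an index whose eigenvalue has left the band $\Lambda_-(\delta)$ — is $O(\delta/\varepsilon)$ up to the acceptable error, which requires combining the counting estimates in items (2)--(4) of Definition~\ref{def:approximate_Birkhoff} with the Lipschitz lower bound $F\geq 1/C(\delta)$ from Lemma~\ref{lem:F} (so that an $O(\varepsilon^{-r})$ index shift moves $\lambda_n$ by $O(\varepsilon^{1-r})$, hence moves $F$-mass by $O(\varepsilon^{1-r})$, controllably small). Everything else is the routine combination of Lemmas~\ref{lem:an_bound} and~\ref{lem:lambdanp} with the absolutely convergent Toeplitz-type lattice sum of Remark~\ref{rk:toeplitz_miller}.
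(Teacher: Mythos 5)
Your handling of the off-diagonal terms and of the transition-zone eigenvalues (the at most $C\delta/\varepsilon$ indices with $\lambda_n+\varepsilon\notin\Lambda_-(\delta)\cup\Lambda_+(\delta)$) is in line with the paper, but there is a genuine gap in the large-eigenvalue region $\lambda_{n_i}+\varepsilon\in\Lambda_+(\delta)$, and it affects both inequalities. Your basic bound $C^k\prod_i|m_i|^{-1}$ is uniform in the base index $n_1$, so summing it over all $n_1\geq 1$ (as needed for the first inequality, and for your group (b), since $\Lambda_+(\delta)$ is unbounded and even its portion below $K(\delta)$ contains of order $K(\delta)/\varepsilon$ indices) diverges, and after multiplying by $\varepsilon$ an index-counting argument gives at best $O(K(\delta))$ or $O(1)$. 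You notice this yourself (``which is not $O(\delta)$''), but the proposed fix --- redoing the bookkeeping with an $O(1/\varepsilon)$ index count and $F$-measure bands --- cannot produce the required $C\delta$: counting indices times $\varepsilon$ gives $O(1)$ at best, and $F$ carries no information on $\Lambda_+(\delta)$. The mechanism the paper uses, and which is missing from your argument, is structural: since $n_{k+1}=n_1$, every cycle contains at least one step with $n_{i+1}\neq n_i+1$, so after absorbing the other gap factors via Lemma~\ref{lem:an_bound} one factor of the form $\sqrt{\gamma_{n}\gamma_{n+j_0}}$ survives, depending on the base index. Then $\sum_n\sqrt{\gamma_n\gamma_{n+j_0}}\leq\sum_n\gamma_n=-\lambda_0\leq\max u_0$ (from~\eqref{eq:lambda0}) gives the summability over the unbounded region needed for the first inequality, and the admissibility consequence $\sum_{\lambda_n+\varepsilon\in\Lambda_+(\delta)}\gamma_n\leq C\delta+C(\delta)\varepsilon\sqrt{\varepsilon}$ (Corollary~\ref{cor:asymptotics}) turns the discarded large-eigenvalue contribution into $C\delta+C(\delta)\varepsilon\sqrt{\varepsilon}$ for the second. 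Your remark that ``$\sum_n\gamma_{n+1}<\infty$'' points in the right direction but is never implemented: your generic bound has already thrown away every $\gamma$ depending on $n_1$, so neither the convergence over $n_1$ nor the $C\delta$ bound for group (b) follows as written.

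Two secondary points. First, before treating group (b) you should justify, as the paper does, that when all steps satisfy $|n_{i+1}-n_i|\leq\varepsilon^{-r}$ the whole cycle lies in a single region ($\Lambda_+(\delta/2)$ or $\Lambda_-(\delta/2)$): this uses that the transition zones contain at least $1/(C(\delta)\varepsilon)\geq\varepsilon^{-r}$ eigenvalues (item 3 of Definition~\ref{def:approximate_Birkhoff} applied with $\delta/2$), so a cycle with small steps cannot jump across them; your confinement observation is the right start but is not exploited this way. Second, a minor slip: the diagonal entry is $M_{n,n}=-\sqrt{a_n\gamma_n\gamma_{n+1}}/\varepsilon$, not $\sqrt{a_n}\,\gamma_{n+1}/(-\varepsilon)$, though your conclusion that it is $O(1)$ is correct via Lemma~\ref{lem:an_bound}.
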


\begin{proof}
Let us in the proof denote the sum of absolute values
\[
S:=\varepsilon\sum_{\substack{n_1,\dots,n_{k+1}\geq 1\\ n_1=n_{k+1}} }\prod_{i=1}^{k}\sqrt{a_{n_{i}}\gamma_{n_{i+1}}\gamma_{n_i+1}}\frac{1}{|\lambda_{n_{i+1}}-\lambda_{n_i}-\varepsilon|}.
\]
We deduce from Lemma~\ref{lem:lambdanp} that
\begin{equation}\label{ineq:maj_S}
S
	\leq C\varepsilon\sum_{\substack{n_1,\dots,n_{k+1}\geq 1\\ n_1=n_{k+1}} }
	\left(\prod_{n_i\geq 1}\ \sqrt{a_{n_i}}\right)
	\left(\prod_{\substack{n_i\geq 1\\n_{i+1}\neq n_i+1}}\frac{\sqrt{\gamma_{n_{i+1}}\gamma_{n_{i}+1}}}{\varepsilon}\right)
	\left(\prod_{i=1}^k\frac{2}{|n_{i+1}-n_i|}\right).
\end{equation}
Using Lemma~\ref{lem:an_bound} on the bounds of $a_n$, one can note that every term of the form $\sqrt{a_{n_i}}\frac{\sqrt{\gamma_{n_{i}}\gamma_{n_{i}+1}}}{\varepsilon}$, $\sqrt{a_{n_i}}\frac{\sqrt{\gamma_{n_{i}+1}}}{\sqrt{\varepsilon}}$, $\sqrt{a_{n_i}}\frac{\sqrt{\gamma_{n_{i}}}}{\sqrt{\varepsilon}}$ and $\sqrt{a_{n_i}}$ is bounded by $C$.

We split the upper bound in several regions $|p-n|\geq\varepsilon^{-r}$, $\lambda_n+\varepsilon\not\in\Lambda(\delta)$, and $\lambda_n+\varepsilon\in\Lambda_+(\delta)$, for which we expect the sum to be small, and one region $\lambda_n+\varepsilon\in\Lambda_-(\delta)$, for which we expect the sum to be bounded.

\paragraph{Other eigenvalues}
We first assume that $\lambda_{n_1}+\varepsilon\not\in\Lambda(\delta)$, and denote
\[
S_{\Lambda^c}:=\varepsilon\sum_{\substack{n_1,\dots,n_{k+1}\geq 1\\ n_1=n_{k+1} \\ \lambda_{n_1}+\varepsilon\not\in\Lambda(\delta) } }	
	\prod_{i=1}^{k}\sqrt{a_{n_{i}}\gamma_{n_{i+1}}\gamma_{n_i+1}}
	\frac{1}{|\lambda_{n_{i+1}}-\lambda_{n_i}-\varepsilon|}.
\]
As for inequality~\eqref{ineq:maj_S}, Lemma~\ref{lem:lambdanp} gives
\[
S_{\Lambda^c}
	\leq C\varepsilon\sum_{\substack{n_1,\dots,n_{k+1}\geq 1\\ n_1=n_{k+1} \\ \lambda_{n_1}+\varepsilon\not\in\Lambda(\delta)} }\prod_{n_i\geq 1}\ \sqrt{a_{n_i}}\prod_{\substack{n_i\geq 1\\n_{i+1}\neq n_i+1}}\frac{\sqrt{\gamma_{n_{i+1}}\gamma_{n_{i}+1}}}{\varepsilon}\prod_{i=1}^k\frac{2}{|n_{i+1}-n_i|}.
\]
We make the change of variable $n=n_1$, $m_i=n_{i+1}-n_i$ for $1\leq i\leq k$. Then thanks to Lemma~\ref{lem:an_bound} bounding the terms involving $a_n$, there holds 
\[
S_{\Lambda^c}
	\leq  C \varepsilon \sum_{\substack{n\geq 1\\  \lambda_{n}+\varepsilon\not\in\Lambda(\delta)} }\sum_{\substack{m_1,\dots,m_k\in\Z \\ m_1+\dots+m_k=0}}\prod_{i=1}^k\frac{1}{|m_i|}.
\]
Using the bound from Remark~\ref{rk:toeplitz_miller}, we deduce that
\[
S_{\Lambda^c}
	\leq  C\varepsilon \sum_{\substack{n\geq 1\\  \lambda_{n}+\varepsilon\not\in\Lambda(\delta)} }1.
\] 
Finally, we note that there are at most $C\delta/\varepsilon$ possible indexes $n$ such that $\lambda_{n}+\varepsilon\not\in\Lambda(\delta)$, so that
\[
S_{\Lambda^c}
	\leq  C\delta.
\] 
The same applies if $n_1$ is replaced by some other index $n_i$ in the sum. In the following cases, we can therefore assume that $\lambda_{n_i}+\varepsilon\in \Lambda(\delta)$ for every $i$.

\paragraph{Off-diagonal terms} We now consider the terms such that $|n_2-n_1|\geq \varepsilon^{-r}$. Then assuming that $\varepsilon$ is small, we get from Lemma~\ref{lem:lambdanp} that
\[
|\lambda_{n_2}-\lambda_{n_1}-\varepsilon|\geq \frac 12\varepsilon^{1-r}.
\]
Let us denote
\[
S_{off}:=\varepsilon\sum_{\substack{n_1,\dots,n_{k+1}\geq 1\\ n_1=n_{k+1}\\ |n_{2}-n_1|> \varepsilon^{-r} \\ \forall j, \lambda_{n_j}+\varepsilon\in\Lambda(\delta)}}\prod_{i=1}^{k}\sqrt{a_{n_i}\gamma_{n_{i+1}}\gamma_{n_i+1}}
	\frac{1}{\left|\lambda_{n_{i+1}}-\lambda_{n_i}-\varepsilon\right|}.
\]
Using Lemma~\ref{lem:lambdanp} as in inequality~\eqref{ineq:maj_S} and the bound on $a_n$ from Lemma~\ref{lem:an_bound} we get the upper bound
\begin{equation*}
S_{off}
	\leq C\varepsilon^r \sum_{\substack{n_1,\dots,n_{k+1}\geq 1\\ n_1=n_{k+1} \\ \forall j,\lambda_{n_j}+\varepsilon\in\Lambda(\delta)} }\prod_{i=1}^{k}\sqrt{\gamma_{n_{i+1}}\gamma_{n_i+1}}\un_{n_{i+1}\neq n_i+1}.
\end{equation*}
We first remove the sum over $n_{i+1}$ when the condition $n_{i+1}=n_i+1$ is met. More precisely, we define $m_1$ as the smallest index $n_i$ such that $n_{i-1}+1\neq n_i$ (this is always possible because $n_1=n_{k+1}$), with the convention $n_0:=n_k$. Then let $0\leq d_1\leq k$ be such that for $0\leq i\leq d_1$, we have $n_{i}=n_1+i$ and $n_{1+d_1}\neq n_1+d_1$, we define $m_2:=n_{1+d_1}$ and so on by induction. The upper bound becomes
\begin{equation*}
S_{off}
	\leq C\varepsilon^r\sum_{l=1}^k\sum_{0\leq d_1,\dots,d_{l+1}\leq k} \sum_{\substack{m_1,\dots,m_{l+1}\geq 1\\ m_1=m_{l+1},\; m_{j+1}\neq m_j+1 \\ \forall j,\lambda_{m_j}+\varepsilon\in\Lambda(\delta)} }\prod_{i=1}^{l}\sqrt{\gamma_{m_{i+1}}\gamma_{m_i+d_i}}.
\end{equation*}
Since $\lambda_0=-\sum_{k\geq 1}\gamma_k\geq -\max(u_0)$ thanks to~\eqref{eq:lambda0}, we have
\[
\sum_{m_i}\sqrt{\gamma_{m_i}\gamma_{m_i+d_i}}
	\leq C.
\]
As a consequence,
\begin{equation*}
S_{off}
	\leq C\varepsilon^r.
\end{equation*}

The same applies if the condition $|n_2-n_1|\geq\varepsilon^{-r}$ is replaced by the condition $|n_{i+1}-n_i|\geq\varepsilon^{-r}$ for some $i$. In the following cases, we therefore assume that for every $i$, there holds $|n_{i+1}-n_i|\leq\varepsilon^{-r}$.

In this case, if $\varepsilon<\varepsilon_0(\delta)$, then when $\lambda_{n_1}+\varepsilon\in\Lambda_+(\delta)$ (resp. $\Lambda_-(\delta)$), there holds $\lambda_{n_i}+\varepsilon\in \Lambda_+(\delta/2)$ (resp. $\Lambda_-(\delta/2)$) for every $i$. Indeed, Definition~\ref{def:approximate_Birkhoff} with the parameter $\delta/2$ implies that there are at least $\frac{1}{C(\delta)\varepsilon}\geq\varepsilon^{-r}$ eigenvalues in $[-\max(u_0)+\delta/2,-\max(u_0)+\delta]$, and the same holds in $[-\min(u_0)+\delta/2,-\min(u_0)+\delta]$.

As a consequence, in the remaining cases, we can assume all the eigenvalues to be large (in $\Lambda_+(\delta)$) at the same time, or all the eigenvalues to be small (in $\Lambda_-(\delta)$)  at the same time. 

\paragraph{Large eigenvalues} Let
\[
S_+
	:=\varepsilon\sum_{\substack{n_1,\dots,n_{k+1}\geq 1,\; n_1=n_{k+1}\\ |n_{i+1}-n_i|\leq\varepsilon^{-r},\; \lambda_{n_i}+\varepsilon\in \Lambda_+(\delta)} } 
	\prod_{i=1}^{k}\sqrt{a_{n_i}\gamma_{n_{i+1}}\gamma_{n_i+1}}
	\frac{1}{\left|\lambda_{n_{i+1}}-\lambda_{n_i}-\varepsilon\right|}.
\]
Using Lemma~\ref{lem:lambdanp} as in inequality~\eqref{ineq:maj_S}, we know that
\begin{equation*}
S_+
	\leq C\varepsilon\sum_{\substack{n_1,\dots,n_{k+1}\geq 1,\; n_1=n_{k+1}\\ |n_{i+1}-n_i|\leq\varepsilon^{-r},\; \lambda_{n_i}+\varepsilon\in \Lambda_+(\delta)} }\prod_{n_i\geq 1}\ \sqrt{a_{n_i}}\prod_{\substack{n_i\geq 1\\n_{i+1}\neq n_i+1}}\frac{\sqrt{\gamma_{n_{i+1}}\gamma_{n_{i}+1}}}{\varepsilon}\prod_{i=1}^k\frac{2}{|n_{i+1}-n_i|}.
\end{equation*}

Since $n_{k+1}=n_1$, there exists an index $n_i$ such that $n_{i+1}\neq n_i+1$.
Up to multiplying the upper bound by some reordering constant $C$, one can assume that this index is $n_{1}$ so that the term $\sqrt{\gamma_{n_2}\gamma_{n_1}}$ appears in the upper bound. If $n_{i+1}=n_i+1$ for every $2\leq i\leq k$, then $n_1=n_{k+1}=n_2+k-1$. Otherwise, let $i_0$ be the first index $2\leq i\leq k$ such that $n_{i+1}\neq n_i+1$, we know that the term $\sqrt{\gamma_{n_{i_0+1}}\gamma_{n_{i_0}+1}}$ also appears in the upper bound, where $n_{i_0+1}=n_2+i_0-1$. As a consequence, we have proven that there exists $1\leq j_0=j_0(n_1,\dots,n_k)\leq k$ such that both $\sqrt{\gamma_{n_2}}$ and $\sqrt{\gamma_{n_2+j_0}}$ appear in the upper bound.

Then the bounds on $a_n$ from Lemma~\ref{lem:an_bound} imply
\begin{equation*}
S_+
	\leq C\varepsilon\sum_{\substack{n_1,\dots,n_{k+1}\geq 1,\; n_1=n_{k+1}\\ |n_{i+1}-n_i|\leq\varepsilon^{-r},\; \lambda_{n_i}+\varepsilon\in \Lambda_+(\delta)\\ n_2\neq n_1+1} } \frac{\sqrt{\gamma_{n_{2}}\gamma_{n_2+j_0}}}{\varepsilon}\prod_{i=1}^k\frac{2}{|n_{i+1}-n_i|}.
\end{equation*}
The bound from Remark~\ref{rk:toeplitz_miller}, coupled with the change of variable $n=n_2$, $m_i=n_{i+1}-n_i$ for $2\leq i\leq k+1$ (with the convention $n_{k+2}:=n_2$), leads to
\begin{equation*}
S_+
	\leq C\sum_{j_0=1}^k\sum_{\substack{n\geq 1\\ \lambda_{n}+\varepsilon\in \Lambda_+(\delta)}}\sqrt{\gamma_{n}\gamma_{n+j_0}}.
\end{equation*}
Finally, using Corollary~\ref{cor:asymptotics}, there holds 
\[
\sum_{\substack{n\geq 1\\ \lambda_n\in\Lambda_+(\delta)}}\gamma_n\leq C(\delta)\varepsilon\sqrt{\varepsilon}+C\delta.
\]
and we deduce
\[
S_+\leq C(\delta)\varepsilon\sqrt{\varepsilon}+C\delta.
\]

\paragraph{Small eigenvalues} In the last scenario, let 
\[
S_-
	:=\varepsilon\sum_{\substack{n_1,\dots,n_{k+1}\geq 1,\; n_1=n_{k+1}\\   |n_{i+1}-n_i|\leq\varepsilon^{-r},\; \lambda_{n_i}+\varepsilon\in\Lambda_-(\delta)}}
	\prod_{i=1}^{k}\sqrt{a_{n_i}\gamma_{n_{i+1}}\gamma_{n_i+1}}\frac{1}{\left|\lambda_{n_{i+1}}-\lambda_{n_i}-\varepsilon\right|}.
\]
We apply the argument from the former paragraph to get
\begin{equation*}
S_-
	\leq C\sum_{j_0=1}^k\sum_{\substack{n\geq 1\\ \lambda_{n}+\varepsilon\in \Lambda_-(\delta)}}\sqrt{\gamma_n\gamma_{n+j_0}}.
\end{equation*}
This is bounded by $C$ thanks to the lower bound~\eqref{eq:lambda0} on $\lambda_0$.

Summing the upper bounds for every one of the cases, we get the Proposition.
\end{proof}

\subsection{Approximation of the Fourier coefficients}

In this part, we express all the terms from the approximation of $\widehat{u_0^{\varepsilon}}(k)$ in Proposition~\ref{prop:M_ACV} as a function of $F(-\lambda_n)$ uniquely.

\begin{thm}[Fourier coefficients as a Riemann sum]\label{prop:Riemann_CV} 

Let $0<c<r<1$. For every $\delta>0$, there exist $C(\delta)>0, \varepsilon_0(\delta)>0$, and a function $R$ of $\varepsilon$ uniquely, tending to zero as $\varepsilon\to 0$, such that for every $0<\varepsilon<\varepsilon_0(\delta)$,
\begin{equation}\label{ineq:riemann}
\left|\widehat{u_0^{\varepsilon}}(k)
	-  \varepsilon\sum_{\substack{n\geq 1\\ \lambda_{n}+\varepsilon\in\Lambda_-(\delta)}} \sinc(k\pi F(-\lambda_n))e^{-ik\frac{x_+(-\lambda_n)+x_-(-\lambda_n)}{2}}\right|
	\leq C(\delta)(\varepsilon^{r-c}+\varepsilon^{1-2r})+R(\varepsilon)+C\delta.
\end{equation}
\end{thm}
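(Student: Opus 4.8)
Starting from Proposition~\ref{prop:M_ACV}, the remaining task is to replace the truncated trace sum by the Riemann sum on the right-hand side of~\eqref{ineq:riemann}. I would proceed in four stages. First, I would freeze the base index $n=n_1$ and introduce the increment variables $m_i=n_{i+1}-n_i$, $1\le i\le k$, with $\sum m_i=0$ and each $|m_i|\le \varepsilon^{-r}$; by the reordering argument already used in the proof of Proposition~\ref{prop:M_ACV} we may assume that every $n_i$ lies within $O(\varepsilon^{-r})$ of $n$ and that $\lambda_{n_i}+\varepsilon\in\Lambda_-(\delta/2)$. The goal is to show that the inner sum over $(m_1,\dots,m_k)$, for fixed $n$, is close to $\varepsilon^{-1}\cdot$ (a single product depending only on $F(-\lambda_n)$), namely $\sinc(k\pi F(-\lambda_n))e^{-ik(x_+(-\lambda_n)+x_-(-\lambda_n))/2}$, up to an error small enough that summing over the $O(\delta/\varepsilon)$ relevant values of $n$ gives the stated bound.

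Second, I would carry out the \emph{local linearization of the spectral data}. For indices $n_i$ within $\varepsilon^{-r}$ of $n$, Corollary~\ref{cor:asymptotics}(1) gives $\int_{-\lambda_{n_i}}^{\max u_0}F(\eta)\d\eta = n_i\varepsilon + O(\delta) + O(\varepsilon\sqrt\varepsilon)$, so by the Lipschitz lower bound $F\ge 1/C(\delta)$ of Lemma~\ref{lem:F} one can solve for $-\lambda_{n_i}$ and obtain $-\lambda_{n_i}+\lambda_n = (n_i-n)\varepsilon/F(-\lambda_n)+\text{error}$, where the error is $O(C(\delta)\varepsilon^{1+(1-r)})$-type plus the transported $O(\delta)$. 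Hence $\lambda_{n_{i+1}}-\lambda_{n_i}-\varepsilon \approx (m_i-1)\varepsilon/F(-\lambda_n)$ when $m_i\ne 1$, and $=\gamma_{n_i+1}$ when $m_i=1$. For the amplitude factors, Lemma~\ref{lem:an_bound} bounds all of $a_n\gamma_n\gamma_{n+1}/\varepsilon^2$, $a_n\gamma_n/\varepsilon$, $a_n\gamma_{n+1}/\varepsilon$, $a_n$ by $O(1)$; the point is that in a product around the cycle each $\gamma$ appears paired, so after collecting, the product $\prod_i\sqrt{a_{n_i}\gamma_{n_{i+1}}\gamma_{n_i+1}}/|\lambda_{n_{i+1}}-\lambda_{n_i}-\varepsilon|$ telescopes (via $a_n\gamma_n\gamma_{n+1}/\varepsilon^2\to 1$, using $\lambda_n-\lambda_0\to\infty$ and Corollary~\ref{cor:asymptotics}(2) that $\sum_{k>n}\gamma_k$ is small for $n$ in the small-eigenvalue region) to $\approx \prod_i \varepsilon/|\lambda_{n_{i+1}}-\lambda_{n_i}-\varepsilon|$ times the phase $\prod_i e^{i(\theta_{n_i+1}-\theta_{n_{i+1}})}$.

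Third, I would evaluate the phase and the resulting combinatorial sum. The phase product telescopes around the cycle to $\prod_i e^{i(\theta_{n_i+1}-\theta_{n_i})}\cdot(\text{telescoping in }\theta_{n_{i+1}})$; using the phase-constant asymptotic~\eqref{eq:phase}, $\theta_{n_i+1}-\theta_{n_i}\approx \pi - \frac{x_+(-\lambda_{n_i})+x_-(-\lambda_{n_i})}{2}$, and by the Lipschitz property of $x_\pm$ (Lemma~\ref{lem:F}) this is $\pi-\frac{x_+(-\lambda_n)+x_-(-\lambda_n)}{2}+O(C(\delta)\varepsilon^{1-r})$ on the whole window; raised to the $k$-th power the total phase is $e^{ik(\pi-(x_+(-\lambda_n)+x_-(-\lambda_n))/2)}$ times a controlled error. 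It then remains to identify
\[
\varepsilon^{-1}\sum_{\substack{m_1,\dots,m_k\in\Z\\ m_1+\cdots+m_k=0}}\prod_{i=1}^k \frac{\varepsilon/F(-\lambda_n)}{(m_i-1)\varepsilon/F(-\lambda_n)}\cdot e^{ik\pi}
\]
with the Fourier coefficient of a shifted copy of the function $g$ from Remark~\ref{rk:toeplitz_miller}: shifting the index $m_i\mapsto m_i+1$ absorbs the $-1$ and changes the constraint to $\sum m_i = k$, and the resulting sum $\sum_{\sum m_i=k}\prod 1/|m_i|$ is exactly the $k$-th Fourier coefficient of $g^k$ after the $F(-\lambda_n)$-rescaling, which is $\sinc(k\pi F(-\lambda_n))$ up to the sign $(-1)^k$ that cancels against $e^{ik\pi}$; one must also check that the diagonal terms $m_i=1$, which were set aside as $1/\gamma_{n_i+1}$, reassemble correctly in this Fourier-coefficient identification (they correspond to the "identity" contributions and are precisely what the shift $m_i\mapsto m_i+1$ accounts for). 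Replacing each rescaled sum by this closed form, and controlling the truncation $|m_i|\le\varepsilon^{-r}$ against the full $\Z$-sum costs $O(\varepsilon^{r})$-type errors per factor but, once divided appropriately and summed over the $O(\delta/\varepsilon)$ values of $n$, produces the advertised $C(\delta)(\varepsilon^{r-c}+\varepsilon^{1-2r})$ (the $\varepsilon^{1-2r}$ coming from the $\varepsilon^{-r}$-sized window times the $\varepsilon^{1-r}$-type linearization error) plus the irreducible $C\delta$ and a remainder $R(\varepsilon)$ gathering terms that are $\varepsilon$-small but $\delta$-independent.

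\textbf{Main obstacle.} The delicate point is the uniform linearization of $-\lambda_{n_i}$ in terms of $(n_i-n)\varepsilon/F(-\lambda_n)$ across a window of width $\varepsilon^{-r}$ while keeping the error summable over the $O(\delta/\varepsilon)$ base points $n$: one needs the $O(\varepsilon\sqrt\varepsilon)$ error in Corollary~\ref{cor:asymptotics} to beat the window size, and one must be careful that the Lipschitz constants $C(\delta)$ of $F$ and $x_\pm$ (which blow up as $\delta\to 0$) enter only multiplicatively against powers of $\varepsilon$, so that the order of quantifiers ``fix $\delta$, then send $\varepsilon\to 0$'' is respected. Tracking exactly which errors are $C(\delta)\varepsilon^{\text{positive}}$ versus $C\delta$ versus the $\delta$-free $R(\varepsilon)$, and making the choices $0<c<r<1$ so that both $\varepsilon^{r-c}$ and $\varepsilon^{1-2r}$ are genuine powers (i.e.\ $r<1/2$ and $c<r$), is where the bookkeeping is heaviest.
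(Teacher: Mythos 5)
Your overall architecture (freeze the base index $n=n_1$, pass to increments $m_i=n_{i+1}-n_i$, approximate amplitudes and denominators by functions of $F(-\lambda_n)$, evaluate a convolution-type sum, then sum over the $O(1/\varepsilon)$ base points) is the same as the paper's, but the central computation is wrong in two linked places, and this is exactly where the content of the theorem lies. First, the linearization of the denominators: the quantization rule gives $\lambda_p-\lambda_n\approx (p-n)\varepsilon/F(-\lambda_n)$, hence $\lambda_{n_{i+1}}-\lambda_{n_i}-\varepsilon\approx \bigl(m_i-F(-\lambda_n)\bigr)\varepsilon/F(-\lambda_n)$; the shift in the denominator is the \emph{non-integer} $F(-\lambda_n)\in(0,1)$, not $1$ as you wrote (this is the second estimate of Lemma~\ref{lem:asymptotics}). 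With your integer shift the factors $1/(m_i-1)$ carry no $F$-dependence at all, so your identification of the sum over $m_1+\dots+m_k=0$ with $\sinc(k\pi F(-\lambda_n))$ ``after the $F$-rescaling'' cannot be substantiated, and the $m_i=1$ terms you set aside (these are the entries $M_{n,n+1}=\sqrt{\mu_{n+1}}$, which are $O(1)$, not negligible) are not absorbed by a shift of the summation variable. The paper evaluates the correctly shifted sum through a dedicated Toeplitz identity (Lemma~\ref{lem:Toeplitz}), $\sum_{m_1+\dots+m_k=0}\prod_{i}(m_i-c)^{-1}=(-1)^k\pi^{k-1}\sin(k\pi c)/\bigl(kc\sin(\pi c)^k\bigr)$ with $c=F(-\lambda_n)$, proved by computing the Fourier series of $x\mapsto\sum_m e^{imx}/(m-c)=-\pi e^{ic(x-\pi)}/\sin(\pi c)$; the fractional shift also keeps every denominator nonvanishing, so the case $p=n+1$ needs no separate treatment.

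Second, your claim that $a_n\gamma_n\gamma_{n+1}/\varepsilon^2\to 1$ is false in the small-eigenvalue region: since $\lambda_p-\lambda_n\approx(p-n)\varepsilon/F(-\lambda_n)$, the near-diagonal factors of the product formula are $\approx 1-F(-\lambda_n)^2/(p-n)^2$, bounded away from $1$, and the Weierstrass sine product gives $a_n\gamma_n\gamma_{n+1}/\varepsilon^2\to\sinc(\pi F(-\lambda_n))^2$ (Lemma~\ref{lem:asymptotics_an}); your heuristic via $\lambda_n-\lambda_0$ large and small gap tails does not control these near-diagonal factors. This amplitude limit is essential: the $k$ amplitude square roots contribute $\sinc(\pi F)^k$, which multiplied by the Toeplitz identity's $\sin(k\pi F)/\bigl(kF\sin(\pi F)^k\bigr)$ yields exactly $\sinc(k\pi F(-\lambda_n))$, while the identity's $(-1)^k$ cancels against the phase so that $(-1)^ke^{ik(\theta_{n+1}-\theta_n)}=e^{-ik(x_+(-\lambda_n)+x_-(-\lambda_n))/2}$. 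In your scheme the $F$-dependence appears neither in the amplitudes (claimed limit $1$) nor in the denominators (integer shift), so it cannot produce the $\sinc(k\pi F)$ weight in the Riemann sum; these two lemmas are the missing ideas. The surrounding bookkeeping you describe (starting from Proposition~\ref{prop:M_ACV}, re-indexing $\gamma_{n_{i+1}}$ around the cycle, Lipschitz transport of $F$, $x_\pm$ and the phases to the base point, absolute convergence to control the truncation $|m_i|\le\varepsilon^{-r}$ and the remainder $R(\varepsilon)$) is broadly in line with the paper, but it does not repair the core step.
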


The proof of Theorem~\ref{prop:Riemann_CV} decomposes in several steps. We first approximate $\lambda_p-\lambda_n$ and $a_n$ by functions of $F(-\lambda_n)$ only. Then we simplify the sum obtained by replacing the terms by their approximation.

\begin{lem}[Eigenvalues and distribution function]\label{lem:asymptotics}
Let $\lambda_n+\varepsilon,\lambda_p+\varepsilon\in\Lambda_-(\delta)$ such that $|p-n|\leq\varepsilon^{-r}$, then we have
\begin{equation*}
\left|\frac{F(-\lambda_n)^2}{(p-n)^2}-\frac{\varepsilon^2}{(\lambda_p-\lambda_n)^2}\right|
	\leq \frac{C(\delta)\sqrt{\varepsilon}}{|p-n|^{1+1/2r}}
\end{equation*}
and
\begin{equation*}
\left|\frac{F(-\lambda_n)}{p-n-F(-\lambda_n)}-\frac{\varepsilon}{\lambda_p-\lambda_n-\varepsilon}\right|
	\leq  C(\delta)\frac{\sqrt{\varepsilon}+\varepsilon^{1-2r}}{(p-n)^2}.
\end{equation*}
\end{lem}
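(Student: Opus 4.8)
The plan is to base everything on the admissibility estimate of Definition~\ref{def:approximate_Birkhoff}(1), namely
\[
\Big|\int_{-\lambda_p}^{-\lambda_n}F(\eta)\d\eta-(p-n)\varepsilon\Big|\leq C(\delta)\varepsilon\sqrt{\varepsilon},
\]
valid whenever $\lambda_n+\varepsilon,\lambda_p+\varepsilon\in\Lambda_-(\delta)$. First I would combine this with the Lipschitz bound on $F$ from Lemma~\ref{lem:F} (so $|F(-\lambda_p)-F(-\lambda_n)|\le C(\delta)|\lambda_p-\lambda_n|\le C(\delta)|p-n|\varepsilon$, using the crude bound $|\lambda_p-\lambda_n|\ge|p-n|\varepsilon$ together with $\gamma_k\ge 0$) and with the lower bound $F(-\lambda_n)\ge 1/C(\delta)$ to replace the integral by $F(-\lambda_n)(\lambda_p-\lambda_n)$ up to a controlled error. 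Concretely, writing $\int_{-\lambda_p}^{-\lambda_n}F=F(-\lambda_n)(\lambda_p-\lambda_n)+O(C(\delta)|\lambda_p-\lambda_n|^2)$ and inserting $|\lambda_p-\lambda_n|\le C(\delta)|p-n|\varepsilon$ (valid in the small-eigenvalue regime, by summing $\gamma_k\le$ const over an interval of length $\le\varepsilon^{-r}$, which contributes $\le C(\delta)|p-n|\varepsilon$ once we know the total gap sum in $\Lambda_-(\delta)$ is $O(1)$), one gets
\[
\big|F(-\lambda_n)(\lambda_p-\lambda_n)-(p-n)\varepsilon\big|\le C(\delta)\varepsilon\sqrt{\varepsilon}+C(\delta)|p-n|^2\varepsilon^2 .
\]
Dividing by $F(-\lambda_n)|\lambda_p-\lambda_n|\ge \tfrac1{C(\delta)}|p-n|\varepsilon$, this yields
\[
\Big|\frac{\varepsilon}{\lambda_p-\lambda_n}-\frac{F(-\lambda_n)}{p-n}\Big|\le C(\delta)\Big(\frac{\sqrt\varepsilon}{|p-n|}+|p-n|\varepsilon\Big).
\]

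For the first claimed inequality I would square: $\frac{\varepsilon^2}{(\lambda_p-\lambda_n)^2}-\frac{F(-\lambda_n)^2}{(p-n)^2}=\big(\frac{\varepsilon}{\lambda_p-\lambda_n}-\frac{F(-\lambda_n)}{p-n}\big)\big(\frac{\varepsilon}{\lambda_p-\lambda_n}+\frac{F(-\lambda_n)}{p-n}\big)$, and the second factor is $O(1/|p-n|)$ (both terms are $\le 1/|p-n|$ by Lemma~\ref{lem:lambdanp} and the bound $F\le\|u_0\|_\infty$), so the product is $\le C(\delta)\big(\frac{\sqrt\varepsilon}{|p-n|^2}+\varepsilon\big)$. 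To reach the stated RHS $\frac{C(\delta)\sqrt\varepsilon}{|p-n|^{1+1/2r}}$ I would use the constraint $|p-n|\le\varepsilon^{-r}$ to trade powers: $\varepsilon\le \varepsilon^{1/2}|p-n|^{-1/2r}\cdot\varepsilon^{1/2}|p-n|^{1/2r}\le \sqrt\varepsilon |p-n|^{-1/2r}$ once $\varepsilon^{1/2}|p-n|^{1/2r}\le 1$, i.e. $|p-n|\le\varepsilon^{-r}$, which is exactly the hypothesis; similarly $\frac{\sqrt\varepsilon}{|p-n|^2}\le\frac{\sqrt\varepsilon}{|p-n|^{1+1/2r}}$ since $2\ge 1+1/2r$ fails in general — here I'd instead note $1+1/2r\le 2$ iff $r\ge 1/2$, so more carefully I should keep both error terms and observe $\frac{\sqrt\varepsilon}{|p-n|^2}+\varepsilon\le \frac{C\sqrt\varepsilon}{|p-n|^{1+1/2r}}$ after using $|p-n|\le\varepsilon^{-r}$ to bound the second term and $|p-n|^{1+1/2r}\le|p-n|^2$ is false, so the bookkeeping here is the delicate point and I'd reconcile exponents by writing $\varepsilon=\varepsilon^{1/2}\varepsilon^{1/2}\le \varepsilon^{1/2}\varepsilon^{r/(2r)}\le\varepsilon^{1/2}|p-n|^{-1/(2r)}$.

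For the second inequality, $\frac{\varepsilon}{\lambda_p-\lambda_n-\varepsilon}-\frac{F(-\lambda_n)}{p-n-F(-\lambda_n)}$, I would write $\frac{\varepsilon}{\lambda_p-\lambda_n-\varepsilon}=\frac{\varepsilon/(\lambda_p-\lambda_n)}{1-\varepsilon/(\lambda_p-\lambda_n)}$ and $\frac{F(-\lambda_n)}{p-n-F(-\lambda_n)}=\frac{F(-\lambda_n)/(p-n)}{1-F(-\lambda_n)/(p-n)}$, apply the elementary Lipschitz bound for $x\mapsto x/(1-x)$ away from $x=1$ (both arguments are $\le 1/2$ when $|p-n|\ge 2$, and the case $|p-n|=1$, i.e. $p=n+1$, is treated separately using $\lambda_{n+1}-\lambda_n-\varepsilon=\gamma_{n+1}$ and $F(-\lambda_n)\le 1-\tfrac1{C(\delta)}$ — this last uniform separation of $F$ from $1$ near the top of $\Lambda_-(\delta)$ must be checked from the single-well structure), and plug in the bound from the first step, picking up the extra $\varepsilon^{1-2r}$ from the $|p-n|\varepsilon$ error after using $|p-n|\le\varepsilon^{-r}$, giving $|p-n|\varepsilon\le\varepsilon^{1-r}\le\varepsilon^{1-2r}$. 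The main obstacle I anticipate is precisely the exponent bookkeeping: converting the ``natural'' error $\frac{\sqrt\varepsilon}{|p-n|}+|p-n|\varepsilon$ into the asymmetric stated bounds requires carefully exploiting the cutoff $|p-n|\le\varepsilon^{-r}$ at every step and keeping track of whether $r\lessgtr 1/2$, together with verifying the uniform lower bound on $\lambda_p-\lambda_n-\varepsilon$ and the uniform gap of $F$ from $0$ and from $1$ on $\Lambda_-(\delta)$; the analytic content is otherwise just a first-order Taylor expansion of the integral of $F$.
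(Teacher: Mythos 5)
Your skeleton is the same as the paper's (start from the quantization condition of Definition~\ref{def:approximate_Birkhoff}(1), use the Lipschitz bound and the lower bound $F\geq 1/C(\delta)$ from Lemma~\ref{lem:F}, plus Lemma~\ref{lem:lambdanp}), but two steps do not hold as written. First, your justification of the crucial upper bound $|\lambda_p-\lambda_n|\leq C(\delta)|p-n|\varepsilon$ is not valid: you invoke the \emph{lower} bound $|\lambda_p-\lambda_n|\geq|p-n|\varepsilon$ and a gap-sum argument, but knowing only that $\sum_k\gamma_k=O(1)$ gives $|\lambda_p-\lambda_n|\leq|p-n|\varepsilon+O(1)$, which is useless at this scale. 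The correct derivation, as in~\eqref{eq:lambdanp}, is to divide the admissibility estimate itself, $(\lambda_p-\lambda_n)F(\xi_{n,p})\leq(p-n)\varepsilon+C(\delta)\varepsilon\sqrt{\varepsilon}$ with $\xi_{n,p}\in[-\lambda_p,-\lambda_n]$, by $F(\xi_{n,p})\geq 1/C(\delta)$; you have both ingredients, so this is repairable.

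The more serious gap is the division step, and it is exactly the exponent bookkeeping you flag but do not resolve. Since $\frac{F(-\lambda_n)}{p-n}-\frac{\varepsilon}{\lambda_p-\lambda_n}=\frac{F(-\lambda_n)(\lambda_p-\lambda_n)-(p-n)\varepsilon}{(p-n)(\lambda_p-\lambda_n)}$, the numerator bound $C(\delta)(\varepsilon\sqrt{\varepsilon}+|p-n|^2\varepsilon^2)$ must be divided by $|p-n|\,|\lambda_p-\lambda_n|\geq|p-n|^2\varepsilon$, giving $\frac{C(\delta)\sqrt{\varepsilon}}{(p-n)^2}+C(\delta)\varepsilon$; dividing by $F(-\lambda_n)|\lambda_p-\lambda_n|\sim|p-n|\varepsilon$ as you do yields the weaker $\frac{C(\delta)\sqrt{\varepsilon}}{|p-n|}+C(\delta)|p-n|\varepsilon$, which is off by a full factor $|p-n|$ and cannot produce the stated estimates: after your difference-of-squares step the error still contains a term of size $\varepsilon$, while for $|p-n|\sim\varepsilon^{-r}$ the right-hand side of the first inequality is of order $\sqrt{\varepsilon}\,|p-n|^{-1-1/(2r)}\sim\varepsilon^{1+r}\ll\varepsilon$, and your proposed trade $\varepsilon\leq\sqrt{\varepsilon}\,|p-n|^{-1/(2r)}$ still falls short by the factor $|p-n|$ (the same loss breaks the term $\sqrt{\varepsilon}/|p-n|$ against $(\sqrt{\varepsilon}+\varepsilon^{1-2r})/(p-n)^2$ in the second inequality). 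With the correct denominator the rest of your plan closes essentially as in the paper: the first-order error $\frac{\sqrt{\varepsilon}}{(p-n)^2}+\varepsilon$ becomes $\frac{\sqrt{\varepsilon}}{|p-n|^3}+\frac{\varepsilon}{|p-n|}$ after squaring, and $\frac{\varepsilon}{|p-n|}\leq\frac{\sqrt{\varepsilon}}{|p-n|^{1+1/(2r)}}$; for the second inequality the paper compares $(\lambda_p-\lambda_n-\varepsilon)F(-\lambda_n)$ with $(p-n-F(-\lambda_n))\varepsilon$ (same numerator) and divides by the product of the two denominators, using $F\leq 1-1/C(\delta)$, the $\varepsilon^{2-2r}$ coming from $|\lambda_p-\lambda_n|\leq C(\delta)\varepsilon^{1-r}$. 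Your $t\mapsto t/(1-t)$ Lipschitz variant would then also work for $|p-n|\geq 2$, but note that the case $p=n+1$, where $\lambda_{n+1}-\lambda_n-\varepsilon=\gamma_{n+1}$ can a priori be tiny, needs the lower bound $\gamma_{n+1}\geq\varepsilon/C(\delta)$ (obtained from the admissibility estimate with $p=n+1$ together with $F\leq 1-1/C(\delta)$); your ``treat separately'' remark leaves this open.
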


\begin{proof}
We use Corollary~\ref{cor:asymptotics} in the small eigenvalue case and deduce that there exists $\xi_{n,p}\in[-\lambda_p,-\lambda_n]$ such that
\begin{equation}\label{eq:xi_np}
\left|(\lambda_p-\lambda_n)F(\xi_{n,p})-(p-n)\varepsilon\right|
	\leq C(\delta)\varepsilon\sqrt{\varepsilon}.
\end{equation}
As a consequence, we have from Lemma~\ref{lem:lambdanp} that
\begin{align*}
\left|\frac{F(\xi_{n,p})}{p-n}-\frac{\varepsilon}{\lambda_p-\lambda_n}\right|
	\leq \frac{C(\delta)\varepsilon\sqrt{\varepsilon}}{(p-n)(\lambda_p-\lambda_n)}
	\leq \frac{C(\delta)\sqrt{\varepsilon}}{(p-n)^2}.
\end{align*}
Using Lemma~\ref{lem:lambdanp} again, we know that $|\frac{\varepsilon}{\lambda_p-\lambda_n}|\leq \frac{1}{|p-n|}$, whereas $|\frac{F(\xi_{n,p})}{p-n}|\leq \frac{1}{|p-n|}$, and therefore
\[
\left|\frac{F(\xi_{n,p})^2}{(p-n)^2}-\frac{\varepsilon^2}{(\lambda_p-\lambda_n)^2}\right|
	\leq \frac{C(\delta)\sqrt{\varepsilon}}{|p-n|^3}.
\]
Then, we know from the Lipschitz properties of $F$ (see Corollary~\ref{cor:asymptotics}) that
\begin{align*}
|F(\xi_{n,p})-F(-\lambda_n)|
	\leq C(\delta)|\xi_{n,p}+\lambda_n|
	\leq C(\delta)|\lambda_p-\lambda_n|.
\end{align*}
However, thanks to~\eqref{eq:xi_np}, we have
\begin{equation}\label{eq:lambdanp}
|\lambda_p-\lambda_n|
	\leq \frac{C(\delta)\varepsilon\sqrt{\varepsilon}+|p-n|\varepsilon}{F(\xi_{n,p})}
	\leq C'(\delta)|p-n|\varepsilon,
\end{equation}
so that
\begin{equation}\label{eq:Fxi}
|F(\xi_{n,p})-F(-\lambda_n)|
	\leq C(\delta)|p-n|\varepsilon.
\end{equation}
As a consequence, one can replace $F(\xi_{n,p})$ by $F(-\lambda_n)$ up to a small error:
\begin{equation*}
\left|\frac{F(-\lambda_n)^2}{(p-n)^2}-\frac{\varepsilon^2}{(\lambda_p-\lambda_n)^2}\right|
	\leq \frac{C(\delta)\sqrt{\varepsilon}}{|p-n|^3}+\frac{C(\delta)\varepsilon}{|p-n|}.
\end{equation*}
Since $|p-n|\leq\varepsilon^{-r}$ with $r<1$, we get that $\varepsilon\leq \frac{1}{|p-n|^{1/r}}$ and we conclude that
\begin{equation*}
\left|\frac{F(-\lambda_n)^2}{(p-n)^2}-\frac{\varepsilon^2}{(\lambda_p-\lambda_n)^2}\right|
	\leq \frac{C(\delta)\sqrt{\varepsilon}}{|p-n|^{3}}+\frac{C(\delta)\sqrt{\varepsilon}}{|p-n|^{1+1/2r}}.
\end{equation*}

Similarly, using inequalities~\eqref{eq:xi_np} and~\eqref{eq:Fxi}, we have
\begin{align*}
\left|(\lambda_p-\lambda_n-\varepsilon)F(-\lambda_n)-(p-n-F(-\lambda_n))\varepsilon\right|
	&=\left|(\lambda_p-\lambda_n)F(-\lambda_n)-(p-n)\varepsilon\right|\\
	&\leq C(\delta)\varepsilon\sqrt{\varepsilon}+(\lambda_p-\lambda_n)C(\delta)|p-n|\varepsilon.
\end{align*}
Since inequality~\eqref{eq:lambdanp} implies $|\lambda_p-\lambda_n|\leq C(\delta)|p-n|\varepsilon\leq C(\delta)\varepsilon^{1-r}$, we finally get
\[
\left|(\lambda_p-\lambda_n-\varepsilon)F(-\lambda_n)-(p-n-F(-\lambda_n))\varepsilon\right|
	\leq C(\delta)(\varepsilon\sqrt{\varepsilon}+\varepsilon^{2-2r}),
\]
so that
\begin{equation*}
\left|\frac{F(-\lambda_n)}{(p-n-F(-\lambda_n))}-\frac{\varepsilon}{\lambda_p-\lambda_n-\varepsilon}\right|
	\leq \frac{ C(\delta)(\varepsilon\sqrt{\varepsilon}+\varepsilon^{2-2r})}{(p-n-F(-\lambda_n))(\lambda_p-\lambda_n-\varepsilon)}.
\end{equation*}
Since $\lambda_n+\varepsilon\in\Lambda_-(\delta)$, we know that $F(-\lambda_n)\leq 1-\frac{1}{C(\delta)}$. Moreover, we make use of Lemma~\ref{lem:lambdanp} and see that actually
\begin{equation*}
\left|\frac{F(-\lambda_n)}{(p-n-F(-\lambda_n))}-\frac{\varepsilon}{\lambda_p-\lambda_n-\varepsilon}\right|
	\leq  C(\delta)\frac{(\sqrt{\varepsilon}+\varepsilon^{1-2r})}{(p-n)^2}.\qedhere
\end{equation*}
\end{proof}

Then, we establish an approximation of $\frac{1}{\varepsilon}\sqrt{a_n\gamma_n\gamma_{n+1}}$.

\begin{lem}[Approximation of $a_n$ in terms of the distribution function]\label{lem:asymptotics_an} Let $0<c<r<1$. Then there exist $C(\delta)>0,\varepsilon_0(\delta)>0$ such that  for every $\delta>0$ and $0<\varepsilon<\varepsilon_0(\delta)$, the following holds. For every $n\geq 1$ satisfying $\lambda_n+\varepsilon\in\Lambda_-(\delta)$,
\[
\left|a_n\frac{\gamma_n\gamma_{n+1}}{\varepsilon^2}-
	\sinc(\pi F(-\lambda_n))^2\right|
	\leq C(\delta)(\varepsilon^{r-c}+\varepsilon^{1-r}).\qedhere
\]
\end{lem}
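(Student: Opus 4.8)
\emph{Proof sketch.} The plan is to read everything off the product formula for $a_n$ proved just above,
\[
a_n\frac{\gamma_n\gamma_{n+1}}{\varepsilon^2}
	=\left(1+\frac{\varepsilon}{\lambda_n-\lambda_0}\right)\prod_{\substack{p=1\\p\neq n}}^{\infty}\left(1-\frac{\varepsilon^2}{(\lambda_p-\lambda_n)^2}\right),
\]
and to match the infinite product with Euler's product
\[
\sinc(\pi x)=\prod_{m=1}^{\infty}\left(1-\frac{x^2}{m^2}\right),
\qquad
\sinc(\pi x)^2=\prod_{m\in\Z\setminus\{0\}}\left(1-\frac{x^2}{m^2}\right).
\]
With $x=F(-\lambda_n)$ and $m=p-n$, the expectation is that Lemma~\ref{lem:asymptotics}, in the small-eigenvalue regime $\lambda_n+\varepsilon\in\Lambda_-(\delta)$, turns the above product into $\sinc(\pi F(-\lambda_n))^2$ up to the stated error.

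First I would dispose of two essentially harmless factors. Since $\lambda_n+\varepsilon\in\Lambda_-(\delta)$, the third item of Definition~\ref{def:approximate_Birkhoff} forces $n\geq\frac{1}{C(\delta)\varepsilon}$ (all eigenvalues with $\lambda_k+\varepsilon\leq-\max(u_0)+\delta$ precede $\lambda_n$ and there are at least that many of them), so $\lambda_n-\lambda_0\geq n\varepsilon\geq\frac{1}{C(\delta)}$ and $1+\frac{\varepsilon}{\lambda_n-\lambda_0}=1+O(C(\delta)\varepsilon)$. The same lower bound on $n$ shows that $\{p-n:p\geq1,\ p\neq n\}$ is $\Z\setminus\{0\}$ with only the tail $\{m\leq-n\}$ removed; the missing model factors contribute $\prod_{j\geq n}\left(1-\frac{F(-\lambda_n)^2}{j^2}\right)=1+O(1/n)=1+O(C(\delta)\varepsilon)$, so that
\[
\prod_{\substack{p\geq1\\p\neq n}}\left(1-\frac{F(-\lambda_n)^2}{(p-n)^2}\right)=\sinc(\pi F(-\lambda_n))^2\,(1+O(C(\delta)\varepsilon)).
\]
I would also record $F(-\lambda_n)\leq1-\frac{1}{C(\delta)}<1$: since $-\lambda_n\geq\min(u_0)+\delta$, one has $1-F(-\lambda_n)\geq\frac{1}{2\pi}\operatorname{Leb}(u_0<\min(u_0)+\delta)>0$, which keeps $\sinc(\pi F(-\lambda_n))$ and every factor $1-\frac{F(-\lambda_n)^2}{m^2}$ bounded below by $\frac{1}{C(\delta)}$.

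It then remains to compare $P:=\prod_{p\geq1,\,p\neq n}\left(1-\frac{\varepsilon^2}{(\lambda_p-\lambda_n)^2}\right)$ with the model product $Q:=\prod_{p\geq1,\,p\neq n}\left(1-\frac{F(-\lambda_n)^2}{(p-n)^2}\right)$, for which the previous step already gives $Q=\sinc(\pi F(-\lambda_n))^2(1+O(C(\delta)\varepsilon))$. Each factor of $P$ and of $Q$ lies in $(0,1]$: for $P$, Lemma~\ref{lem:lambdanp} gives $\frac{\varepsilon^2}{(\lambda_p-\lambda_n)^2}\leq\frac{1}{|p-n|^2}\leq\frac14$ when $|p-n|\geq2$, whereas for the diagonal neighbours $p=n\pm1$ one applies Lemma~\ref{lem:asymptotics} with parameter $\delta/2$ to get $\frac{\varepsilon^2}{(\lambda_{n\pm1}-\lambda_n)^2}\leq F(-\lambda_n)^2+C(\delta)\sqrt\varepsilon\leq1-\frac{1}{C(\delta)}$ for $\varepsilon$ small — equivalently, this forces $\gamma_n,\gamma_{n+1}\geq\varepsilon/C(\delta)$. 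Hence $|P-Q|\leq\sum_{p\neq n}\left|\frac{\varepsilon^2}{(\lambda_p-\lambda_n)^2}-\frac{F(-\lambda_n)^2}{(p-n)^2}\right|$, which I would split at $|p-n|=\varepsilon^{-r}$. For $|p-n|\leq\varepsilon^{-r}$ the quantization rule keeps $\lambda_p+\varepsilon\in\Lambda_-(\delta/2)$ and gives $|\lambda_p-\lambda_n|\leq C(\delta)|p-n|\varepsilon\to0$ (the contiguity argument already used in the proof of Lemma~\ref{lem:asymptotics}), so the first estimate of Lemma~\ref{lem:asymptotics} applies, and summing $\frac{C(\delta)\sqrt\varepsilon}{|p-n|^{1+\frac{1}{2r}}}$ gives a contribution $\leq C(\delta)\sqrt\varepsilon$; for $|p-n|>\varepsilon^{-r}$ both terms are $\leq|p-n|^{-2}$ by Lemma~\ref{lem:lambdanp}, summing to $\leq C\varepsilon^{r}$. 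Thus $|P-Q|\leq C(\delta)(\sqrt\varepsilon+\varepsilon^{r})$, and combining with $P\leq1$, $\sinc(\pi F(-\lambda_n))^2\leq1$ and the two $1+O(C(\delta)\varepsilon)$ correction factors yields $\left|a_n\frac{\gamma_n\gamma_{n+1}}{\varepsilon^2}-\sinc(\pi F(-\lambda_n))^2\right|\leq C(\delta)(\sqrt\varepsilon+\varepsilon^{r})$. Finally $\sqrt\varepsilon\leq\varepsilon^{r-c}+\varepsilon^{1-r}$ (since $\min(r-c,1-r)\leq\frac12$ for every $0<c<r<1$) and $\varepsilon^{r}\leq\varepsilon^{r-c}$, so this is $\leq C(\delta)(\varepsilon^{r-c}+\varepsilon^{1-r})$; inserting the intermediate cutoff $\varepsilon^{-c}$ in the split, to stay parallel with the other proofs, changes nothing.

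The main obstacle is carrying out this product comparison \emph{uniformly} in $\varepsilon$ (and, up to $C(\delta)$, in $\delta$): the sensitive region is the diagonal, where a priori $\frac{\varepsilon^2}{(\lambda_p-\lambda_n)^2}$ is only known to be $\leq1$ and a naive term-by-term logarithmic expansion of $P$ degenerates. Ruling this out means feeding the quantization rule of Definition~\ref{def:approximate_Birkhoff}, through Lemma~\ref{lem:asymptotics}, back into the estimate to see that the gaps $\gamma_n,\gamma_{n+1}$ — hence the diagonal factors of $P$ — stay bounded away from the degenerate values $0$ and $1$. With that in hand, the remaining tail summations are routine.
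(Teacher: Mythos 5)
Your proof is correct and uses the same ingredients in the same roles as the paper: the product formula for $a_n\gamma_n\gamma_{n+1}/\varepsilon^2$, the split at $|p-n|=\varepsilon^{-r}$, Lemma~\ref{lem:lambdanp} for the far-from-diagonal tail, Lemma~\ref{lem:asymptotics} (with parameter $\delta/2$, legitimized by item 3 of Definition~\ref{def:approximate_Birkhoff}) near the diagonal, and the Euler sine product, together with $n\geq \frac{1}{C(\delta)\varepsilon}$ to absorb the prefactor $1+\varepsilon/(\lambda_n-\lambda_0)$ and the missing factors $\prod_{j\geq n}\bigl(1-F(-\lambda_n)^2/j^2\bigr)$. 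The only real difference is mechanical: the paper compares $\log\bigl(a_n\gamma_n\gamma_{n+1}/\varepsilon^2\bigr)$ with $\sum_p\log\bigl(1-F(-\lambda_n)^2/(p-n)^2\bigr)$ and exponentiates, which implicitly requires the diagonal factors $1-\varepsilon^2/(\lambda_{n\pm1}-\lambda_n)^2$ to stay bounded below (exactly the bound $F(-\lambda_n)\leq 1-1/C(\delta)$ plus Lemma~\ref{lem:asymptotics} that you spell out), whereas you compare the two products directly via the telescoping inequality for factors in $[0,1]$, which only needs the nonnegativity coming from Lemma~\ref{lem:lambdanp}; so the ``main obstacle'' you flag at the end is actually harmless in your formulation, and your derivation of $\gamma_n,\gamma_{n+1}\geq\varepsilon/C(\delta)$ is precisely the point the paper's logarithmic step leaves implicit. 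The final bookkeeping ($\sqrt\varepsilon+\varepsilon^{r}\leq C(\varepsilon^{r-c}+\varepsilon^{1-r})$ since $\min(r-c,1-r)\leq\tfrac12$ and $r>r-c$) is fine, so your error term matches the statement.
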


\begin{proof}
We consider the logarithm of $a_n$
\[
\log\left(a_n\frac{\gamma_n\gamma_{n+1}}{\varepsilon^2}\right)
	=\log\left(1+\frac{\varepsilon}{\lambda_n-\lambda_0}\right)
	\sum_{\substack{p=1\\p\neq n}}^{\infty} \log\left(1-\frac{\varepsilon^2}{(\lambda_p-\lambda_n)^2}\right).\]
When $|p-n|\geq \varepsilon^{-r}$, we make use of Lemma~\ref{lem:lambdanp}:
\[
\frac{\varepsilon^2}{(\lambda_p-\lambda_n)^2}
	\leq \frac{1}{|n-p|^2}
	\leq \frac{\varepsilon^{r-c}}{|n-p|^{1+c/r}}.
\]
When $|p-n|\leq \varepsilon^{-r}$, since $\lambda_n+\varepsilon\in\Lambda_-(\delta)$, then $\lambda_p+\varepsilon\in\Lambda_-(\delta/2)$ and using Lemma~\ref{lem:asymptotics} there holds
\begin{equation*}
\left|\frac{F(-\lambda_n)^2}{(p-n)^2}-\frac{\varepsilon^2}{(\lambda_p-\lambda_n)^2}\right|
	\leq \frac{C(\delta)\varepsilon^{1/2}}{(p-n)^{1+1/2r}}.
\end{equation*}
Hence, we get by summation that
\[
\left|\log\left(a_n\frac{\gamma_n\gamma_{n+1}}{\varepsilon^2}\right)
	-\sum_{\substack{p=1\\|p-n|\leq\varepsilon^{-r}}}^{\infty} \log\left(1-\frac{F(-\lambda_n)^2}{(p-n)^2}\right)\right|
	\leq C(\delta)(\varepsilon^{1/2}+\varepsilon^{r-c}).
\]
Note that $F(-\lambda_n)\leq 1-1/C(\delta)$, as a consequence,
\[
\left|\log\left(a_n\frac{\gamma_n\gamma_{n+1}}{\varepsilon^2}\right)\right|
	\leq C(\delta).
\]
Also note that since $\lambda_n+\varepsilon\in\Lambda_-(\delta)$, we have $n\geq\frac{\delta}{C\varepsilon}$, therefore the indexes $p\geq 1$ such that $|n-p|\leq\varepsilon^{-r}$ is the same as the indexes $p\in\Z$ such that $|n-p|\leq\varepsilon^{-r}$ when $\varepsilon<\varepsilon_0(\delta)$. 
Moreover, since $F(-\lambda_n)\leq 1-1/C(\delta)$, the change of variable $k=p-n$ for $p> n$ and $k=n-p$ for $n>p$ leads to
\begin{equation*}
\sum_{\substack{p=-\infty\\|p-n|>\varepsilon^{-r}}}^{\infty} \left|\log\left(1-\frac{F(-\lambda_n)^2}{(p-n)^2}\right)\right|
	\leq 2\sum_{\substack{k=\varepsilon^{-r}+1}}^{\infty} \left|\log\left(1-\frac{F(-\lambda_n)^2}{k^2}\right)\right|
	\leq C(\delta)\varepsilon^{1-r}.
\end{equation*}
Therefore, we have proven that
\[
\left|\log\left(a_n\frac{\gamma_n\gamma_{n+1}}{\varepsilon^2}\right)
	-2\sum_{k=1}^{\infty} \log\left(1-\frac{F(-\lambda_n)^2}{(p-n)^2}\right)\right|
	\leq C(\delta)(\varepsilon^{1/2}+\varepsilon^{r-c}+\varepsilon^{1-r}).
\]
Taking the exponential, since $\left|\log\left(a_n\frac{\gamma_n\gamma_{n+1}}{\varepsilon^2}\right)\right|$ stays bounded by $C(\delta)$ for $\varepsilon<\varepsilon_0(\delta)$, we deduce
\[
\left|a_n\frac{\gamma_n\gamma_{n+1}}{\varepsilon^2}-
	\prod_{\substack{k=1}}^{\infty} \left(1-\frac{F(-\lambda_n)^2}{k^2}\right)^2\right|
	\leq C(\delta)(\varepsilon^{1/2}+\varepsilon^{r-c}+\varepsilon^{1-r}).
\]

Finally, we use the Weierstrass sine product formula for $z\in(0,1)$ 
\[
\sinc(\pi z)=\frac{\sin(\pi z)}{\pi z}= \prod_{k\geq 1}\left(1-\frac{z^2}{k^2}\right)
\]
to deduce that
\[
\left|a_n\frac{\gamma_n\gamma_{n+1}}{\varepsilon^2}-
	\sinc(\pi F(-\lambda_n))^2\right|
	\leq C(\delta)(\varepsilon^{1/2}+\varepsilon^{r-c}+\varepsilon^{1-r}),
\]
where $\min(r-c,1-2r)<1/2$.
\end{proof}

In what follows, the above approximations will lead us to study the series
\begin{equation*}
\sum_{\substack{m_1,\dots,m_{k}\in\Z\\m_1+\dots+m_{k}=0}}\prod_{i=1}^{k}\frac{1}{m_i-F(-\lambda_{n})}.
\end{equation*}
We first prove the convergence  and find a formula for this sum.
\begin{lem}[Toeplitz identity]\label{lem:Toeplitz}
Let $c\in (0,1)$. Then 
\[
\sum_{\substack{m_1,\dots,m_{k}\in\Z\\ m_1+\dots+m_k=0}}\prod_{i=1}^{k}\frac{1}{|m_i-c|}
	<\infty
\] 
and there holds
\begin{equation*}
\sum_{\substack{m_1,\dots,m_{k}\in\Z\\ m_1+\dots+m_k=0}}\prod_{i=1}^{k}\frac{1}{m_i-c}
	=(-1)^k\frac{\pi^{k-1}\sin(k\pi c)}{k c\sin(\pi c)^k}.
\end{equation*}
\end{lem}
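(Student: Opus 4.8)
The plan is to prove the Toeplitz identity by recognizing the sum as the $k$-th Fourier coefficient of a power of an explicit periodic function, exactly as in the motivating Remark~\ref{rk:toeplitz_miller}. First I would introduce, for fixed $c\in(0,1)$, the function
\[
g_c(\theta):=\sum_{m\in\Z}\frac{e^{im\theta}}{m-c},\qquad \theta\in(0,2\pi),
\]
and identify it in closed form. The series converges conditionally for $\theta\notin 2\pi\Z$ (the partial sums are uniformly bounded by Dirichlet's test, since $\frac{1}{m-c}$ is monotone in $|m|$ for large $|m|$), and a standard computation — either summing the two geometric-type tails and simplifying, or recognizing it as the Fourier expansion of $\theta\mapsto -\frac{\pi e^{ic(\theta-\pi)}}{\sin(\pi c)}$ via the classical partial fraction expansion of the cosecant — gives
\[
g_c(\theta)=-\frac{\pi\, e^{ic(\theta-\pi)}}{\sin(\pi c)},\qquad 0<\theta<2\pi.
\]
I would verify this by checking that the right-hand side has the asserted Fourier coefficients: $\widehat{g_c}(m)=\frac{1}{2\pi}\int_0^{2\pi}\left(-\frac{\pi e^{ic(\theta-\pi)}}{\sin(\pi c)}\right)e^{-im\theta}\,\d\theta=\frac{1}{m-c}$, a one-line integral.

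Next, absolute convergence of the $k$-fold sum. Since $g_c\in L^2(\T)$ (its modulus is the constant $\frac{\pi}{|\sin(\pi c)|}$, so in fact $g_c\in L^\infty(\T)$), the same Young/convolution argument used in Remark~\ref{rk:toeplitz_miller} applies: writing
\[
\sum_{\substack{m_1,\dots,m_k\in\Z\\ m_1+\dots+m_k=0}}\prod_{i=1}^k\frac{1}{|m_i-c|}
=\sum_{\substack{m_1,\dots,m_k\\ \sum m_i=0}}\prod_{i=1}^k |\widehat{g_c}(m_i)|
=\widehat{(|g_c|^{*k})}(0)\quad\text{(with $|\cdot|$ on the coefficients)},
\]
which is finite because $g_c$, hence the sequence $(|\widehat{g_c}(m)|)_m=(|m-c|^{-1})_m$, is square-summable and the $k$-fold convolution of an $\ell^2$ sequence with itself is bounded (indeed in $\ell^\infty$ already after two convolutions, then stays bounded). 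Concretely one bounds it by $\|g_c\|_{L^{2k/(k-1)}}^{\,k}$ or simply notes $|g_c|$ is a constant so $|g_c|^{*k}$ is computed directly. This justifies rearrangement.

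Then the main computation: by absolute convergence I may compute the signed sum as a Fourier coefficient of $g_c^k$. Since $g_c(\theta)=\sum_m \widehat{g_c}(m)e^{im\theta}$ with absolutely... — rather, since the convolution converges absolutely — we have
\[
\sum_{\substack{m_1,\dots,m_k\in\Z\\ m_1+\dots+m_k=0}}\prod_{i=1}^k\frac{1}{m_i-c}
=\frac{1}{2\pi}\int_0^{2\pi} g_c(\theta)^k\,\d\theta
=\frac{1}{2\pi}\int_0^{2\pi}\left(-\frac{\pi e^{ic(\theta-\pi)}}{\sin(\pi c)}\right)^{\!k}\d\theta.
\]
Pulling out constants, this equals
\[
(-1)^k\frac{\pi^k}{\sin(\pi c)^k}\cdot e^{-ik\pi c}\cdot\frac{1}{2\pi}\int_0^{2\pi}e^{ikc\theta}\,\d\theta
=(-1)^k\frac{\pi^k}{\sin(\pi c)^k}\cdot e^{-ik\pi c}\cdot\frac{e^{2\pi i k c}-1}{2\pi i k c}.
\]
A short simplification of $e^{-ik\pi c}(e^{2\pi ikc}-1)=e^{ik\pi c}-e^{-ik\pi c}=2i\sin(k\pi c)$ turns this into $(-1)^k\frac{\pi^{k-1}\sin(k\pi c)}{kc\sin(\pi c)^k}$, which is the claimed formula.

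The main obstacle is the justification of interchanging summation and integration: the Fourier series of $g_c$ converges only conditionally, so writing $\frac{1}{2\pi}\int_0^{2\pi}g_c^k$ as the restricted multi-sum requires care. The clean route is to avoid expanding $g_c$ pointwise and instead argue purely at the level of the coefficient sequences: the restricted sum $\sum_{\sum m_i=0}\prod\widehat{g_c}(m_i)$ is, by definition, the zeroth coefficient of the $k$-fold convolution $\widehat{g_c}^{*k}$, this convolution is absolutely convergent (by the $\ell^2$/Young estimate above, since $\widehat{g_c}\in\ell^2$ with $k\ge 2$; the case $k=1$ is the trivial identity $\frac{1}{-c}=-\frac{\pi^0\sin(\pi c)}{c\sin(\pi c)}$), and an absolutely convergent convolution of two $L^2$ Fourier sequences is the Fourier sequence of the product of the two $L^2$ functions — here $g_c^k\in L^\infty\subset L^1$, so its zeroth Fourier coefficient is exactly $\frac{1}{2\pi}\int_0^{2\pi}g_c^k$. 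This is the only delicate point; everything else is the explicit identification of $g_c$ and elementary trigonometric manipulation.
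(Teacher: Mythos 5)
Your proposal is correct and follows essentially the same route as the paper's proof: you introduce the same auxiliary function with Fourier coefficients $\frac{1}{m-c}$, identify it as $-\frac{\pi e^{ic(\theta-\pi)}}{\sin(\pi c)}$, invoke the $L^2$ convolution theorem to equate the restricted sum with $\frac{1}{2\pi}\int_0^{2\pi}g_c(\theta)^k\,\d\theta$, and evaluate the integral. The only (cosmetic) difference is in the absolute-convergence step, where the paper compares with the Miller--Xu series of Remark~\ref{rk:toeplitz_miller} via $|m_i-c|\geq |m_i|\min(c,1-c)$ while you argue through $\ell^2$/Young-type bounds; your parenthetical ``in $\ell^\infty$ after two convolutions, then stays bounded'' should be tightened (an $\ell^\infty*\ell^2$ convolution need not converge; one should note the two-fold convolution is again in $\ell^2$), but this is easily repaired and does not affect the argument.
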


\begin{proof}
To establish absolute convergence, we use the inequality $|m_i-c|\geq |m_i|\min(c,1-c)$ to get that 
\[
\sum_{\substack{m_1,\dots,m_{k}\in\Z\\ m_1+\dots+m_k=0\\\exists i,|m_i|>\varepsilon^{-r}}}\prod_{i=1}^{k}\frac{1}{|m_i-c|}
	\leq \frac{1}{\min(c,1-c)^k}\sum_{\substack{m_1,\dots,m_{k}\in\Z\\ m_1+\dots+m_k=0\\\exists i,|m_i|>\varepsilon^{-r}}}\prod_{i=1}^{k}\frac{1}{|m_i|},
\]
where the upper bound is the remainder term of an absolutely convergent series thanks to Lemma 4.7 in Miller and Xu~\cite{MillerXu2011}, see Remark~\ref{rk:toeplitz_miller}.

We now define
\[
f(x):=\sum_{m\in\Z}\frac{e^{imx}}{m-c}.
\]
This Fourier series is convergent in $L^2(\T)$, and one can check by direct calculation of the Fourier coefficients $\widehat{f}(n)=\frac{1}{2\pi}\int_0^{2\pi}f(x)e^{-inx}\d x$ that it is equal to
\[
f(x)=-\frac{\pi e^{ic(x-\pi)}}{\sin(\pi c)}.
\]
Indeed, for $n\in\Z$, the above expression leads to
\[
\widehat{f}(n)
	=-\frac{\pi e^{-ic\pi}}{\sin(\pi c)}\frac{1}{2\pi}\int_0^{2\pi}e^{i(c-n)x}\d x
	=-\frac{ e^{-ic\pi}}{2\sin(\pi c)}\left[\frac{e^{i(c-n)x}}{i(c-n)}\right]_0^{2\pi}
	=-\frac{1}{c-n}.
\]
Since $f$ belongs to $L^2$ and $f^k$ also belongs to $L^2$ for every positive integer $k$, the convolution theorem implies
\[
\sum_{\substack{m_1,\dots,m_k \in \Z\\ m_1+\dots+m_k=0}}\widehat{f}(m_1)\dots \widehat{f}(m_k)=\widehat{f^k}(0)=\frac{1}{2\pi}\int_0^{2\pi}f(x)^k\d x.
\]
As a consequence, we conclude that
\begin{equation*}
\sum_{\substack{m_1,\dots,m_k \in \Z\\ m_1+\dots+m_k=0}}\frac{1}{(m_1-c)\dots(m_k-c)}
	=(-1)^k\frac{\pi^{k-1}e^{-ik\pi c}}{2\sin(\pi c)^k}\left[\frac{e^{ikcx}}{ikc}\right]_0^{2\pi}\\
	=(-1)^k\frac{\pi^{k-1}\sin(k\pi c)}{kc\sin(\pi c)^k}.\qedhere
\end{equation*}
\end{proof}

\begin{proof}[Proof of Theorem~\ref{prop:Riemann_CV}]
We first remove the off-diagonal coefficients and the eigenvalues which are not in $\Lambda_-(\delta)$ thanks to Proposition~\ref{prop:M_ACV} up to adding a remainder term of the form $C(\delta)\varepsilon^r+C\delta$: we get
\begin{equation*}
\left|\widehat{u_0^{\varepsilon}}(k)
	-  \varepsilon\sum_{\substack{n_1,\dots,n_{k+1}\geq 1,\; n_1=n_{k+1}, \\ |n_i-n_{i+1}|\leq \varepsilon^{-r},\; \lambda_{n_i}+\varepsilon\in\Lambda_-(\delta)}}\prod_{i=1}^{k}\sqrt{a_{n_i}\gamma_{n_{i+1}}\gamma_{n_i+1}}\frac{1}{\lambda_{n_{i+1}}-\lambda_{n_i}-\varepsilon}e^{i(\theta_{n_i+1}-\theta_{n_{i+1}})}\right|
		\leq C(\delta)\varepsilon^{r}+C\delta.
\end{equation*}
A re-indexation implies
\begin{equation*}
\left|\widehat{u_0^{\varepsilon}}(k)
	-  \varepsilon\sum_{\substack{n_1,\dots,n_{k+1}\geq 1,\; n_1=n_{k+1}, \\ |n_i-n_{i+1}|\leq \varepsilon^{-r},\; \lambda_{n_i}+\varepsilon\in\Lambda_-(\delta)}}\prod_{i=1}^{k}\sqrt{a_{n_i}\gamma_{n_{i}}\gamma_{n_i+1}}\frac{1}{\lambda_{n_{i+1}}-\lambda_{n_i}-\varepsilon}e^{i(\theta_{n_i+1}-\theta_{n_{i}})}\right|
		\leq C(\delta)\varepsilon^{r}+C\delta.
\end{equation*}

We then use the second inequality from Lemma~\ref{lem:asymptotics}
\begin{equation*}
\left|\frac{F(-\lambda_n)}{p-n-F(-\lambda_n)}-\frac{\varepsilon}{\lambda_p-\lambda_n-\varepsilon}\right|
	\leq  C(\delta)\frac{\sqrt{\varepsilon}+\varepsilon^{1-2r}}{(p-n)^2}.
\end{equation*}
When $\lambda_n+\varepsilon,\lambda_p+\varepsilon\in\Lambda_-(\delta)$ and $|n-p|\leq\varepsilon^{-r}$, we know thanks to the Lipschitz properties of $F$ and inequality~\eqref{eq:lambdanp} that
\[
|F(-\lambda_n)-F(-\lambda_p)|
	\leq C(\delta)|\lambda_p-\lambda_n|
	\leq C'(\delta)|p-n|\varepsilon
	\leq C'(\delta)\varepsilon^{1-r}.
\]
As a consequence, we have that if $|n-n_1|,|p-n_1|\leq\varepsilon^{-r}$, then
\begin{equation*}
\left|\frac{F(-\lambda_{n_1})}{p-n-F(-\lambda_{n_1})}-\frac{\varepsilon}{\lambda_p-\lambda_n-\varepsilon}\right|
	\leq  C(\delta)\frac{\sqrt{\varepsilon}+\varepsilon^{1-2r}}{(p-n)^2}+C(\delta)\frac{\varepsilon^{1-r}}{|p-n|}
	\leq C'(\delta)\frac{\sqrt{\varepsilon}+\varepsilon^{1-2r}}{|p-n|}.
\end{equation*}
Using the bound on $a_n\gamma_n\gamma_{n+1}/\varepsilon$ from Lemma~\ref{lem:an_bound}, we get
\begin{multline*}
\left|\widehat{u_0^{\varepsilon}}(k)
	-  \varepsilon\sum_{\substack{n_1,\dots,n_{k+1}\geq 1,\; n_1=n_{k+1} \\ |n_i-n_{i+1}|\leq \varepsilon^{-r},\; \lambda_{n_i}+\varepsilon\in\Lambda_-(\delta)}}\prod_{i=1}^{k}\frac{\sqrt{a_{n_i}\gamma_{n_i}\gamma_{n_i+1}}}{\varepsilon}\frac{ F(-\lambda_{n_1})}{n_{i+1}-n_i-F(-\lambda_{n_1})}e^{i(\theta_{n_i+1}-\theta_{n_{i}})}\right|\\
		\leq C(\delta)\varepsilon^r+C\delta+C(\delta)\varepsilon(\sqrt{\varepsilon}+\varepsilon^{1-2r})\sum_{\substack{n_1,\dots,n_{k+1}\geq 1,\; n_1=n_{k+1} \\ |n_i-n_{i+1}|\leq \varepsilon^{-r},\; \lambda_{n_i}+\varepsilon\in\Lambda_-(\delta)}}\prod_{i=1}^{k}\frac{1}{|n_{i+1}-n_i|}.
\end{multline*}
Since there are not more than $\frac{C}{\varepsilon}$ indexes $n$ such that $\lambda_n+\varepsilon\in\Lambda_-(\delta)$, this leads to
\begin{multline*}
\left|\widehat{u_0^{\varepsilon}}(k)
	-  \varepsilon\sum_{\substack{n_1,\dots,n_{k+1}\geq 1,\; n_1=n_{k+1} \\ |n_i-n_{i+1}|\leq \varepsilon^{-r},\; \lambda_{n_i}+\varepsilon\in\Lambda_-(\delta)}}\prod_{i=1}^{k}\frac{\sqrt{a_{n_i}\gamma_{n_i}\gamma_{n_i+1}}}{\varepsilon}\frac{ F(-\lambda_{n_1})}{n_{i+1}-n_i-F(-\lambda_{n_1})}e^{i(\theta_{n_i+1}-\theta_{n_{i}})}\right|\\
		\leq C(\delta)(\varepsilon^r+\sqrt{\varepsilon}+\varepsilon^{1-2r})+C\delta.
\end{multline*}

Next, for every $n$ such that $\lambda_n+\varepsilon\in\Lambda_-(\delta)$, we use the approximation of $a_{n}$ from Lemma~\ref{lem:asymptotics_an} 
\[
\left|a_n\frac{\gamma_n\gamma_{n+1}}{\varepsilon^2}-
	\sinc(\pi F(-\lambda_n))^2\right|
	\leq C(\delta)(\varepsilon^{r}+\varepsilon^{1-2r}).
\]
We  also note that $x\mapsto\operatorname{sinc}(x)=\frac{\sin(x)}{x}$ is Lipschitz on $\R$ and $F$ is $C(\delta)$-Lipschitz on $[-\max(u_0)+\delta,-\min(u_0)-\delta]$. Therefore, for every $n,p$ satisfying $|n-p|\leq\varepsilon^{-r}$ and $\lambda_n+\varepsilon,\lambda_p+\varepsilon\in\Lambda_-(\delta)$, the  use of inequality~\eqref{eq:lambdanp} leads to
\[
|\operatorname{sinc}(\pi F(-\lambda_n))-\operatorname{sinc}(\pi F(-\lambda_p))|	
	\leq C(\delta)|\lambda_p-\lambda_n|
	\leq C'(\delta)|n-p|\varepsilon
	\leq C'(\delta)\varepsilon^{1-r}.
\]
We have proven that for $|n-p|\leq\varepsilon^{-r}$ satisfying $\lambda_n+\varepsilon,\lambda_p+\varepsilon\in \Lambda_-(\delta)$, we have
\[
\left|a_p\frac{\gamma_p\gamma_{p+1}}{\varepsilon^2}-
	\sinc(\pi F(-\lambda_n))^2\right|
	\leq C(\delta)(\varepsilon^{r}+\varepsilon^{1-2r}).
\]
Using the Lipschitz properties of $x_+$ and $x_-$ in Corollary~\ref{cor:asymptotics} and the formula~\eqref{eq:phase} for the phase constants, we even have
\[
\left|a_p\frac{\gamma_p\gamma_{p+1}}{\varepsilon^2}e^{i(\theta_{p+1}-\theta_p)}-
	\sinc(\pi F(-\lambda_n))^2e^{i(\theta_{n+1}-\theta_n)}\right|
	\leq C(\delta)(\varepsilon^{r}+\varepsilon^{1-2r}).
\]
Since $\sinc$ is nonnegative on $[0,\pi]$, we get by summation that
\begin{multline*}
\left|\widehat{u_0^{\varepsilon}}(k)
	-  \varepsilon\sum_{\substack{n_1,\dots,n_{k+1}\geq 1,\; n_1=n_{k+1} \\ |n_i-n_{i+1}|\leq \varepsilon^{-r},\;  \lambda_{n_i}+\varepsilon\in\Lambda_-(\delta)}}\sinc(\pi F(-\lambda_{n_1}))^ke^{ik(\theta_{n_1+1}-\theta_{n_1})}\prod_{i=1}^{k}  \frac{ F(-\lambda_{n_1})}{n_{i+1}-n_i-F(-\lambda_{n_1})}\right|\\
		\leq C\delta+C(\delta)\varepsilon(\varepsilon^{r-c}+\varepsilon^{1-2r}) \sum_{\substack{n_1,\dots,n_{k+1}\geq 1,\; n_1=n_{k+1} \\ |n_i-n_{i+1}|\leq \varepsilon^{-r},\;  \lambda_{n_i}+\varepsilon\in\Lambda_-(\delta)}}\prod_{i=1}^{k}  \frac{ F(-\lambda_{n_1})}{|n_{i+1}-n_i-F(-\lambda_{n_1})|} .
\end{multline*}
 We use the absolute convergence from the Toeplitz sum in Lemma~\ref{lem:Toeplitz} to treat the sum over indexes $n_2,\dots,n_{k+1}$. Moreover, we know that there are no more than $C/\varepsilon$ terms in the sum over $n_1$, so that
\begin{multline}\label{ineq:n_to_n1}
\left|\widehat{u_0^{\varepsilon}}(k)
	-  \varepsilon\sum_{\substack{n_1,\dots,n_{k+1}\geq 1,\; n_1=n_{k+1} \\ |n_i-n_{i+1}|\leq \varepsilon^{-r},\; \lambda_{n_i}+\varepsilon\in\Lambda_-(\delta) }}
	\sinc(\pi F(-\lambda_{n_1}))^ke^{ik(\theta_{n_1+1}-\theta_{n_1})}
	\prod_{i=1}^{k} \frac{ F(-\lambda_{n_1})}{n_{i+1}-n_i-F(-\lambda_{n_1})}\right|\\
		\leq C(\delta)(\varepsilon^{r-c}+\varepsilon^{1-2r})+C\delta.
\end{multline}
Finally, we make the change of variable $n=n_1$ and $m_i=n_{i+1}-n_i$ for $1\leq i\leq k$. It only remains to use the Toeplitz identity from Lemma~\ref{lem:Toeplitz} to conclude that
\begin{multline*}
\left|\widehat{u_0^{\varepsilon}}(k)
	-  \varepsilon\sum_{\substack{n\geq 1\\ \lambda_n+\varepsilon\in\Lambda_-(\delta)}}\left(\frac{\sin(\pi F(-\lambda_{n}))}{\pi }\right)^{k}e^{ik(\theta_{n+1}-\theta_{n})}\frac{(-1)^k\pi^{k-1}\sin(k\pi F(-\lambda_n))}{k F(-\lambda_n)\sin(\pi F(-\lambda_n))^k}\right|\\
	\leq C(\delta)(\varepsilon^{r-c}+\varepsilon^{1-2r})+R(\varepsilon)+C\delta,
\end{multline*}
where $R(\varepsilon)$ is bounded by the remainder term in the Toeplitz identity from Lemma~\ref{lem:Toeplitz}
\begin{align*}
R(\varepsilon)
	&=C\varepsilon\sum_{\substack{n\geq 1\\ \lambda_n+\varepsilon\in\Lambda_-(\delta)}}
	\sum_{\substack{m_1,\dots,m_{k+1}\geq 1\\ \exists i, |m_i|> \varepsilon^{-r}}}\prod_{i=1}^k\frac{1}{|m_i-F(\lambda_{n_1})|} \left(\frac{\sin(\pi F(-\lambda_{n}))}{\pi }\right)^{k}\frac{\pi^{k-1}|\sin(k\pi F(-\lambda_n))|}{k F(-\lambda_n)\sin(\pi F(-\lambda_n))^k} \\
	&\leq C'\sum_{\substack{m_1,\dots,m_{k+1}\geq 1\\ \exists i, |m_i|> \varepsilon^{-r}}}\prod_{i=1}^k\frac{1}{|m_i-F(\lambda_{n_1})|}.
\end{align*}
We conclude by using that $(-1)^ke^{ik(\theta_{n+1}-\theta_n)}=e^{-ik\frac{x_+(-\lambda_n)+x_-(-\lambda_n)}{2}}$.
\end{proof}

\subsection{Link with Burgers equation and time evolution}\label{subsection:time_evolution}

In this part, we deduce an approximation of the $k$-th Fourier coefficient of the solution $u^{\varepsilon}(t)$ at time $t$ which is coherent with Proposition~\ref{prop:formule_uk}.

\begin{thm}[Fourier coefficients and Burgers equation]\label{thm:link_burgers}
 Let $k\in\Z$. Let $u^{\varepsilon}$ be the solution to~\eqref{eq:bo} with initial data $u_0^{\varepsilon}$. For every $T>0$, there exists $\varepsilon_0(\delta,T)$ such that for every $\varepsilon<\varepsilon_0(\delta,T)$ and $t\in[0,T]$, there holds
\[
\left|\widehat{u^{\varepsilon}(t)}(k)
	+\frac{i}{2 k\pi}\int_{\min(u_0)}^{\max(u_0)}e^{-ik(x_+(\eta)+2\eta t)}-e^{-ik(x_-(\eta)+2\eta t)}\d\eta\right|
		\leq C\delta.
\]
\end{thm}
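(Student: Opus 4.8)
The plan is to run the machinery behind Theorem~\ref{prop:Riemann_CV} with $u_0^{\varepsilon}$ replaced by the solution $u^{\varepsilon}(t)$, using the integrability of~\eqref{eq:bo}. Two facts drive the argument. First, since~\eqref{eq:bo} has a Lax pair, the spectrum of $L_{u^{\varepsilon}(t)}(\varepsilon)$ is $t$-independent: $\lambda_n(u^{\varepsilon}(t);\varepsilon)=\lambda_n(u_0^{\varepsilon};\varepsilon)=:\lambda_n$, so $\gamma_n,\kappa_n,\mu_n,a_n$ and the eigenvalue bounds of items~1--3 of Definition~\ref{def:approximate_Birkhoff} are preserved along the flow, and $F$, $x_{\pm}$ remain those of $u_0$ at every time. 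Second, in Birkhoff coordinates the flow is linear, $\zeta_n(u^{\varepsilon}(t);\varepsilon)=e^{it\Omega_n(\varepsilon)}\zeta_n(u_0^{\varepsilon};\varepsilon)$, with the frequencies $\Omega_n(\varepsilon)$ functions of the conserved spectrum; the key identity, which I take from the explicit Benjamin--Ono frequency map of~\cite{GerardKappeler2019} rescaled to parameter $\varepsilon$, is $\Omega_{n+1}(\varepsilon)-\Omega_n(\varepsilon)=2\lambda_n$. Hence $\theta_n(u^{\varepsilon}(t);\varepsilon)=\theta_n(u_0^{\varepsilon};\varepsilon)+t\,\Omega_n(\varepsilon)$ modulo $2\pi$, and by~\eqref{def:M} each $M_{n,p}(u^{\varepsilon}(t);\varepsilon)$ equals $M_{n,p}(u_0^{\varepsilon};\varepsilon)$ times a unimodular factor.

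Since $|M_{n,p}(u^{\varepsilon}(t);\varepsilon)|=|M_{n,p}(u_0^{\varepsilon};\varepsilon)|$, Proposition~\ref{prop:M_ACV} applies verbatim to $\widehat{u^{\varepsilon}(t)}(k)=\varepsilon\Tr(M(u^{\varepsilon}(t);\varepsilon)^k)$ (Proposition~\ref{prop:u-trM}) and, up to $C(\delta)\varepsilon^r+C\delta$, reduces it to the sum over near-diagonal indices $|n_i-n_{i+1}|\le\varepsilon^{-r}$ with $\lambda_{n_i}+\varepsilon\in\Lambda_-(\delta)$. In that sum, the only change from the proof of Theorem~\ref{prop:Riemann_CV} is that, after re-indexation, the phase $\prod_{i=1}^{k}e^{i(\theta_{n_i+1}-\theta_{n_i})}$ acquires the factor $\prod_{i=1}^{k}e^{it(\Omega_{n_i+1}-\Omega_{n_i})}=e^{2it\sum_i\lambda_{n_i}}$. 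Chaining~\eqref{eq:lambdanp} over consecutive indices gives $|\lambda_{n_i}-\lambda_{n_1}|\le C(\delta,k)\varepsilon^{1-r}$ on the restricted set, so $e^{2it\sum_i\lambda_{n_i}}=e^{2ik\lambda_{n_1}t}+O(C(\delta,k,T)\varepsilon^{1-r})$ uniformly on $[0,T]$; summed against the weights bounded in Proposition~\ref{prop:M_ACV}, this error contributes $\le C(\delta,k,T)\varepsilon^{1-r}$. The surviving factor $e^{2ik\lambda_{n_1}t}$ is constant in the inner summation over $n_2,\dots,n_{k+1}$, so Lemmas~\ref{lem:asymptotics} and~\ref{lem:asymptotics_an}, the Toeplitz identity (Lemma~\ref{lem:Toeplitz}), and the substitution $(-1)^ke^{ik(\theta_{n+1}-\theta_n)}\mapsto e^{-ik(x_+(-\lambda_n)+x_-(-\lambda_n))/2}$ apply unchanged and yield
\[
\Big|\widehat{u^{\varepsilon}(t)}(k)-\varepsilon\!\!\sum_{\substack{n\ge1\\ \lambda_n+\varepsilon\in\Lambda_-(\delta)}}\!\! e^{2ik\lambda_n t}\,\sinc\big(k\pi F(-\lambda_n)\big)\,e^{-ik\frac{x_+(-\lambda_n)+x_-(-\lambda_n)}{2}}\Big|\le C(\delta)(\varepsilon^{r-c}+\varepsilon^{1-2r})+R(\varepsilon)+C\delta .
\]

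Finally I would recognise this as a Riemann sum. With $\eta=-\lambda_n$ and $g_t(\eta)=e^{-2ik\eta t}\sinc(k\pi F(\eta))e^{-ik(x_+(\eta)+x_-(\eta))/2}$, Lemma~\ref{lem:F} and the $T$-uniform Lipschitz bound on $\eta\mapsto e^{-2ik\eta t}$ show that $g_t$ is $C(\delta,k,T)$-Lipschitz, with $|g_t|\le1$, on $[\min u_0+\delta,\max u_0-\delta]$; item~1 of Definition~\ref{def:approximate_Birkhoff} gives $\int_{-\lambda_{n+1}}^{-\lambda_n}F=\varepsilon+O(C(\delta)\varepsilon\sqrt\varepsilon)$ and~\eqref{eq:lambdanp} gives $|\lambda_{n+1}-\lambda_n|\le C(\delta)\varepsilon$ on the small-eigenvalue range, so a termwise comparison yields $\varepsilon\sum_{n:\lambda_n+\varepsilon\in\Lambda_-(\delta)}g_t(-\lambda_n)=\int_{\min u_0}^{\max u_0}F(\eta)g_t(\eta)\,\d\eta+C(\delta,k,T)\sqrt\varepsilon+C\delta$, the last $C\delta$ covering the $O(\delta)$-long endpoint gaps where $|Fg_t|\le1$. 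Since $\pi F(\eta)=\tfrac12(x_+(\eta)-x_-(\eta))$, one has $F(\eta)\sinc(k\pi F(\eta))=\tfrac1{k\pi}\sin\big(\tfrac k2(x_+(\eta)-x_-(\eta))\big)$, and expanding $\sin$ into exponentials exactly as in the proof of Proposition~\ref{prop:formule_uk}, but with $x_\pm(\eta)$ replaced by $x_\pm(\eta)+2\eta t$, turns $\int Fg_t\,\d\eta$ into the integral appearing in the statement. Gathering all errors — positive powers of $\varepsilon$ times $C(\delta,k,T)$, or $R(\varepsilon)\to0$, plus the irreducible $C\delta$ — gives the estimate for $\varepsilon<\varepsilon_0(\delta,T)$.

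The main obstacle is the frequency identity $\Omega_{n+1}(\varepsilon)-\Omega_n(\varepsilon)=2\lambda_n$: it is precisely this arithmetic — rather than, e.g., $\Omega_n$ quadratic in $\lambda_n$, whose increments would carry a non-negligible $\gamma_n\lambda_n/\varepsilon$-type term — that makes the angle differences advance at speed $2\lambda_n=-2\eta$ and hence reproduces the Burgers characteristics $x_\pm(\eta)+2\eta t$. A secondary, technical point is that the $O(\varepsilon^{1-r})$ gap between $e^{2it\sum_i\lambda_{n_i}}$ and $e^{2ik\lambda_{n_1}t}$ must be absorbed by the only conditionally small Toeplitz weights, which forces one to invoke the absolute bound of Proposition~\ref{prop:M_ACV} and to keep $T$ in the constants, yielding uniformity on compact time intervals.
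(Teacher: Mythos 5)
Your proposal follows essentially the same route as the paper: isospectrality of the Lax operator along the flow, linear evolution of the Birkhoff angles with frequency increments close to $2\lambda_n$ so that each $M_{n,p}$ only picks up a unimodular factor, re-running the trace/Toeplitz machinery of Proposition~\ref{prop:M_ACV} and Theorem~\ref{prop:Riemann_CV}, and then passing to the limit in the Riemann sum with $2\pi F(\eta)=x_+(\eta)-x_-(\eta)$ to recover the shifted characteristics $x_\pm(\eta)+2\eta t$. The only slip is the exact identity: the paper computes $\omega_{n+1}-\omega_n=2\lambda_n+\varepsilon$ (not $2\lambda_n$), but the extra $\varepsilon$ only produces a factor $e^{ik\varepsilon t}=1+O(k\varepsilon T)$, which is absorbed in your error terms, so the argument is unaffected.
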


\begin{proof}Fix $0<c<r<1$. 
We first consider the function $u_0^{\varepsilon}$ without any time evolution. We justify that we can pass to the limit in the Riemann sum
\begin{equation*}
\left|\widehat{u_0^{\varepsilon}}(k)
	-  \varepsilon\sum_{\substack{n\geq 1\\ \lambda_{n}+\varepsilon\in\Lambda_-(\delta)}} e^{-ik\frac{x_+(-\lambda_n)+x_-(-\lambda_n)}{2}}\sinc(k\pi F(-\lambda_n))\right|
	\leq C(\delta)(\varepsilon^{r-c}+\varepsilon^{1-2r})+R(\varepsilon)+C\delta.
\end{equation*}
The function $\eta\mapsto\sinc(k\pi F(\eta))$ is $\classeC^1$ on $[-\beta+\delta,\beta-\delta]$. Moreover, in the region $\eta+\varepsilon\in\Lambda_-(\delta)$, there holds $F(\eta)\geq 1/C(\delta)$. Since $\lambda_n+\varepsilon\in\Lambda_-(\delta)$, Corollary~\ref{cor:asymptotics} implies that $|\lambda_{n+1}-\lambda_n|\leq C(\delta)\varepsilon$, so that the mesh is tending to zero as $\varepsilon\to0$. More precisely, there holds thanks to~\eqref{eq:xi_np}, \eqref{eq:lambdanp} and~\eqref{eq:Fxi} that
\[
|(\lambda_{n+1}-\lambda_n)F(-\lambda_n)-\varepsilon|
	\leq C(\delta)\varepsilon\sqrt{\varepsilon},
\]
 so that $F(-\lambda)$ is the distribution function of the $\eta=-\lambda$'s. There are at most $\frac{C}{\varepsilon}$ indexes $n$ such that $\lambda_n+\varepsilon\in\Lambda_-(\delta)$, therefore we get by summation
\begin{multline*}
\left|\widehat{u_0^{\varepsilon}}(k)
	-  \sum_{\substack{n\geq 1\\ \lambda_{n}+\varepsilon\in\Lambda_-(\delta)}} e^{-ik\frac{x_+(-\lambda_n)+x_-(-\lambda_n)}{2}}\sinc(k\pi F(-\lambda_n))F(-\lambda_n)(\lambda_{n+1}-\lambda_{n})\right|\\
	\leq C(\delta)(\varepsilon^{r-c}+\varepsilon^{1-2r})+R(\varepsilon)+C\delta.
\end{multline*}
Passing to the limit $\varepsilon\to0$, this leads to
\[
\left|\widehat{u_0^{\varepsilon}}(k)
	+\frac{1}{k\pi}\int_{\min(u_0)+\delta}^{ \max(u_0)-\delta}e^{-ik\frac{x_+(\eta)+x_-(\eta)}{2}}\sin(k\pi F(\eta ))\d\eta\right|
		\leq C\delta.
\]
Finally, we use the definition $2\pi F(\eta)=x_+(\eta)-x_-(\eta)$ and simplify
\[
\left|\widehat{u_0^{\varepsilon}}(k)
	+\frac{1}{2i k\pi}\int_{\min(u_0)+\delta}^{\max(u_0)-\delta}e^{-ikx_-(\eta)}-e^{-ikx_+(\eta)}\d\eta\right|
	\leq C\delta.
\]
Given that the integrand is bounded by $2$, one can remove the $\delta$ in the integration bounds up to increasing $C$, and hence we get the result.

\paragraph{Time evolution} Let us now add the time into account. Let $u^{\varepsilon}$ be the solution to~\eqref{eq:bo} with initial data $u^{\varepsilon}(t=0)=u_0^{\varepsilon}$. We check that $M_{n,p}(u^{\varepsilon}(t);\varepsilon)$ becomes 
\[
M_{n,p}(u^{\varepsilon}(t);\varepsilon)
	=M_{n,p}(u_0^{\varepsilon};\varepsilon)^{i(\omega_{n+1}(u_0^{\varepsilon};\varepsilon)-\omega_p(u_0^{\varepsilon}; \varepsilon))t}.
\]
To find the formula for $\omega_n(u_0^{\varepsilon};\varepsilon)$, one can observe that
\[
v(t,x):=\frac{1}{\varepsilon}u^{\varepsilon}\left(\frac{t}{\varepsilon},x\right)
\]
is the solution to~\eqref{eq:bo} with parameter $\varepsilon=1$ and initial data $u_0^{\varepsilon}/\varepsilon$. But $f_n(v(t);1)=f_n(u^{\varepsilon}(t/\varepsilon);\varepsilon)$ so that since $\gamma_n(u_0^{\varepsilon};\varepsilon)=\varepsilon\gamma_n(v_0^{\varepsilon};1)$ and
\(
\zeta_n(u^{\varepsilon}(t);\varepsilon)
	=\sqrt{\varepsilon}\zeta_n(v(\varepsilon t);1),
\)
then Proposition 8.1 from~\cite{GerardKappeler2019} implies
\[
\zeta_n(u^{\varepsilon}(t);\varepsilon)
	=\sqrt{\varepsilon}\zeta_n(v_0^{\varepsilon};1)e^{i\omega_n(v_0^{\varepsilon};1)\varepsilon t}.
\]
Therefore,
\begin{align*}
\omega_n(u_0^{\varepsilon};\varepsilon)
	&=\varepsilon\omega_n(v_0^{\varepsilon};1)\\
	&=\varepsilon\left( n^2-2\sum_{k=n+1}^{\infty}\min(k,n)\gamma_k(v_0^{\varepsilon};1)\right)\\
	&=\varepsilon n^2-2\sum_{k=n+1}^{\infty}\min(k,n)\gamma_k(u_0^{\varepsilon};\varepsilon).
\end{align*}

As a consequence, we get the approximate solution at time $t$ by replacing every phase constant $\theta_n$ by $\theta_n+\omega_nt$, with $\omega_n=\omega_n(u_0^{\varepsilon};\varepsilon)$. Let us establish the Lipschitz properties of these new phase constants.
We have
\begin{equation*}
\omega_{n+1}-\omega_n
	=\varepsilon (2n+1)-2\sum_{k=n+1}^{\infty}\gamma_k
	= 2\lambda_n+\varepsilon.
\end{equation*}
But when $\lambda_n+\varepsilon,\lambda_p+\varepsilon\in\Lambda_-(\delta)$ and $|p-n|\leq \varepsilon^{-r}$, we have
\[
(\omega_{p+1}-\omega_p)-(\omega_{n+1}-\omega_{n})=2(\lambda_p-\lambda_n).
\]
Using inequality~\eqref{eq:lambdanp}, one deduces that
\[
\left|(\omega_{p+1}-\omega_p)-2\lambda_n\right|\leq C(\delta)\varepsilon^{1-r}.
\]
As a consequence, one has
\[
|\exp(i(\omega_{p+1}-\omega_p)t)-\exp(2i\lambda_n t)|
	\leq C(\delta)\varepsilon^{1-r}t.
\]
We use our above approximation approach by replacing the phase factors $e^{i(\theta_{n_{i+1}}-\theta_{n_i})}$ by  their time evolution $e^{i(\theta_{n_{i+1}}-\theta_{n_i}+(\omega_{n_{i+1}}-\omega_{n_i})t)}$ in the series.
Passing to the limit in the Riemann sum, for every $T>0$, there exists $\varepsilon_0(\delta,T)$ such that for every $\varepsilon<\varepsilon_0(\delta,T)$ and $t\in[0,T]$, there holds
\[
\left|
\widehat{u^{\varepsilon}(t)}(k)
	-\frac{i}{2 k\pi}\int_{\min(u_0)+\delta}^{\max(u_0)-\delta}e^{-ik(x_-(\eta)+2\eta t)}-e^{-ik(x_+(\eta)+2\eta t)}\d\eta\right|\leq C\delta.
\]
Given that the integrand is bounded, one can remove the $\delta$ in the integration bounds up to increasing $C$, and hence we get the result.
\end{proof}

We now make the link between Theorem~\ref{thm:link_burgers} and the Fourier coefficients of the signed sum of branches $u_{alt}^B$ for the multivalued solution $u^B$ to the Burgers equation obtained with the method of characteristics, see Figure~\ref{fig:multivalued2}. Every point $u^B$ is an image of the solution at time $t$ with abscissa $x$ as soon as it solves the implicit equation
\[
u^B=u_0(x-2u^Bt).
\]
On the real line (see a more detailed description in~\cite{MillerXu2011}), new sheets are formed at the breaking points $(t_{\xi},x_{\xi})$ such that $\xi$ is an inflection point $u_0'(\xi)\neq 0$, $u_0''(\xi)=0$, and
\[
(t_{\xi},x_{\xi})=\left(\frac{-1}{2u_0'(\xi)},\xi-\frac{u_0(\xi)}{u_0'(\xi)}\right).
\]
In the case of a single well potential, there are two such inflection points $\xi_{\pm}$, for which $u_0''(\xi_{\pm})=0$ and $u_0'''(\xi_{\pm})\neq 0$.  We assume that $u_0'(\xi_+)<0<u_0'(\xi_-)$. Right after the positive breaking time $t_{\xi_+}=\frac{-1}{2u_0'(\xi_+)}$, two new branches emerge, so that there are three branches in total. Because of the periodicity, this can lead to more branches as $t$ increases, see Figure~\ref{fig:multivalued}, that we denote $u_0^B(t,x)<\dots<u_{2P(t,x)}^B(t,x)$. We have defined the signed sum of branches in~\eqref{eq:u_alt} as
\[
u^B_{alt}(t,x)=\sum_{n=0}^{2P(t,x)}(-1)^nu_n^B(t,x).
\]
These branches are described by two (actual) functions $v_0^B$ and $v_1^B$ defined on subintervals $[X_-(t)-2\pi,X_+(t)]$ and $[X_-(t),X_+(t)]$ of the real line, see Figure~\ref{fig:multivalued2}.

More precisely, we consider one period of the initial data $u_0$, for which we follow the method of characteristics on $\R$. Then the solution to the corresponding multivalued Burgers equation on $\R$ has between $0$ and $3$ branches
\[
u(y+2u_0(y)t)=u_0(y)
\]
\[
u(x_{\pm}(\eta)+2\eta t)=\eta.
\]
Let us denote $X_-(t)=x_-(\eta_-(t))+2\eta_-(t)t\leq X_+(t)=x_+(\eta_+(t))+2\eta_+(t)t\in\R$ the branching points at time $t$ such that $\eta_-(t)\leq 0$ and $\eta_+(t)\geq 0$. Then one can express these branches in terms of two branches. The first branch $v_0^B$ is well-defined on $[a(v_1^B),b(v_1^B)]=[X_-(t)-2\pi,X_+(t)]$, the second one $v_1^B$ is well-defined on $[a(v_0^B),b(v_0^B)]=[X_-(t),X_+(t)]$ as in Figure~\ref{fig:multivalued2}. 

\begin{figure}
\begin{center}
\includegraphics[scale=0.4]{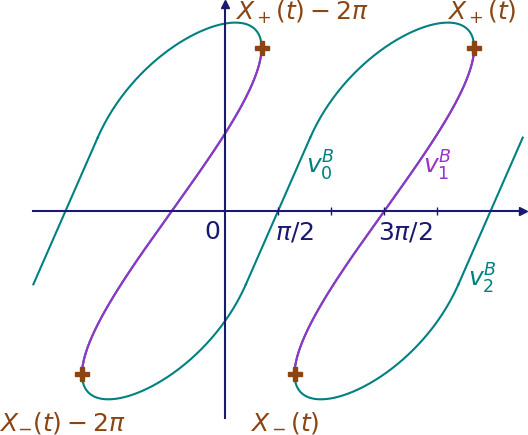}
\end{center}
\caption{Sketch of the multivalued solution of the Burgers equation obtained by the method of characteristics, with initial data $u_0(x)=-\beta\cos(x)$}\label{fig:multivalued2}
\end{figure}

In the periodic case, this leads to more branches. But as they appear two by two, the odd branches always  correspond to the branch $v_1^B$ and the even ones to the  branch $v_0^B$. As a consequence, the graphs of $u_0^B,u_2^B,\dots,u_{2P}^B$ combined are the graph of $v_0^B$ taken modulo $2\pi$ in space. The increasing part corresponds to abscissa $x=x_-(\eta)$ for some $\eta$ whereas the decreasing part corresponds to abscissa $x=x_+(\eta)$ for some $\eta$. The graphs of $u_1^B,u_3^B,\dots,u_{2P-1}^B$ combined are the graph of $v_1^B$, modulo $2\pi$, they are increasing and correspond to abscissa $x=x_+(\eta)$ for some $\eta$.
For every $\eta\in(\min(u_0),\max(u_0))$, there are exactly two antecedents $\eta=u_{n}^B(t,x_-(t,\eta))=v^B_{n\mod 2}(t,x_-(t,\eta))$ and $\eta=u_{m}^B(t,x_+(t,\eta))=v_{m\mod 2}^B(t,x_+(t,\eta))$.

\begin{prop}[Fourier coefficients of the multivalued solution to the Burgers equation]\label{prop:fourier_u_alt}
Let $u_0$ be a single well potential. Then there holds
\[
\widehat{u^B_{alt}(t)}(k)
	=-\frac{i}{2 k\pi}\int_{\min(u)}^{\max(u)}e^{-ik(x_+(\eta)+2\eta t)}-e^{-ik(x_-(\eta)+2\eta t)}\d\eta.
\]
\end{prop}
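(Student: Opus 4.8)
The plan is to integrate by parts in $x$, and then remove the multivaluedness by reparametrizing the graph of the multivalued Burgers solution with the Lagrangian variable — the alternating signs in $u^B_{alt}$ being exactly what makes this reparametrization consistent with orientations. First, $u^B_{alt}(t,\cdot)$ is $2\pi$-periodic, and at each of the finitely many abscissas $x$ where a pair of consecutive branches $u_n^B,u_{n+1}^B$ merges their contributions $(-1)^n u_n^B+(-1)^{n+1}u_{n+1}^B$ cancel in the limit, so $u^B_{alt}(t,\cdot)$ is continuous; being moreover piecewise $\classeC^1$ with integrable derivative, it is absolutely continuous. An integration by parts with vanishing boundary term therefore gives
\[
\widehat{u^B_{alt}(t)}(k)=\frac{1}{2\pi}\int_0^{2\pi}u^B_{alt}(t,x)\,e^{-ikx}\,\d x=\frac{-i}{2k\pi}\int_0^{2\pi}\partial_x u^B_{alt}(t,x)\,e^{-ikx}\,\d x,
\]
so it is enough to compute the last integral.

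Fix $t$ and set $\Psi_t(y):=y+2u_0(y)t$, a $\classeC^3$ lift of a degree-one self-map of the circle, whose derivative $\Psi_t'(y)=1+2tu_0'(y)$ has finitely many zeros because $u_0$ is a single well. For a generic $x$ (the finitely many abscissas at which a branch is tangent are negligible in the integral), and for each branch index $n$, let $y_n=y_n(t,x)\in[0,2\pi)$ be the Lagrangian coordinate of the $n$-th branch, characterized by $u_0(y_n)=u_n^B$ and $\Psi_t(y_n)\equiv x\pmod{2\pi}$; implicit differentiation of these relations gives $\partial_x u_n^B=u_0'(y_n)/\Psi_t'(y_n)$. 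Since $\phi_x\colon u\mapsto u-u_0(x-2ut)$ tends to $\pm\infty$ as $u\to\pm\infty$, its ordered zeros $u_0^B<\dots<u_{2P}^B$ satisfy $\operatorname{sign}(\phi_x'(u_n^B))=(-1)^n$, that is $(-1)^n=\operatorname{sign}(\Psi_t'(y_n))$. Re-indexing the branches above $x$ by the corresponding $y\in[0,2\pi)$ with $\Psi_t(y)\equiv x$, and using that $e^{-ik\Psi_t(y)}=e^{-ikx}$ for $k\in\Z$, we get
\[
\partial_x u^B_{alt}(t,x)\,e^{-ikx}=\sum_{\substack{y\in[0,2\pi)\\ \Psi_t(y)\equiv x}}\operatorname{sign}(\Psi_t'(y))\,\frac{u_0'(y)}{\Psi_t'(y)}\,e^{-ik\Psi_t(y)}=\sum_{\substack{y\in[0,2\pi)\\ \Psi_t(y)\equiv x}}\frac{u_0'(y)\,e^{-ik\Psi_t(y)}}{|\Psi_t'(y)|}.
\]
Integrating over $x\in[0,2\pi)$ and applying the change-of-variables formula for the degree-one $\classeC^3$ map $\Psi_t$ (split $[0,2\pi]$ into its finitely many intervals of monotonicity and perform the ordinary substitution on each) yields
\[
\int_0^{2\pi}\partial_x u^B_{alt}(t,x)\,e^{-ikx}\,\d x=\int_0^{2\pi}u_0'(y)\,e^{-ik(y+2u_0(y)t)}\,\d y.
\]

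It remains to evaluate the right-hand side exactly as in the proof of Proposition~\ref{prop:formule_uk}, with $x_\pm(\eta)$ there replaced by $x_\pm(\eta)+2\eta t$: one splits $[0,2\pi]=[0,x_{\max}]\cup[x_{\max},2\pi]$ and substitutes $\eta=u_0(y)$, i.e.\ $y=x_-(\eta)$ on the increasing part and $y=x_+(\eta)$ on the decreasing part, with $\d\eta=u_0'(y)\,\d y$ in both; together with the prefactor $-\tfrac{i}{2k\pi}$ this gives precisely the announced identity. The points requiring care are the vanishing of the boundary and interior contributions in the first integration by parts — which is exactly where the cancellation of merging branch pairs is used, and is the reason the signed sum is the natural object here — and the sign bookkeeping in the reparametrization, where the orientation reversal on each fold $\{\Psi_t'<0\}$ is absorbed by the factor $(-1)^n=\operatorname{sign}(\Psi_t')$. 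Alternatively, the identity can be obtained by unfolding the two periodizations of the genuine branches $v_0^B$ on $[X_-(t)-2\pi,X_+(t)]$ and $v_1^B$ on $[X_-(t),X_+(t)]$ and integrating by parts on each interval separately, the boundary terms cancelling in the difference since $v_0^B=v_1^B=\eta_\pm(t)$ at $X_\pm(t)$ and $v_0^B$ matches across the two ends of its interval by $2\pi$-periodicity.
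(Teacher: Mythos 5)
Your argument is correct in substance, but it follows a genuinely different route from the paper's. The paper proves the identity by decomposing $u^B_{alt}(t,\cdot)$ into the two single-valued branches $v_0^B$ (on $[X_-(t)-2\pi,X_+(t)]$) and $v_1^B$ (on $[X_-(t),X_+(t)]$), applying the integration by parts of Proposition~\ref{prop:formule_uk} to each, and cancelling the resulting boundary terms using $a(v_0^B)\equiv a(v_1^B)$, $b(v_0^B)\equiv b(v_1^B)\ (\mathrm{mod}\ 2\pi)$ together with the fact that the union of the two graphs modulo $2\pi$ is the transported graph of $u_0$. You instead integrate by parts globally --- after observing that $u^B_{alt}(t,\cdot)$ is continuous because merging consecutive branches carry opposite signs --- and then return to the Lagrangian variable through $\Psi_t(y)=y+2u_0(y)t$, the key point being $(-1)^n=\operatorname{sign}(\Psi_t'(y_n))$, which converts the alternating sum into the unsigned change-of-variables formula for the degree-one circle map $\Psi_t$. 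This avoids all bookkeeping of how many branch pairs have appeared and of boundary terms (both are absorbed into the single continuity statement), and it exhibits the structural reason why the signed sum is the natural object; the paper's route, by contrast, stays closer to Proposition~\ref{prop:formule_uk} and to the geometric description of $v_0^B,v_1^B$ it has already set up.

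Two points to tighten. First, you use implicitly that for a.e.\ $x$ the branches over $x$ are in bijection with the preimages $y\in[0,2\pi)$ of $x$ under $\Psi_t$ modulo $2\pi$ (no branch missed, none counted twice); this holds because two distinct preimages give the same branch value only if $x_+(\eta)-x_-(\eta)\in 2\pi\Z$, i.e.\ $\eta\in\{\min u_0,\max u_0\}$, which concerns finitely many abscissas --- worth one sentence. Second, the final sign: your substitution ($y=x_-(\eta)$ on the increasing part, $y=x_+(\eta)$ on the decreasing part) yields
\[
\widehat{u^B_{alt}(t)}(k)=\frac{-i}{2k\pi}\int_{\min u_0}^{\max u_0}\left(e^{-ik(x_-(\eta)+2\eta t)}-e^{-ik(x_+(\eta)+2\eta t)}\right)\d\eta,
\]
i.e.\ the displayed statement with $x_+$ and $x_-$ exchanged, so you should not assert that it lands ``precisely'' on the announced identity. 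This is not a defect of your reasoning: the same exchange already separates the statement of Proposition~\ref{prop:formule_uk} from its own proof (test $u_0(x)=-\beta\cos x$, $k=1$: the Fourier coefficient is $-\beta/2$, while the displayed formula gives $+\beta/2$), and it propagates verbatim to Proposition~\ref{prop:fourier_u_alt} and Theorem~\ref{thm:link_burgers}, cancelling in Theorem~\ref{thm:main}; but a careful write-up should note which convention it ends up with rather than claim exact agreement.
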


\begin{proof}
The union of graphs and the periodicity lead to
\begin{align*}
\widehat{u^B_{alt}(t)}(k)
	&=\sum_{n=0}^{2P(t,x)}(-1)^n\widehat{u_n^B}(t,x)\\
	&=\sum_{n=0}^{2P(t,x)}(-1)^n\int_{a(u_n^B)}^{b(u_n^B)}u_n^B(x)e^{-ikx}\d x\\
	&=\int_{a(v_0^B)}^{b(v_0^B)}v_0^B(x)e^{-ikx}\d x
	-\int_{a(v_1^B)}^{b(v_1^B)}v_1^B(x)e^{-ikx}\d x.
\end{align*}
The formula for single well  functions in Proposition~\ref{prop:formule_uk} becomes
\[
\widehat{v_0^B}(k)=i\frac{b(v_0^B)-a(v_0^B)}{k}-\frac{i}{2k\pi}\left(\int_{(x_+(t,\eta),\eta)\in \Gr(v_0^B)}e^{-ikx_+(t,\eta)}\d\eta
	-\int_{(x_-(t,\eta),\eta)\in \Gr(v_0^B)}e^{-ikx_-(t,\eta)}\d\eta\right).
\]
When $v=v_{1}^B$, there is only the $x_+$ part.
Therefore, the formula for increasing functions is written
\[
\widehat{v_1^B}(k)=i\frac{b(v_1^B)-a(v_1^B)}{k}+\frac{i}{2k\pi}\int_{(x_+(t,\eta),\eta)\in \Gr(v_1^B)}e^{-ikx_+(t,\eta)}\d\eta.
\]

Consequently, we have
\begin{multline*}
\widehat{u^B_{alt}(t)}(k)
	=i\frac{b(v_0^B)-a(v_0^B)}{k}
	-\frac{i}{2k\pi}\left(\int_{(x_+(t,\eta),\eta)\in \Gr(v_0^B)}e^{-ikx_+(t,\eta)}\d\eta
	-\int_{(x_-(t,\eta),\eta)\in \Gr(v_0^B)}e^{-ikx_-(t,\eta)}\d\eta\right)\\
	-i\frac{b(v_1^B)-a(v_1^B)}{k}
	-\frac{i}{2k\pi}\int_{(x_+(t,\eta),\eta)\in \Gr(v_1^B)}e^{-ikx_+(t,\eta)}\d\eta.
\end{multline*}
Now we use that the union of the graphs of $v_0^B$ and $v_1^B$ taken modulo $2\pi$ give the graph of $u_0$, moreover, $a(v_0^B)=a(v_1^B)  [\mod 2\pi]$, $b(v_0^B)=b(v_1^B) [\mod 2\pi]$, which leads to the result.
\end{proof}

We now justify why there exist admissible families for every single well initial data $u_0$.

\begin{lem}[Existence of admissible approximate initial data]\label{lem:exist_admissible}
For every single well initial data $u_0$, the set of admissible approximate initial data according to Definition~\ref{def:approximate_Birkhoff} is non empty.
\end{lem}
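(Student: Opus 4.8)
The plan is to construct the approximate data $u_0^{\varepsilon}$ directly from prescribed Birkhoff coordinates. Since the Birkhoff map $\Phi^{\varepsilon}\colon L^2_{r,0}(\T)\to h^{1/2}$ of~\cite{GerardKappeler2019} is a bijection, it is enough to produce, for each $\varepsilon>0$, a sequence $(\zeta_n^{\varepsilon})_{n\geq 1}\in h^{1/2}$ whose associated gap lengths $\gamma_n=|\zeta_n^{\varepsilon}|^2$, eigenvalues $\lambda_n=n\varepsilon-\sum_{k>n}\gamma_k$ and phases $\theta_n=\arg\zeta_n^{\varepsilon}$ satisfy the five conditions of Definition~\ref{def:approximate_Birkhoff}, and then to set $u_0^{\varepsilon}:=(\Phi^{\varepsilon})^{-1}((\zeta_n^{\varepsilon}))$. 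I would introduce $\Theta(\eta):=\int_{\eta}^{\max(u_0)}F(\zeta)\d\zeta$, which is a continuous strictly decreasing bijection from $(-\infty,\max(u_0)]$ onto $[0,\infty)$ with $\Theta(\max(u_0))=0$ and $\Theta(\eta)=-\eta$ for $\eta\leq\min(u_0)$ (since $F\equiv 1$ below $\min(u_0)$ and $\int_{\min(u_0)}^{\max(u_0)}F=-\min(u_0)$, the zero-mean hypothesis). I then set $\lambda_n:=-\Theta^{-1}(n\varepsilon)$ for every $n\geq 0$, so that $\gamma_n=\lambda_n-\lambda_{n-1}-\varepsilon=\int_{-\lambda_n}^{-\lambda_{n-1}}(1-F(\eta))\d\eta\geq 0$ (using $0\leq F\leq 1$), $\gamma_n=0$ as soon as $(n-1)\varepsilon\geq-\min(u_0)$ so that $u_0^{\varepsilon}$ is a finite-gap potential, and the data are self-consistent, i.e.\ $\lambda_0=-\sum_{k\geq 1}\gamma_k=-\max(u_0)$ (telescoping, using $\lambda_N=N\varepsilon$ for $N\varepsilon\geq-\min(u_0)$). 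The phases are pinned exactly: $\theta_0:=0$ and $\theta_{n+1}-\theta_n:=\pi-\tfrac12\bigl(x_+(-\lambda_n)+x_-(-\lambda_n)\bigr)$ for all $n$ with $\gamma_n,\gamma_{n+1}>0$ (using $x_{\pm}(\max(u_0))=x_{\max}$), the remaining $\zeta_n^{\varepsilon}$ being $0$; and $\zeta_n^{\varepsilon}:=\sqrt{\gamma_n}\,e^{i\theta_n}\in h^{1/2}$ since only finitely many are nonzero.

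With this choice, all the conditions of Definition~\ref{def:approximate_Birkhoff} except the $L^2$ convergence hold, most of them with error exactly $0$; this is the whole point of building the data from $F$ and $\Theta$. The small-eigenvalue quantization is the identity $\int_{-\lambda_p}^{-\lambda_n}F=\Theta(-\lambda_p)-\Theta(-\lambda_n)=(p-n)\varepsilon$. For the large eigenvalues, if $\lambda_n+\varepsilon\in\Lambda_+(\delta)$ and $\varepsilon<\delta$ then $-\lambda_n<\min(u_0)$, hence $\Theta(-\lambda_n)=\lambda_n$, forcing $\lambda_n=n\varepsilon$, so every large-eigenvalue spacing equals $\varepsilon$ (any choice of $K(\delta)$ works). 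The phase estimate~\eqref{eq:phase} holds identically wherever it is meaningful, because $\lambda_n+\varepsilon\in\Lambda_-(\delta)$ with $\varepsilon$ small forces $-\lambda_n\in(\min(u_0),\max(u_0))$ and $\gamma_n,\gamma_{n+1}>0$. For the counting condition I would note that the number of indices $n$ with $\lambda_n+\varepsilon$ inside a prescribed strip equals, up to $\pm1$, $\varepsilon^{-1}$ times the $\Theta$-length of the corresponding interval of values $-\lambda_n$: near $-\min(u_0)$ that length is comparable to $\delta$ (from $0\leq F\leq 1$ and $\Theta(\eta)=-\eta$ below $\min(u_0)$), while near $-\max(u_0)$ it is at most $\delta$ and bounded below by a constant $c(\delta)>0$, using only that $F$ is continuous and positive on $(\min(u_0),\max(u_0))$. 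This yields the required bounds $\tfrac{\delta}{C\varepsilon}$--$\tfrac{C\delta}{\varepsilon}$ near $-\min(u_0)$ and $\tfrac{1}{C(\delta)\varepsilon}$--$\tfrac{C\delta}{\varepsilon}$ near $-\max(u_0)$.

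The step I expect to require the most care is the $L^2$ convergence $\|u_0^{\varepsilon}\|_{L^2}\to\|u_0\|_{L^2}$. By the Parseval identity~\eqref{eq:Parseval}, $\|u_0^{\varepsilon}\|_{L^2}^2=2\varepsilon\sum_{n\geq 1}n\gamma_n=2\sum_{n\geq 1}\Theta(-\lambda_n)\int_{-\lambda_n}^{-\lambda_{n-1}}(1-F(\eta))\d\eta$, and since $\Theta$ varies by at most $\varepsilon$ on each interval $[-\lambda_n,-\lambda_{n-1}]$, this differs from $2\int_{\min(u_0)}^{\max(u_0)}\Theta(\eta)(1-F(\eta))\d\eta$ by at most $2\varepsilon\sum_n\gamma_n=2\varepsilon\max(u_0)\to 0$; crucially no uniform control on the (non-uniform) mesh $\lambda_n-\lambda_{n-1}$ is needed. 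It then remains to identify $2\int_{\min(u_0)}^{\max(u_0)}\Theta(1-F)\,\d\eta$ with $\|u_0\|_{L^2}^2$: writing this as $2(A-B)$ with $A=\int\Theta\,\d\eta$ and $B=\int\Theta F\,\d\eta$, one gets $B=\tfrac12\min(u_0)^2$ from $\Theta'=-F$, and $A=\int_{\min(u_0)}^{\max(u_0)}\eta F(\eta)\d\eta+\min(u_0)^2$ by Fubini and $\int F=-\min(u_0)$, so $2(A-B)=2\int\eta F\,\d\eta+\min(u_0)^2$, and the right-hand side equals $\|u_0\|_{L^2}^2$ by one integration by parts in the coarea identity $\|u_0\|_{L^2}^2=\int\eta^2(-F'(\eta))\d\eta$. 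This settles the lemma. Finally, two remarks: the inflection-point hypothesis in Definition~\ref{def:single well} is not used in this argument (consistently with the remark following that definition), and for $u_0(x)=-\beta\cos(x)$ the constant family $(u_0)_{\varepsilon}$ is admissible by Theorem~\ref{thm:asymptotics}, so the claim holds a fortiori in that case.
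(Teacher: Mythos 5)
Your construction coincides with the paper's: the eigenvalues are quantized by $\int_{-\lambda_n}^{\max(u_0)}F(\eta)\d\eta=n\varepsilon$ and the phases by $\theta_{n+1}-\theta_n=\pi-\tfrac12\bigl(x_+(-\lambda_n)+x_-(-\lambda_n)\bigr)$; your function $\Theta$ simply unifies the paper's two branches (small eigenvalues solving the quantization condition, large eigenvalues $\lambda_n=n\varepsilon$), which indeed agree because $\int_{\min(u_0)}^{\max(u_0)}F=-\min(u_0)$ by the zero-mean hypothesis. The difference lies in the verification. The paper takes conditions 1--4 of Definition~\ref{def:approximate_Birkhoff} as immediate and only checks the $L^2$ convergence, via an equidistribution argument resting on the mesh estimate $|(\lambda_{n+1}-\lambda_n)F(-\lambda_n)-\varepsilon|\leq C(\delta)\varepsilon\sqrt{\varepsilon}$ together with a layer-cake identity for $\|u_0\|_{L^2}^2$. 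You instead spell out the admissibility conditions (exact identities for the quantization and the phases, nonnegativity of the gaps, the finite-gap structure guaranteeing $(\zeta_n^\varepsilon)\in h^{1/2}$, the consistency $\lambda_0=-\sum_k\gamma_k$, and the counting condition via the $\Theta$-length of the relevant strips), and you obtain the norm convergence by rewriting $\varepsilon\sum_n n\gamma_n=\sum_n\Theta(-\lambda_n)\int_{-\lambda_n}^{-\lambda_{n-1}}(1-F)$ and bounding the discretization error by $2\varepsilon\max(u_0)$, which bypasses any mesh control near the edges where $F$ degenerates; the limiting integral is then evaluated in closed form by Fubini and integration by parts. This is a correct and in fact somewhat more robust variant of the paper's step: your closed-form constants carry the right signs (e.g.\ $\|u_0\|_{L^2}^2=2\int\eta F(\eta)\d\eta+\min(u_0)^2$), whereas the paper's displayed intermediate identities for the Riemann limit and for $\tfrac12\|u_0\|_{L^2}^2$ (namely $\int\eta F\d\eta-\tfrac{I^2}{2}$ and $\int\eta F\d\eta-\tfrac{\min(u_0)^2}{2}$) contain compensating sign slips, as a check on $u_0=-\cos$ shows (both would be negative); the conclusion of the lemma is of course unaffected. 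I see no gap in your argument.
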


\begin{proof}
Let us denote
\[
I:=\int_{\min(u_0)}^{\max(u_0)}F(\eta)\d\eta.
\]
We make the following choice
\begin{enumerate}
\item (Small eigenvalues) 
If $0\leq n\varepsilon\leq I$, then we define $\lambda_n$ as the solution to
\(
\int_{-\lambda_n}^{\max(u_0)}F(\eta)\d\eta
	:=n\varepsilon.
\)
\item (Large eigenvalues) Assume that $n\varepsilon> I$, then we define 
\(
\lambda_n
	:=n\varepsilon.
\)
\item (Phase factors) We then define the approximate phase factors by the formula $\theta_0=0$ and
\begin{equation*}
\theta_{n+1}-\theta_n:=\pi-\frac{x_+(-\lambda_n)+x_-(-\lambda_n)}{2}.
\end{equation*}
\end{enumerate}
We only need to check that $\|u_0^{\varepsilon}\|_{L^2}\to \|u_0\|_{L^2}$ as $\varepsilon\to 0$. For this we use the Parseval formula~\eqref{eq:Parseval}
\begin{align*}
\frac{1}{2}\|u_0^{\varepsilon}\|_{L^2}^2
	&=\sum_{n\geq 1}\varepsilon n\gamma_n(u;\varepsilon)\\
	&=\sum_{n=1}^{\lfloor I/\varepsilon\rfloor}\varepsilon n(\lambda_{n}(u;\varepsilon)-\lambda_{n-1}(u;\varepsilon)-\varepsilon)\\
	&=\sum_{n=1}^{\lfloor I/\varepsilon\rfloor}\varepsilon\lambda_n(u;\varepsilon) -\varepsilon^2\frac{\lfloor I/\varepsilon\rfloor(\lfloor I/\varepsilon\rfloor+1)}{2}.
\end{align*}
Since we have seen that $|(\lambda_{n+1}-\lambda_n)F(-\lambda_n)-\varepsilon|\leq C(\delta)\varepsilon\sqrt{\varepsilon}$, the mesh is tending to zero and the $\eta=-\lambda_n$ are distributed by $F$. This leads to 
\[
\frac{1}{2}\lim_{\varepsilon\to 0}\|u_0^{\varepsilon}\|_{L^2}^2
	=\int_{\min(u_0)}^{\max(u_0)}\eta F(\eta)\d\eta-\frac{I^2}{2}.
\]
However, there also holds
\begin{align*}
\frac{1}{2}\|u_0\|_{L^2}^2
	&=\frac{1}{2\pi}\int_{0}^{+\infty}\eta\operatorname{Leb}(x\mid |u(x)|>\eta)\d \eta\\
	&=\int_{0}^{\max(u_0)}\eta F(\eta)\d \eta+\int_{\min(u_0)}^{0}\eta (1-F(\eta))\d\eta\\
	&=\int_{\min(u_0)}^{\max(u_0)}\eta F(\eta)\d \eta-\frac{(\min(u_0))^2}{2}.
\end{align*}
Now, we write
\begin{align*}
I=\int_0^{\max(u_0)}F(\eta)\d\eta-\int_{\min(u_0)}^{0}(1-F(\eta))\d\eta-\min(u_0).
\end{align*}
Since $u_0$ has mean zero, the first two terms cancel out, leading to $I=-\min(u_0)$. This implies the convergence property.
\end{proof}

\begin{proof}[Proof of Theorem~\ref{thm:main}]
Let $k\in\Z$ and $T>0$. Let  $\delta>0$. We have established in Theorem~\ref{thm:link_burgers} and Proposition~\ref{prop:fourier_u_alt} that
\[
\limsup_{\varepsilon\to 0}\sup_{t\in[0,T]}\left|\widehat{u^{\varepsilon}(t)}(k)
	-\widehat{u_{alt}^B(t)}(k)\right|
		\leq C\delta.
\]
We now pass to the limit $\delta\to 0$ and deduce that uniformly on $[0,T]$, there holds $\widehat{u^{\varepsilon}}(k)\to \widehat{u^B_{alt}}(k)$. Moreover, by conservation of the $L^2$ norm and the assumption $\|u_0^{\varepsilon}\|_{L^2}\to\|u_0\|_{L^2}$, all the solutions $u^{\varepsilon}$ are bounded in $L^2$. This is enough to conclude to the weak convergence of $u^{\varepsilon}$ to $u^{B}_{alt}$ in $L^2(\T)$.
\end{proof}

\begin{rk}(Third order Benjamin-Ono equation)\label{rk:hierarchy}
Let us now consider the third order equation in the Benjamin-Ono hierarchy
\[
\partial_t u=\partial_x\left(-\varepsilon^2\partial_{xx}u-\frac{3}{2}\varepsilon u|D|u-\frac{3}{2}\varepsilon H(u\partial_x u)+u^3\right),
\]
where $H$ is the Hilbert transform, i.e. the Fourier multiplier by $-i\operatorname{sgn}(n)$. The spectral parameters are the same as for~\eqref{eq:bo}, and in the time evolution, by considering the rescaled function
\[
v(t)=\frac{1}{\varepsilon}u\left(\frac{t}{\varepsilon^2},x\right),
\]
one can see that the frequencies from~\cite{bo_hierarchie}
\[
\omega_n^{(3)}(v;1)=n^3+n\sum_{p\geq 1}p\gamma_p(v;1) -3\sum_{p\geq 1}\min(p,n)^2\gamma_p(v;1)+3\sum_{p,q\geq 1}\min(p,q,n)\gamma_p(v;1)\gamma_q(v;1),
\]
should simply be replaced by
\begin{align*}
\omega_n^{(3)}(u;\varepsilon)
	&=\varepsilon^2 \omega_n^{(3)}(u/\varepsilon;1)\\
	&=\varepsilon^2 n^3+\varepsilon n\sum_{p\geq 1}p\gamma_p(u;\varepsilon) -3\varepsilon\sum_{p\geq 1}\min(p,n)^2\gamma_p(u;\varepsilon)+3\sum_{p,q\geq 1} \min(p,q,n)\gamma_p(u;\varepsilon)\gamma_q(u;\varepsilon).
\end{align*}
As a consequence, the formula $\lambda_n(u;\varepsilon)=\varepsilon n-\sum_{p\geq n+1}\gamma_p$ and the Parseval formula~\eqref{eq:Parseval} lead to
\begin{multline*}
\omega_{n+1}^{(3)}(u;\varepsilon)-\omega_n^{(3)}(u;\varepsilon)\\
	=3\varepsilon^2 n^2+3\varepsilon^2 n+\varepsilon^2+\varepsilon \sum_{p\geq 1}p\gamma_p(u;\varepsilon)-3\varepsilon\sum_{p\geq n+1}(2n+1)\gamma_p(u;\varepsilon)+3\sum_{p,q\geq n+1}\gamma_p(u;\varepsilon)\gamma_q(u;\varepsilon),
\end{multline*}
and finally
\begin{equation*}
\omega_{n+1}^{(3)}(u;\varepsilon)-\omega_n^{(3)}(u;\varepsilon)
	=3\lambda_n^2+3\varepsilon\lambda_n+\varepsilon^2+\|u\|_{L^2(\T)}^2/2.
\end{equation*}
As $\varepsilon\to 0$, an adaptation of the proof of Theorem~\ref{thm:link_burgers} would lead to the convergence of $u^{\varepsilon}$ to the solution $\widetilde{u}^B$ to the third equation in the inviscid Burgers hierarchy
\[
\widetilde{u}^B (x,t)=u_0\left(x-\left(3 \widetilde{u}^B(x,t)^2+\|u_0\|_{L^2}^2/2\right)t\right),
\]
which is comparable to the behavior on the line~\cite{MillerXu2011hierarchy}.
\end{rk}

\begin{lem}[Weak convergence after the breaking time]\label{lem:CV_weak}
Let $u_0(x)=-\cos(x)$. For $t$ right after the breaking time for the Benjamin-Ono equation, there holds
\[
\|u_0\|_{L^2(\T)}>\|u^B_{alt}(t)\|_{L^2(\T)}.
\]
As a consequence, the solution $u^{\varepsilon}$ to equation~\eqref{eq:bo} with initial data $u_0$
\[
\|u^{\varepsilon}(t)\|_{L^2(\T)}= \|u_0\|_{L^2(\T)}>\|u^B_{alt}(t)\|_{L^2(\T)},
\]
and the convergence of $u^{\varepsilon}(t)$ to $u^B_{alt}(t)$ cannot be strong in $L^2(\T)$.
\end{lem}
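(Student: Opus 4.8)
The lemma has two parts. The implication that the convergence cannot be strong is immediate from the first part: equation~\eqref{eq:bo} conserves the $L^2(\T)$ norm, so $\|u^{\varepsilon}(t)\|_{L^2(\T)}=\|u_0\|_{L^2(\T)}$ for the $\varepsilon$-independent cosine initial data, and by Theorem~\ref{thm:main} the $u^{\varepsilon}(t)$ converge weakly to $u^B_{alt}(t)$ in $L^2(\T)$; a weakly convergent family is strongly convergent exactly when its norms converge, which here is impossible once $\|u_0\|_{L^2}>\|u^B_{alt}(t)\|_{L^2}$. So everything reduces to the strict norm inequality for $t$ slightly above the breaking time $T$, which I would prove via an exact identity for the contribution of the overlap region of the multivalued Burgers solution.

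Write $\Phi_t(y):=y+2tu_0(y)$ and $J(y,t):=\Phi_t'(y)=1+2tu_0'(y)$, so the multivalued solution at time $t$ is the graph $\{(\Phi_t(y),u_0(y)):y\in\R\}$ read modulo $2\pi$ in $x$. For $u_0=-\cos$ the only inflection point producing a shock for $t>0$ is $\xi_+=3\pi/2$ (at $\xi_-=\pi/2$ one has $u_0'(\xi_-)=1>0$, hence a negative breaking time), and $T=-1/(2u_0'(\xi_+))=1/2$. Since $u_0''(\xi_+)=0$ and $u_0'''(\xi_+)=1\neq0$, the local normal form $x-\xi_+\approx-2(t-T)\,y+\tfrac16 y^3$ near $\xi_+$ shows that for $t$ just above $T$ the set $B_t:=\{J(\cdot,t)<0\}$ is a short interval around $\xi_+$ on which $\Phi_t$ is strictly decreasing, that $\mathcal R_t:=\Phi_t(\overline{B_t})$ is an interval of positive length shrinking to $\{\xi_+\}$ as $t\downarrow T$, and that $\Phi_t^{-1}(x)$ consists of exactly three points $y_-(x)<y_0(x)<y_+(x)$ for $x$ in the interior of $\mathcal R_t$ — with $y_0(x)\in B_t$ and $y_\pm(x)$ in $\{J>0\}$ — and of a single point for $x\in\T\setminus\mathcal R_t$. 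Hence $u^B_{alt}(t,\cdot)=u_0\circ\Phi_t^{-1}$ off $\mathcal R_t$, while on $\mathcal R_t$, since $u_0$ is decreasing near $\xi_+$, the three branches in increasing order are $u_0^B=u_0(y_+)$, $u_1^B=u_0(y_0)$, $u_2^B=u_0(y_-)$, so that $u^B_{alt}(t,\cdot)=u_0(y_+)-u_0(y_0)+u_0(y_-)$ there.

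The crux is that $\int_0^{2\pi}u_0(y)^2J(y,t)\,dy=2\pi\|u_0\|_{L^2(\T)}^2+2t\int_0^{2\pi}u_0^2u_0'\,dy=2\pi\|u_0\|_{L^2(\T)}^2$ is independent of $t$ (the last integral vanishes by periodicity). Splitting this integral and $\int_0^{2\pi}(u^B_{alt}(t,x))^2\,dx$ according to whether the base point lies over $\mathcal R_t$, and using the change of variables $x=\Phi_t(y)$ on each sheet (Jacobian $J$ on the two sheets where $J>0$, Jacobian $-|J|$ on the reversed sheet $B_t$), the parts over $\T\setminus\mathcal R_t$ coincide and cancel, leaving
\[
2\pi\big(\|u_0\|_{L^2(\T)}^2-\|u^B_{alt}(t)\|_{L^2(\T)}^2\big)=\int_{\mathcal R_t}\Big(u_0(y_+)^2+u_0(y_-)^2-u_0(y_0)^2-\big(u_0(y_+)-u_0(y_0)+u_0(y_-)\big)^2\Big)\,dx .
\]
A direct expansion shows the integrand equals $2\,(u_1^B-u_0^B)(u_2^B-u_1^B)$, which is strictly positive in the interior of $\mathcal R_t$ since $u_0^B<u_1^B<u_2^B$ there; as $\mathcal R_t$ has positive measure for every $t>T$, the right-hand side is strictly positive, giving $\|u_0\|_{L^2}>\|u^B_{alt}(t)\|_{L^2}$. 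The delicate point of this plan is the bookkeeping of the second paragraph — isolating $\mathcal R_t$ and its three sheets and tracking the sign of $J$ on each — for which the cubic nondegeneracy at $\xi_+$ (condition (4) of Definition~\ref{def:single well}) is exactly what is needed; for larger times, where more branches appear, the same computation applied to each overlap interval again produces a strictly positive right-hand side, so the argument is not really specific to times right after $T$ nor to the cosine.
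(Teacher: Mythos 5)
Your proposal is correct, but it proves the key inequality by a genuinely different route than the paper. The paper's proof is a distribution-function argument tailored to the cosine: it parametrizes the branches explicitly via $x_{\pm}(\eta,t)=\pi\pm\arccos(\eta)+2\eta t$, locates the fold points $X_{\pm}$, shows by monotonicity of the three branches that on part of the overlap interval the signed sum satisfies $u^B_{alt}>v_0^B$ and $u^B_{alt}\leq 0$, deduces the strict comparison $\frac{1}{2\pi}\operatorname{Leb}(x\mid u^B_{alt}(t,x)>\eta)<F(\eta)$ for $\eta\in[0,\eta_+]$ (and symmetrically for $u^B_{alt}<-\eta$), and concludes with the layer-cake formula. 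You instead derive an exact identity: from the $t$-independence of $\int_0^{2\pi}u_0(y)^2\bigl(1+2t\,u_0'(y)\bigr)\d y$ and a sheet-by-sheet change of variables $x=\Phi_t(y)$ (with the sign of the Jacobian tracked on the reversed middle sheet), the norm defect becomes $2\pi\bigl(\|u_0\|_{L^2}^2-\|u^B_{alt}(t)\|_{L^2}^2\bigr)=\int_{\mathcal R_t}2\,(u_1^B-u_0^B)(u_2^B-u_1^B)\d x>0$, the algebra $p^2+r^2-q^2-(p-q+r)^2=2(q-p)(r-q)$ being correct. Your bookkeeping of $\mathcal R_t$, the three preimages and the Jacobian signs is sound for $t$ just above $T=1/2$ (the cubic normal form at $\xi_+=3\pi/2$ guarantees $\mathcal R_t$ has positive length), which is all the lemma needs; the reduction of the non-strong-convergence claim to the norm gap via conservation of $\hamilton_1$ is the same as the paper's. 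What your approach buys is an explicit, quantitative formula for the $L^2$ defect that works verbatim for any single well initial data, and in principle for all post-breaking times, though your closing claim about the many-branch case silently uses the inequality $\sum_n(-1)^nu_n^2>\bigl(\sum_n(-1)^nu_n\bigr)^2$ for increasing branches, which is true but would need a short proof; what the paper's approach buys is that it stays entirely within the explicit cosine picture already set up for Figure~\ref{fig:multivalued2} and avoids any change-of-variables/orientation bookkeeping.
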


\begin{proof}
We have $x_-(\eta,t)=\pi-\arccos(\eta)+2\eta t$ and $x_+(\eta,t)=\pi+\arccos(\eta)+2\eta t$. We first study $x_+$ as a function of $\eta$. We get
\[
\partial_{\eta}x_+(\eta,t)=-\frac{1}{\sqrt{1-\eta^2}}+2t.
\]
When $2t>1$, there is exactly one solution $\eta_+=\sqrt{1-1/2t}$ in $(0,1]$ and one solution $\eta_-=-\eta_+$ in $[-1,0)$, and $\partial_{\eta}$ is negative on $[\eta_-,\eta_+]$. As a consequence, $x_+$ is increasing on $[-1,\eta_-]$ and $[\eta_+,1]$, and decreasing on $[\eta_-,\eta_+]$. Moreover, one can see that $\partial_{\eta}x_+$ is an even function of $\eta$ and $x_+(0)=\pi/2$, so that $x_+(\eta)-\pi/2$ is an odd function of $\eta$.

Consequently, right after the breaking time, one can describe the branches of one period of $u^B(t,x)$ as follows (the notation is similar to Figure~\ref{fig:multivalued2} except that we have split $v_0^B$ into a left part denoted $v_0^B$ and a right part denoted $v_2^B$ by restricting first the solution to $[0,2\pi]$). Let us denote $X_{+}=x_{+}(\eta_{+})$ and $X_-=x_+(\eta_-)$. There is one upper branch $v_0^B$ from $(0, u^B(t,0))$ to $(X_+,\eta_+)$, one middle branch $v_1^B$ from $(X_-,\eta_-)$ to $(X_+,\eta_+)$ and one lower branch $v_2^B$ from $(X_-,\eta_-)$ to $(2\pi,u^B(t,2\pi))$. Moreover, $v_0^B$ is increasing from $(0,u^B(t,0))$ to $(\pi/2+2t,1)$, and decreasing from $(\pi/2+2t,1)$ to $(X_+,\eta_+)$. Similarly, $v_2^B$ is decreasing from $(X_-,\eta_-)$ to $(2\pi-2t, -1)$ and increasing from $(2\pi-2t, -1)$ to $(2\pi,u^B(t,2\pi))$.

We note that $X_+=\pi+\arccos(\eta_+)+2\eta_+ t\to \frac{3\pi}{2}^+$ as $2t\to 1^+$, $2\pi-1>\frac{3\pi}{2}$ and similarly,   $X_-=\pi+\arccos(\eta_-)+2\eta_- t\to \frac{3\pi}{2}^-$ as $2t\to 1^+$ and $\frac{\pi}{2}+1<\frac{3\pi}{2}$, so that when $2t-1$ is small and positive,
\[
\frac{\pi}{2}+2t<X_-<\frac{3\pi}{2}<X_+<2\pi-2t.
\]
For $x\in(\frac{3\pi}{2},X_+)$, then $v_0^B(x)>v_1^B(x)>0>v_2^B(x)$, and as a consequence,
\[
u^B_{alt}(x)=v_2^B(x)-v_1^B(x)+v_0^B(x)>v_0^B(x).
\]
Moreover, $v_2^B$ and $v_0^B$ are decreasing whereas $v_1^B$ is increasing, so that
\[
u^{B}_{alt}(x)\leq v_2^B\left(\frac{3\pi}{2}\right)-v_1^B\left(\frac{3\pi}{2}\right)+v_0^B\left(\frac{3\pi}{2}\right)=0.
\]
We conclude that when $\eta\in [0,\eta_+]$,
\[
F(\eta)>\frac{1}{2\pi}\operatorname{Leb}(x\in[0,2\pi]\mid u^B_{alt}(x)>\eta).
\]
Similarly, 
\[
F(\eta)>\frac{1}{2\pi}\operatorname{Leb}(x\in[0,2\pi]\mid u^B_{alt}(x)<-\eta).
\]
Finally, we write
\begin{align*}
\frac{1}{2}\|u^B_{alt}\|_{L^2}^2
	&=\frac{1}{2\pi}\int_{0}^{\infty}\lambda \operatorname{Leb}(x\in[0,2\pi]\mid |u^B_{alt}(x)|>\lambda)\d\lambda\\
	&=\frac{1}{2\pi}\int_{0}^{\infty}\eta \operatorname{Leb}(x\in[0,2\pi]\mid u^B_{alt}(x)>\eta)\d\eta
	+\frac{1}{2\pi}\int_{0}^{\infty}\eta \operatorname{Leb}(x\in[0,2\pi]\mid u^B_{alt}(x)<-\eta)\d\eta.
\end{align*}
By splitting the integrals between the zones $\eta> \eta_+$ and $0\leq \eta\leq \eta_+$, we conclude that
\[
\frac{1}{2}\|u^B_{alt}\|_{L^2}^2
	<\int_0^{\infty}\eta F(\eta)\d\eta
	=\frac{1}{2}\|u_0\|_{L^2}^2.\qedhere
\]
\end{proof}

\section{Lax eigenvalues for initial data \texorpdfstring{$u_0(x)=-\beta\cos(x)$}{-beta cos(x)}}\label{part:Lax}

The aim of this part is to establish the asymptotic expansion on the Lax eigenvalues from Theorem~\ref{thm:asymptotics}. Using the fact that any eigenfunction $f_n(u_0;\varepsilon)\in L^2_+(\T)$ for the Lax operator admits an analytic expansion on the complex unit disc, we derive an integral identity in part~\ref{subsection:eigenvalue_equation}. Then we apply the method of stationary phase and the Laplace method to deduce an asymptotic expansion of the Lax eigenvalues in part~\ref{subsection:stationary_Laplace}.
Conversely, we  justify that this method enables us to list all the Lax eigenvalues in the two regions $\Lambda_+(\delta)$ and $\Lambda_-(\delta)$ in part~\ref{subsection:characterization}.

\subsection{Eigenvalue equation}\label{subsection:eigenvalue_equation}


In this part, we establish the eigenvalue equation for $u(x)=-\beta\cos(x)$.

\begin{prop}[Eigenvalue equation]\label{prop:eigenvalue_equation}
Let $u(x)=-\beta\cos(x)$. Let $\lambda(u;\varepsilon)$ be an eigenvalue of $L_u(\varepsilon)$. Then
\begin{equation}\label{eq:eigenvalues}
\int_0^{\pi}\cos\left(\frac{\beta}{\varepsilon}\sin\varphi+\left(1+\frac{\lambda}{\varepsilon}\right)\varphi\right)\d\varphi
	=\sin\left(\pi\frac{\lambda}{\varepsilon}\right)\int_0^{\infty}\exp\left(-\frac{\beta}{\varepsilon}\sinh(x)+\left(1+\frac{\lambda}{\varepsilon}\right)x\right)\d x.
\end{equation}
\end{prop}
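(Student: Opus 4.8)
The plan is to translate the spectral equation $L_u(\varepsilon)f=\lambda f$ into a three-term recurrence for the Fourier coefficients of $f$, identify the recurrence as a Bessel recurrence, read off the condition $J_{-1-\lambda/\varepsilon}(\beta/\varepsilon)=0$, and finally expand this vanishing via a classical integral representation of $J_\mu$.

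First I would expand an eigenfunction as $f=\sum_{m\ge 0}c_m e^{imx}\in L^2_+(\T)$, $f\ne 0$, and compute $L_u(\varepsilon)f$ mode by mode. With $u(x)=-\tfrac{\beta}{2}(e^{ix}+e^{-ix})$ the Szegő projector $\Pi$ kills exactly the frequency $-1$ coming from $c_0e^{ix}\cdot e^{-ix}$, so $L_u(\varepsilon)f=\lambda f$ is equivalent to
\[
\tfrac{\beta}{2}(c_{m-1}+c_{m+1})=(\lambda-\varepsilon m)c_m,\qquad m\ge 1,
\]
together with the mode-zero relation $\tfrac{\beta}{2}c_1=\lambda c_0$, which is precisely the $m=0$ instance of the recurrence once we set $c_{-1}:=0$. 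Note $c_0\ne 0$: otherwise $\tfrac{\beta}{2}c_1=\lambda c_0=0$ forces $c_1=0$, and then $c_m=0$ for all $m$ by induction, contradicting $f\ne 0$.

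Next I would rescale, setting $z:=\beta/\varepsilon$ and $\nu:=\lambda/\varepsilon$, so that the recurrence becomes $\tfrac z2(c_{m-1}+c_{m+1})=(\nu-m)c_m$. This is a second-order linear recurrence with \emph{constant} leading coefficient $\beta/2$, so its Casoratian is constant; consequently it has at most a one-dimensional space of solutions that tend to $0$. Using the Bessel relation $\tfrac z2(J_{\mu-1}(z)+J_{\mu+1}(z))=\mu J_\mu(z)$ one checks in one line that $w_m:=(-1)^m J_{m-\nu}(z)$ solves the recurrence for every $m\in\Z$, and the asymptotics $J_\mu(z)\sim(z/2)^\mu/\Gamma(\mu+1)$ as $\mu\to+\infty$ show $(w_m)_{m\ge 0}\in\ell^2$. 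Hence the $\ell^2$ sequence $(c_m)_{m\ge0}$ is a nonzero scalar multiple of $(w_m)_{m\ge0}$; since $w$ solves the recurrence down to $m=0$ as well, the boundary relation $c_{-1}=0$ forces $w_{-1}=-J_{-1-\nu}(z)=0$, i.e.
\[
J_{-1-\lambda/\varepsilon}(\beta/\varepsilon)=0 .
\]

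Finally I would invoke the Schläfli–Gubler integral representation, valid for $\operatorname{Re}z>0$ and all $\mu\in\mathbb C$,
\[
J_\mu(z)=\frac1\pi\int_0^\pi\cos(\mu\theta-z\sin\theta)\,\d\theta-\frac{\sin(\mu\pi)}{\pi}\int_0^\infty e^{-\mu t-z\sinh t}\,\d t ,
\]
and specialize to $\mu=-1-\lambda/\varepsilon$, $z=\beta/\varepsilon$: then $\cos(\mu\theta-z\sin\theta)=\cos(\tfrac\beta\varepsilon\sin\theta+(1+\tfrac\lambda\varepsilon)\theta)$, $e^{-\mu t-z\sinh t}=\exp(-\tfrac\beta\varepsilon\sinh t+(1+\tfrac\lambda\varepsilon)t)$, and $\sin(\mu\pi)=\sin(\pi\lambda/\varepsilon)$. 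Setting the right-hand side equal to $0$ and multiplying by $\pi$ gives exactly~\eqref{eq:eigenvalues}; the $\sinh$-integral converges because its integrand decays like $\exp(-\tfrac{\beta}{2\varepsilon}e^{t})$. The genuinely delicate point is the middle step — that the $\ell^2$ solution of the recurrence must be the recessive (minimal) Bessel solution $(-1)^m J_{m-\nu}(z)$ — which the constancy of the Casoratian resolves cleanly; the only other point requiring care is selecting the correct integral representation, with both the trigonometric and the $\sinh$ piece present.
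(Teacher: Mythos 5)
Your proof is correct, but it takes a genuinely different route from the paper. The paper never leaves the ``continuous'' side: it extends the eigenfunction holomorphically to the unit disk, computes $\Pi(\overline{\Pi u}f)$ by a residue, reduces the eigenvalue problem to a first-order inhomogeneous ODE in $z$, solves it with the integrating factor $z^{-\lambda}\exp(-\alpha z+\alpha z^{-1})$, and obtains \eqref{eq:eigenvalues} as the single-valuedness/compatibility condition across $\R_+$ (integrating the identity around a circle); the cosine and $\sinh$ integrals are thus re-derived from scratch, which keeps the argument self-contained and is the form the author hopes to push to general trigonometric polynomials with a fixed-order pole at $0$. You instead work on the Fourier side: the three-term recurrence $\tfrac{\beta}{2}(c_{m-1}+c_{m+1})=(\lambda-\varepsilon m)c_m$ with $c_{-1}=0$, the identification of the minimal solution with $(-1)^mJ_{m-\lambda/\varepsilon}(\beta/\varepsilon)$ via constancy of the Casoratian, and the classical Schl\"afli--Gubler representation (DLMF 10.9.6), which converts the resulting condition $J_{-1-\lambda/\varepsilon}(\beta/\varepsilon)=0$ into exactly \eqref{eq:eigenvalues}. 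What your route buys is a cleaner structural statement (the Lax eigenvalues are rescaled zeros of a Bessel function of varying order) and an off-the-shelf source for the integral identity; what it gives up is the self-containedness and the prospect of generalization beyond $\cos$, since higher trigonometric polynomials yield longer recurrences with no Bessel identification. Two small points to tidy: (i) your phrase ``nonzero scalar multiple of $(w_m)$'' implicitly uses that $(w_m)_{m\ge0}\not\equiv 0$, i.e.\ that $J_{-\nu}$ and $J_{1-\nu}$ have no common zero at $z=\beta/\varepsilon\neq 0$ (standard, via uniqueness for Bessel's ODE), though even in the degenerate case the backward recurrence at $m=0$ would still give $J_{-1-\nu}(z)=0$; (ii) the paper's proof actually establishes the equivalence (eigenvalue $\iff$ \eqref{eq:eigenvalues}), and the converse is used later in Section~\ref{subsection:characterization}; your construction delivers it too (the rapid decay of $J_{m-\nu}(\beta/\varepsilon)$ in $m$ makes $\sum_m(-1)^mJ_{m-\nu}(\beta/\varepsilon)e^{imx}$ a genuine $H^1_+$ eigenfunction), but as written you only prove the stated direction.
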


A possible further generalization for general trigonometric polynomials, although more technical, would allow us to use the comonotone approximation theorems of continuous functions by trigonometric polynomials (see \cite{LorentzZeller1968} in the case of single well potentials and \cite{DzyubenkoPleshakov2008} in more general cases), however, we were not able to push further this approach.


\begin{proof}
A scaling argument implies that for the Lax operator $L_u(\varepsilon)$ associated to the Benjamin-Ono equation with dispersion parameter $\varepsilon$, we have $\lambda_n(u;\varepsilon)=\varepsilon\lambda_n(\frac{u}{\varepsilon};1)$. It is therefore enough to only tackle the case $\varepsilon=1$ and replace later $\beta$ by $\beta/\varepsilon$.

Let $f$ be an eigenvector of the Lax operator $L_u=D-T_u$ with eigenvalue $\lambda$. Let $\alpha=-\beta/2$. Since $u(x+\pi)=-u(x)$, the spectrum is unchanged when $\beta$ becomes $-\beta$ and we rather study $u(x)=2\alpha\cos(x)$. We expand $f$ and $\Pi u $ as holomorphic functions on $\D$, and 
\[
\overline{\Pi u}(z)=\alpha z^{-1}
\]
as a holomorphic function on $\C\setminus\{0\}$.
 Then the Szeg\H{o} projector has the expression
\[
\Pi(\overline{\Pi u} f)
	=\frac{1}{2\pi i}\oint_{\partial\D}\frac{f(\zeta)}{\zeta-z}\alpha\zeta^{-1}\d\zeta.
\]
Applying the residue formula, we get for $z\in\D$ that
\begin{align*}
\Pi(\overline{\Pi u} f)(z)
	&=\overline{\Pi u}f(z)+\alpha\Res_{\zeta=0}\left(\frac{f(\zeta)}{\zeta-z}\zeta^{-1}\right)\\
	&=\overline{\Pi u}f(z)-\alpha\frac{f(0)}{z}.
\end{align*}
The equation $Df-\Pi(uf)=\lambda f$ satisfied by $f$ becomes
\begin{equation}\label{eq:f_res}
zf'(z)-\left(\alpha z+{\alpha}z^{-1}+\lambda\right)f(z)=-\alpha f(0) z^{-1}.
\end{equation}
Since $f$ is holomorphic, the right hand side does not go to zero as $z\to0$. As a consequence, we have $f(0)\neq 0$, and we can therefore assume that $f(0)=1$.

Let us choose the branch of the logarithm corresponding to $\arg(z)\in(0,2\pi)$ and define
\[
h(z):=z^{-\lambda}\exp\left(-\alpha z+\alpha z^{-1}\right).
\]
Then $h$ is solution on $\C\setminus\R_+$ to
\[
zh'(z)=-\left(\alpha z+{\alpha}z^{-1}+\lambda\right)h(z).
\]
We deduce that equation~\eqref{eq:f_res} is equivalent to
\begin{equation}\label{eq:f_identity}
(fh)'(z)=-\alpha z^{-\lambda-2}\exp\left(-\alpha z+\alpha z^{-1}\right)=-\alpha z^{-2}h(z).
\end{equation}
By assumption, we have $\alpha>0$. For $z\in\C\setminus\R_+$, we choose a path $\gamma_z$ joining $0$ to $z$ in $\C\setminus\R_+$ and such that $\gamma_z(t)=-t$ if $t\in[0,t_0]$ for some $t_0>0$. Since $f$ is holomorphic on $\D$, we get
\begin{equation}\label{eq:f_line}
f(z)
	=-\alpha z^{\lambda}\exp\left(\alpha z-\alpha z^{-1}\right)
		\int_{\gamma_z}\zeta^{-\lambda-2}\exp\left(-\alpha\zeta+\alpha\zeta^{-1}\right)\d\zeta,
\end{equation}
and the integral is absolutely convergent.

We first prove that this expression defines a holomorphic function on $\C\setminus\R_+$ satisfying the eigenfunction equation, and converging to $1$ as $z\to 0$. Indeed, a Taylor expansion of $f$ around $0$ of the form $S_n(z)=1+a_1z+\dots +a_nz^n$ with $a_0=1$ and $n\geq 1$ transforms equation~\eqref{eq:f_identity} into
\begin{multline*}
((f-S_n)h)'(z)
	=-\alpha z^{-2}h(z)
	-(a_1+\dots+na_{n}z^{n-1})h(z)\\
	+(1+a_1z+\dots+a_nz^n)(\alpha +\alpha z^{-2}+\lambda z^{-1})h(z).
\end{multline*}
We now define the coefficients $a_k$ by induction using this formula in order to cancel all the negative powers of $z$. We note that the coefficient before $z^{-2}h(z)$ is $0$, and the coefficient before $z^{-1}h(z)$ is $0$ if $\alpha a_1+\lambda=0$. Next, the coefficient in front of $z^{j}h(z)$, $j\geq 0$, is
\[
a_{j+1}-\alpha a_j+\alpha a_{j+2}+\lambda a_{j+1},
\]
one can therefore choose $a_{j+2}$ in order to cancel this term. As a consequence, at rank $n$, one can cancel the terms up to $z^{n-2}h(z)$. We end up with
\begin{equation*}
((f-S_n)h)'(z)
	=R_n(z)z^{n-1}h(z),
\end{equation*}
where $R_n=(\lambda-n)a_n+\alpha a_{n-1}+\alpha a_n z$ is a holomorphic remainder term. We conclude that
\begin{equation*}
f(z)-S_n(z)	=h(z)^{-1}\int_{\gamma_z}R_n(\zeta)\zeta^{n-1}h(\zeta)\d\zeta,
\end{equation*}
with $h(\zeta)=\zeta^{-\lambda}\exp\left(-\alpha\zeta+\alpha\zeta^{-1}\right)$. Choosing $n$ large with respect to $\lambda$, we see that this defines a holomorphic function on $\C\setminus\R_+$, satisfying the differential ODE~\eqref{eq:f_identity} and converging to $1$ as $z\to 0$. Moreover, for every $r>0$, the limits $f(r+iy)$ and $f(r-iy)$ exist as $y\to 0^+$, therefore $f$ is an eigenfunction associated to $\lambda$ if and only if for every $r>0$, we have 
\begin{equation}\label{eq:f_compatibility}
\lim_{y\to 0^+}f(r+iy)=\lim_{y\to 0^+}f(r-iy). 
\end{equation}
We now check this property.

We first assume that $\lambda$ is not an integer. For $r>0$, we integrate \eqref{eq:f_identity} over the circle centered at $0$ of radius $r$, starting at $r+i0$ and ending at $r-i0$ in the counterclockwise direction. This leads to a second condition
\begin{equation*}
(e^{-2i\pi\lambda}-1)f(r)\exp\left(-\alpha r+\alpha r^{-1}\right)\\
	=-\alpha \int_0^{2\pi}
	\exp\left(-\alpha r e^{i\theta}+
	\alpha r^{-1}e^{-i\theta}\right)
	r^{-1}e^{-i(\lambda+1)\theta}i\d\theta.
\end{equation*}
By analytic continuation, we obtain that for every $z\in\C\setminus\{0\}$, we have
\begin{equation}\label{eq:f_circle}
(e^{-2i\pi\lambda}-1)f(z)\exp\left(-\alpha z+\alpha z^{-1}\right)\\
	=-i\alpha z^{-1}\int_0^{2\pi}
	\exp\left(-\alpha z e^{i\theta}+
	\alpha z^{-1}e^{-i\theta}\right)
	e^{-i(\lambda+1)\theta}\d\theta.
\end{equation}
This expression defines a holomorphic function $f$ outside the origin, solving the ODE~\eqref{eq:f_res} of order one. Therefore, $f$ defines an eigenfunction with eigenvalue $\lambda$ if and only if this expression coincides with~\eqref{eq:f_line} at one non singular point, for instance at the point $z=-1$:
\begin{equation*}
(e^{-2i\pi\lambda}-1)\int_{0}^1 t^{-\lambda-2}\exp\left(\alpha t-\alpha t^{-1}\right) \d t\\
	=i\alpha \int_0^{2\pi}
	\exp\left(\alpha  e^{i\theta}-
	\alpha e^{-i\theta}\right)
	e^{-i(\lambda+1)\theta}\d\theta,
\end{equation*}
or
\begin{equation*}
e^{-i\pi\lambda}2\sin(-\pi\lambda)\int_{0}^1 t^{-\lambda-2}\exp\left(\alpha t-\alpha t^{-1}\right) \d t\\
	=\alpha \int_0^{2\pi}
	\exp\left(i(2	\alpha \sin(\theta)-(\lambda+1)\theta)\right)\d\theta.
\end{equation*}
We set the change of variable $t=e^{-x}$ and $\theta=\varphi+\pi$, and get that this is equivalent to
\begin{equation*}
\sin(\pi\lambda)\int_0^{\infty}\exp\left({(\lambda+1)x-2\alpha\sinh(x)}\right)\d x\\
	=\int_0^{\pi}\cos\left((\lambda+1)\varphi+2\alpha \sin(\varphi)\right)\d\varphi.
\end{equation*}

When $\lambda$ is an integer, the condition~\eqref{eq:f_compatibility} is satisfied if and only if the function $f$ given by~\eqref{eq:f_line} is holomorphic:
\begin{equation*}
0
	=\int_{C(0,r)}\zeta^{-\lambda-2}\exp\left(-\alpha\zeta+\alpha \zeta^{-1}\right)\d\zeta,
\end{equation*}
where $C(0,r)$ is the circle of radius $r$ centered at $0$. Since the integrand is holomorphic outside the origin, this integral does not depend on $r$ so it is enough to calculate it for $r=1$:
\[
0
	=\int_{0}^{2\pi}e^{i\theta(-\lambda-1)}\exp\left(-\alpha e^{i\theta}+\alpha e^{-i\theta}\right)\d\theta,
\]
and this also leads to the result.
\end{proof}

%

\subsection{Asymptotic expansion for the Lax eigenvalues}\label{subsection:stationary_Laplace}

In this part, we apply the stationary phase and Laplace methods into identity~\eqref{eq:eigenvalues} and get an asymptotic expansion of the Lax eigenvalues $\lambda$.

In order to get a uniform bound on the remainder terms, we need do ensure that the stationary point of the phase remains sufficiently far from the integral boundaries. Therefore, we fix a small parameter $\delta>0$, and we only consider the eigenvalues $\lambda$ such that $\nu=\lambda+\varepsilon$ is  inside one of the two intervals $\Lambda_-(\delta)=[-\beta+\delta,\beta-\delta]$ and $\Lambda_+(\delta)=[\beta+\delta,+\infty)$.

We apply the method of stationary phase for the first term
\[
I_1(\varepsilon,\nu):=\int_0^{\pi}\exp\left(i\left(\frac{\beta}{\varepsilon}\sin\varphi+\frac{\nu}{\varepsilon}\varphi\right)\right)\d\varphi
\]
and the Laplace method for the second term
\[
I_2(\varepsilon,\nu):=\int_0^{\infty}\exp\left(-\frac{\beta}{\varepsilon}\sinh(x)+\frac{\nu}{\varepsilon}x\right)\d x
\]
that appear in the identity~\eqref{eq:eigenvalues}
\(
\Re(I_1(\varepsilon,\nu))=\sin\left(\pi\frac{\lambda}{\varepsilon}\right)I_2(\varepsilon,\nu),
\)
which we write
\begin{equation}\label{eq:identity}
I(\varepsilon,\nu)=0
\end{equation}
with $\nu=\lambda+\varepsilon$ and
\begin{equation*}
I(\varepsilon,\nu)
	=\Re(I_1(\varepsilon,\nu))+\sin\left(\pi\frac{\nu}{\varepsilon}\right)I_2(\varepsilon,\nu).
\end{equation*}

To estimate the large eigenvalues, we choose $K(\delta)>0$  such that $\frac{1}{K(\delta)}\|u\|_{L^2}^2< 2\delta$, in order to get thanks to the Parseval formula~\eqref{eq:Parseval}
\[
\sum_{k\geq K(\delta)/\varepsilon}\gamma_k(u;\varepsilon)
	\leq \frac{\varepsilon}{K(\delta)}\sum_{k\geq K(\delta)/\varepsilon} k\gamma_k(u;\varepsilon)
	\leq \frac{1}{2K(\delta)}\|u\|_{L^2}^2
	< \delta.
\]
As a consequence, since 
\(
\lambda_n(u;\varepsilon)=n\varepsilon-\sum_{k\geq n+1}\gamma_k(u;\varepsilon),
\)
 one can see that for $n\geq K(\delta)/\varepsilon$, there holds $\lambda_n+\varepsilon\geq K(\delta)-\delta$.

\paragraph{Method of stationary phase for the first term} Let us start with $I_1(\varepsilon,\nu)=\int_0^{\pi}e^{iS_1(x,\nu)/\varepsilon}\d x$, where the phase is equal to
\[
S_1(x,\nu)=\beta\sin(x)+\nu x=F_1(x)+\nu x,
\]
$F_1(x)=\beta\sin(x)$.
Since $\partial_x S_1(x,\nu)=\beta\cos(x)+\nu$, we have the following alternative.
\begin{enumerate}
\item (Small eigenvalues) If  $|\nu|\leq \beta-\delta$, there is a unique critical point
\[
x_1(\nu)=\arccos(-\nu/\beta).
\]
Moreover, there exists $\delta_1(\delta)>0$ such that
\[
x_1(\nu)\in [\arccos(1-\delta/\beta),\arccos(-1+\delta/\beta)]\subset[2\delta_1(\delta),\pi-2\delta_1(\delta)].
\]
Therefore, there exists $\delta_2(\delta)>0$ such that $F''_1(x)=-\sin(x)\leq -\delta_2(\delta)$ for $x\in [\delta_1(\delta),\pi-\delta_1(\delta)]$. 
We also compute 
\[S_1(x_1(\nu),\nu)=\sqrt{\beta^2-\nu^2}+\nu x_1(\nu)=\sqrt{\beta^2-\nu^2}+\nu \arccos(-\nu/\beta),
\]
\[\partial_{xx}S_1(x_1(\nu),\nu)=-F_1''(x_1(\nu))=-\sqrt{\beta^2-\nu^2}.
\] 
 
 From the stationary phase method (see for instance~\cite{Hormander2015}, chapter 7, or adapt directly the proof of~\cite{Faraut2021}), there exists $C(\delta)>0$ such that for every  $|\nu|\leq \beta-\delta$, there holds
\begin{equation*}
\left|I_1(\varepsilon,\nu)-\frac{\sqrt{2\pi \varepsilon }}{\sqrt{\beta^2-\nu^2}}\exp\left(i\left(\frac{S_1(x_1(\nu),\nu)}{\varepsilon}-\frac{\pi}{4}\right)\right)\right|
	\leq C(\delta)\varepsilon,
\end{equation*}
implying
\begin{equation}\label{eq:I1-}
\left| \Re \left(I_1(\varepsilon,\nu)\right)-\frac{\sqrt{2\pi \varepsilon }}{\sqrt{\beta^2-\nu^2}}\cos\left(\frac{S_1(x_1(\nu),\nu)}{\varepsilon}-\frac{\pi}{4}\right)\right|
	\leq C(\delta)\varepsilon.
\end{equation}

\item (Large eigenvalues) If $K(\delta)\geq \nu\geq\beta+\delta$, the phase has no critical point since  $\partial_xS_1(0,\nu)=\beta\cos(x)+\nu\geq\delta$ for every $x$. We know that $S_1(0,\nu)=0$, therefore, there exists $C(\delta)>0$ such that for every  $\nu/\beta\geq 1+\delta/\beta$,
\begin{equation}\label{eq:I1+}
|I_1(\varepsilon,\nu)|
	\leq C(\delta)\varepsilon.
\end{equation}
\end{enumerate}

\paragraph{Method of Laplace for the second term}

Let us now analyze $I_2(\varepsilon)=\int_0^{+\infty}e^{S_2(x,\nu)/\varepsilon}\d x$, where the phase is equal to
\[
S_2(x,\nu)=-\beta\sinh(x)+\nu x=F_2(x)+\nu x,
\]
$F_2(x)=-\beta\sinh(x)$.
Since $\partial_xS_2(x,\nu)=-\beta\cosh(x)+\nu$, the following holds.
\begin{enumerate}
\item (Small eigenvalues) If  $|\nu|\leq \beta-\delta$, we have $\partial_xS_2(x,\nu)\leq-\beta+\nu\leq-\delta$ for every $x$, therefore there is no critical point. We get from the Laplace method that
\begin{equation}\label{I2-}
|I_2(\varepsilon,\nu)|\leq C(\delta)\varepsilon.
\end{equation}

\item (Large eigenvalues) If $K(\delta)\geq \nu\geq \beta+\delta$, then there is a unique critical point
\[
x_2(\nu)=\cosh^{-1}\left(\frac{\nu}{\beta}\right).
\]
There exists $\delta_1(\delta)>0$ such that $x_2(\nu)\geq 2\delta_1(\delta)$ for every $\nu\geq \beta+\delta$, and there exists $\delta_2(\delta)>0$ such that $F''_2(x)=-\beta\sinh(x)\leq-\delta_2(\delta)$ for every $x\geq \delta_1(\delta)$. Moreover, we have
\[
S_2(x_2(\nu),\nu)=-\sqrt{\nu^2-\beta^2}+\nu x_{2}(\nu),\]
\[F''_2(x_{2}(\nu))=-\sqrt{\nu^2-\beta^2}.
\]
One can therefore apply the adapted Laplace method and get
\begin{equation}\label{eq:I2}
\left|I_2(\varepsilon,\nu)
	-\frac{\sqrt{2\pi\varepsilon}}{\sqrt{\nu^2-\beta^2}}e^{S_2(x_2(\nu),\nu)/\varepsilon}\right|
	\leq C(\delta)\varepsilon.
\end{equation}
\end{enumerate}

\paragraph{Small eigenvalues}
For every $|\nu|\leq \beta-\delta$, we conclude that
\begin{equation}\label{eq:approxI-}
\left|I(\varepsilon,\nu)-\frac{\sqrt{2\pi\varepsilon}}{\sqrt{\beta^2-\nu^2}}\cos\left(\frac{S_1(x_1(\nu),\nu)}{\varepsilon}-\frac{\pi}{4}\right)\right|
	\leq C(\delta)\varepsilon.
\end{equation}
Therefore, identity~\eqref{eq:identity}  implies that given a small Lax eigenvalue $\lambda$, then $\nu=\lambda+\varepsilon$ satisfies
\[
\left|\frac{\sqrt{2\pi\varepsilon}}{\sqrt{\beta^2-\nu^2}}\cos\left(\frac{S_1(x_1(\nu),\nu)}{\varepsilon}-\frac{\pi}{4}\right)\right|
	\leq C(\delta)\varepsilon.
\]
Taking the limit $\varepsilon\to 0$, we conclude that $\cos\left(\frac{S_1(x_1(\nu),\nu)}{\varepsilon}-\frac{\pi}{4}\right)$ should be close to~$0$ as soon as  $\varepsilon<\varepsilon_0(\delta)$ is small enough. Therefore there exists an integer $N(\varepsilon,\nu)$ such that
\[
\left|\frac{S_1(x_1(\nu),\nu)}{\varepsilon}-\frac{3\pi}{4}
	-\pi N(\varepsilon,\nu)\right|
	\leq C(\delta)\sqrt{\varepsilon}.
\]
Let us recall the definition of
\[
F(\eta)=\frac{1}{2\pi}\operatorname{Leb}\{x\in [0,2\pi]\mid u(x)\geq \eta\}.
\]
Then one can see that whenever $-\beta<\eta<\beta$, we have
\(
2\pi F(\eta)=2\arccos\left(\frac{\eta}{\beta}\right).
\)
As a consequence,
\begin{multline*}
2\pi\int_{-\nu}^{\beta}F(\eta)\d\eta
	=2\beta\left[x\arccos(x)-\sqrt{1-x^2}\right]_{-\nu/\beta}^1\\
	=2\left(\nu\arccos\left(-\frac{\nu}{\beta}\right)+\sqrt{\beta^2-\nu^2}\right)
	=2S_1(x_1(\nu),\nu).
\end{multline*}
We have therefore proven that
\begin{equation}\label{ineq:defN-}
\left|\int_{-\nu}^{\beta}F(\eta)\d\eta
-\frac{3\varepsilon}{4}
	-\varepsilon N(\varepsilon,\nu)\right|
	\leq C(\delta)\varepsilon\sqrt{\varepsilon}.
\end{equation}
The integral term is bounded below since
\(\int_{\beta-\delta}^{\beta}F(\eta)\d\eta\geq\frac{1}{C(\delta)}.
\) 
We deduce that when $\varepsilon<\varepsilon_0(\delta)$ is small enough, we have that necessarily $N(\varepsilon,\nu)\geq 0$ (and even $N(\varepsilon,\nu)\geq \frac{1}{C(\delta)\varepsilon}$).

\paragraph{Large eigenvalues}
In the case $K(\delta)\geq \nu\geq\beta+\delta$, we have proven that
\begin{equation}\label{eq:approxI+}
\left|I(\varepsilon,\nu)-\sin\left(\pi\frac{\nu}{\varepsilon}\right)\frac{\sqrt{2\pi\varepsilon}}{\sqrt{\nu^2-\beta^2}}e^{S_2(x_2(\nu),\nu)/\varepsilon}\right|
	\leq C(\delta)\varepsilon.
\end{equation}
Then identity~\eqref{eq:identity} implies that given a large Lax eigenvalue $\lambda$, then $\nu=\lambda+\varepsilon$ satisfies
\[
\left|\sin\left(\pi\frac{\nu}{\varepsilon}\right)\frac{\sqrt{2\pi\varepsilon}}{\sqrt{\nu^2-\beta^2}}e^{S_2(x_2(\nu),\nu)/\varepsilon}\right|
	\leq C(\delta)\varepsilon.
\]
We introduce the function $x\mapsto \psi(x):=-\sqrt{x^2-1}+x\cosh^{-1}(x)$ on $[1,\infty)$. This function satisfies $\psi(1)=0$ and its derivative on $(1,\infty)$ is
\[
\psi'(x)=-\frac{x}{\sqrt{x^2-1}}+\cosh^{-1}(x)+\frac{x}{\sqrt{x^2-1}}\geq 0.
\]
This implies that $\psi(x)\geq 0$ on $[1,\infty)$. Since $\nu> \beta$, then $S_2(x_2(\nu),\nu)=\beta \psi(\nu/\beta)\geq 0$. We deduce that
\[
\left|\sin\left(\pi\frac{\nu}{\varepsilon}\right)\right|
	\leq \left| \sin\left(\pi\frac{\nu}{\varepsilon}\right)e^{S_2(x_2(\nu),\nu)/\varepsilon}\right|
	\leq C(\delta)\sqrt{\varepsilon}.
\]
As a consequence, $\sin(\pi\nu/\varepsilon)$ is close to $0$ for small $\varepsilon$, and we have the more precise asymptotics
\begin{equation}\label{eq:defN+}
\left|\nu-(1+N(\varepsilon,\nu))\varepsilon\right|
	\leq C(\delta)\varepsilon\sqrt{\varepsilon}.
\end{equation}
Since $\nu\geq \beta+\delta$, we know that necessarily, $N(\varepsilon,\nu)\geq 0$ (and even $N(\varepsilon,\nu)\geq\frac{\beta}{\varepsilon}$ for $\varepsilon<\varepsilon_0(\delta)$).

\subsection{Characterization of the eigenvalues}\label{subsection:characterization}
Conversely, we prove in this part that the  asymptotic expansions obtained in part~\ref{subsection:stationary_Laplace} actually correspond to eigenvalues for the Lax operator, and therefore we conclude the proof of Theorem~\ref{thm:asymptotics}.

More precisely, in each of the two regimes $\nu\in\Lambda_-(\delta)=[-\beta+\delta,\beta-\delta]$ and $\nu\in\Lambda_+(\delta)=[\beta+\delta,\infty)$, we establish that there is exactly one eigenvalue $\lambda$ such that $\nu=\lambda+\varepsilon$ satisfies $N(\varepsilon,\nu)=N$, as soon as $N$ is compatible with the conditions $|\nu|\leq \beta-\delta$ or $\nu\geq\beta+\delta$.

In this purpose, we fix $\varepsilon$ small enough and study the variations of the two functions of $\nu$ defined as
 \(
I_1(\varepsilon,\nu)=\int_0^{\pi}\exp\left(i\left(\frac{\beta}{\varepsilon}\sin( x)+\frac{\nu}{\varepsilon} x\right)\right)\d x
\)
and
\(
I_2(\varepsilon,\nu)=\int_0^{\infty}\exp\left(-\frac{\beta}{\varepsilon}\sinh(x)+\frac{\nu}{\varepsilon}x\right)\d x.
\)
We have
 \[
\partial_{\nu}I_1(\varepsilon,\nu)
	=\frac{i}{\varepsilon}\int_0^{\pi}x\exp\left(i\left(\frac{\beta}{\varepsilon}\sin(x)+\frac{\nu}{\varepsilon}x\right)\right)\d x
\]
and
\[
\partial_{\nu}I_2(\varepsilon,\nu)
	=\frac{1}{\varepsilon}\int_0^{\infty}x\exp\left(-\frac{\beta}{\varepsilon}\sinh(x)+\frac{\nu}{\varepsilon}x\right)\d x.
\]
We also recall that
\[
I(\varepsilon,\nu)=\Re(I_1(\varepsilon,\nu))+\sin\left(\pi\frac{\nu}{\varepsilon}\right)I_2(\varepsilon,\nu).
\]

\paragraph{Small eigenvalues}
First, we assume that $|\nu|\leq \beta-\delta$. Then the stationary phase method implies that
\[
\left|\partial_{\nu}I_1(\varepsilon,\nu)-i\frac{x_1(\nu)}{\varepsilon}\frac{\sqrt{2\pi \varepsilon }}{\sqrt{\beta^2-\nu^2}}\exp\left(i\left(\frac{S_1(x_1(\nu),\nu)}{\varepsilon}-\frac{\pi}{4}\right)\right)\right|
	\leq C(\delta),
\]
so that
\[
\left|\partial_{\nu}\Re(I_1(\varepsilon,\nu))+\frac{x_1(\nu)}{\sqrt{\varepsilon}}\frac{\sqrt{2\pi  }}{\sqrt{\beta^2-\nu^2}}\sin\left(\frac{S_1(x_1(\nu),\nu)}{\varepsilon}-\frac{\pi}{4}\right)\right|
	\leq C(\delta),
\]
whereas
\[
\left|\partial_{\nu} I_2(\varepsilon,\nu)\right|
	\leq C(\delta).
\]
We conclude that
\[
\left|\partial_{\nu}I(\varepsilon,\nu)
	+\frac{x_1(\nu)}{\sqrt{\varepsilon}}\frac{\sqrt{2\pi}}{\sqrt{\beta^2-\nu^2}}\sin\left(\frac{S_1(x_1(\nu),\nu)}{\varepsilon}-\frac{\pi}{4}\right)\right|
	\leq C(\delta).
\]

Let $c_1>0$ be a small parameter such that if  
\(
\left|\cos\left(x\right)\right|\leq c_1,
\)
then 
\(
\d \left(x, \pi\Z+\frac{\pi}{2}\right)\leq \frac{\pi}{4}.
\)
Using~\eqref{eq:approxI-}, there exist $c_2(\delta)>0$ and $\varepsilon_0(\delta)>0$  such that for $\varepsilon<\varepsilon_0(\delta)$, then the inequality
\begin{equation}\label{ineq:condI-}
|I(\varepsilon,\nu)|\leq c_2(\delta)\sqrt{\varepsilon}
\end{equation}
implies
\[
\left|\cos\left(\frac{S_1(x_1(\nu),\nu)}{\varepsilon}-\frac{\pi}{4}\right)\right|\leq c_1.
\]

Let $N\geq 0$ be an integer. Then there exists $\nu_N^0=\nu_N^0(\varepsilon)\geq -\beta$ such that
\begin{equation}\label{cond:nu0}
\int_{-\nu_N^0}^{\beta}F(\eta)\d\eta
-\frac{3\varepsilon}{4}	-\varepsilon N
	=0,
\end{equation}
implying
\[
\cos\left(\frac{S_1(x_1(\nu_N^0),\nu_N^0)}{\varepsilon}-\frac{\pi}{4}\right)=0.\]
As a consequence, inequality~\eqref{eq:approxI-} implies that for $\varepsilon<\varepsilon_0(\delta)$, there holds
\[
|I(\varepsilon,\nu_N^0)|
	\leq C(\delta)\varepsilon
	\leq c_2(\delta)\sqrt{\varepsilon}/2.
\]
Assume that $|\nu_N^0|\leq \beta-2\delta$.  Let $[\nu_*,\nu^*]\subset[-\beta+\delta,\beta-\delta]$ be the largest interval containing $\nu_N^0$ and on which inequality~\eqref{ineq:condI-} holds.
We prove that the interval $[\nu_*,\nu^*]$  encloses exactly one eigenvalue because of monotonicity of $I$ along the parameter $\nu$, and conversely that this interval is large enough to enclose all the eigenvalues associated to $N$.

On $[\nu_*,\nu^*]$, we have by construction
\[
\left|\sin\left(\frac{S_1(x_1(\nu),\nu)}{\varepsilon}-\frac{\pi}{4}\right)\right|\geq\sqrt{1- c_1^2}.\]
Given that $x_1(\nu)=\arccos(-\frac{\nu}{\beta})\geq \frac{1}{C(\delta)}$, we deduce that for $\varepsilon<\varepsilon_0(\delta)$,
\begin{equation}\label{eq:I-deriv}
|\partial_{\nu}I(\varepsilon,\nu)|
	\geq \frac{\sqrt{1-c_1^2}}{C(\delta)\sqrt{\varepsilon}}- C(\delta)
	\geq \frac{1}{C'(\delta)\sqrt{\varepsilon}}.
\end{equation}
By continuity, the $\nu$-derivative of $I(\varepsilon,\nu)$ stays of the same sign on $[\nu_*,\nu^*]$. For instance if $I(\varepsilon,\cdot)$ is increasing and $I(\varepsilon,\nu_N^0)<0$, then there holds for $\nu^*\geq\nu\geq \nu_N^0$ that
\[
\frac{1}{C'(\delta)\sqrt{\varepsilon}}(\nu-\nu_N^0)\leq I(\varepsilon,\nu) - I(\varepsilon,\nu_N^0).
\] 
Since $|I(\varepsilon,\nu_N^0)|\leq c_2(\delta)\sqrt{\varepsilon}/2$, then as long as $I(\varepsilon,\nu)<0$, we know that $I(\varepsilon,\cdot)$ is increasing and inequality~\eqref{ineq:condI-} stays satisfied. Therefore, there exists $\nu_N\in [\nu_*,\nu^*]$ such that $I(\varepsilon,\nu_N)=0$ and $\nu_N$ is a Lax eigenvalue. We also know that $|\nu_N-\nu_N^0|\leq\delta$ for $\varepsilon<\varepsilon_0(\delta)$. An adaptation of this argument applies to the other cases, when $I(\varepsilon,\cdot)$ is decreasing or when $I(\varepsilon,\nu_N^0)\geq 0$.

The integer $N$ is thus uniquely defined in the following inequality, which stays true on the (non ordered) interval $[\nu_N^0,\nu_N]$ by construction
\begin{equation*}\label{eq:defN}
\frac{\pi}{\varepsilon}\left|\int_{-\nu}^{\beta}F(\eta)\d\eta
-\frac{3\varepsilon}{4}	-\varepsilon N\right|
	=\left|\frac{S_1(x_1(\nu),\nu)}{\varepsilon}-\frac{\pi}{4}-\frac{\pi}{2}-\pi N\right|
	\leq \frac{\pi}{4}.
\end{equation*}
Moreover, this is the same integer $N$ on the whole interval $[\nu_N^0,\nu_N]$ by continuity. Given $N$, then $\nu_N$ is uniquely defined in $[\nu_*,\nu^*]$ because $I(\varepsilon,\cdot)$ is strictly monotone in this interval.  

Conversely, the function $\nu\mapsto S_1(x_1(\nu),\nu)=\sqrt{\beta^2-\nu^2}+\nu \arccos(-\nu/\beta)$ is $C_1(\delta)$-Lipschitz on the interval $[-\beta+\delta,\beta-\delta]$. As a consequence, let $C_0(\delta)$ be a large constant to be chosen later and let $\nu$ such that
\[
|\nu-\nu_N^0|\leq \frac{c_2(\delta)\varepsilon}{C_0(\delta)C_1(\delta)}.
\]
For $\varepsilon<\varepsilon_0(\delta)$, the upper bound is less than $\delta$ so that $\nu\in[-\beta+\delta,\beta-\delta]$. 
Moreover, the Lipschitz bound implies
\[
\left|\cos\left(\frac{S_1(x_1(\nu),\nu)}{\varepsilon}-\frac{\pi}{4}\right)\right|
	=\left|\cos\left(\frac{S_1(x_1(\nu),\nu)}{\varepsilon}-\frac{\pi}{4}\right)
	-\cos\left(\frac{S_1(x_1(\nu_N^0),\nu_N^0)}{\varepsilon}-\frac{\pi}{4}\right) \right|
	\leq \frac{c_2(\delta)}{C_0(\delta)}.
\]
Consequently, inequality~\eqref{eq:approxI-} implies that for $\varepsilon<\varepsilon_0(\delta)$ chosen small enough and $C_0(\delta)$ chosen large enough, there holds
\[
|I(\varepsilon,\nu)|\leq  \frac{c_2(\delta)\sqrt{\varepsilon}}{2}+C(\delta)\varepsilon\leq c_2(\delta)\sqrt{\varepsilon}.
\]
Therefore, inequality~\eqref{ineq:condI-} holds true, and we have proven that when  $\frac{c_2(\delta)\varepsilon}{C_0(\delta)C_1(\delta)}<\delta$, then
\[
\left[\nu_N^0-\frac{c_2(\delta)\varepsilon}{C_0(\delta)C_1(\delta)},\nu_N^0+\frac{c_2(\delta)\varepsilon}{C_0(\delta)C_1(\delta)}\right]\subset[\nu_*,\nu^*].
\]
We deduce that the Lax eigenvalue $\nu_N$ associated to the integer $N$, which satisfies $|\nu-\nu_N^0|\leq C(\delta)\varepsilon\sqrt{\varepsilon}$ thanks to inequality~\eqref{ineq:defN-}, must belong to $[\nu_*,\nu^*]$ and is therefore uniquely defined on $[-\beta+\delta,\beta-\delta]$.


To conclude, if $|\nu|\leq \beta-\delta$ is a small eigenvalue, then there exists $|\nu_N^0|\leq \beta-\delta/2$ as above. Conversely, if $|\nu_N^0|\leq \beta-\delta/2$, then one can construct a small eigenvalue $|\nu|\leq\beta-\delta/4$  such that $N=N(\varepsilon,\nu)$, and by restriction to the interval $[-\beta+\delta,\beta-\delta]$, we get all the eigenvalues satisfying $|\nu|\leq\beta-\delta$.

\paragraph{Large eigenvalues}
We now establish the asymptotics for the eigenvalues satisfying $K(\delta)\geq \nu\geq\beta+\delta$. Using the Laplace method,
\[
\left|\partial_{\nu}I_2(\varepsilon,\nu)
	-\frac{x_2(\nu)}{\varepsilon}\frac{\sqrt{2\pi\varepsilon}}{\sqrt{\nu^2-\beta^2}}e^{S_2(x_2(\nu),\nu)/\varepsilon}\right|
	\leq C(\delta),
\]
whereas
\[
\left|\partial_{\nu} I_1(\varepsilon,\nu)\right|
	\leq C(\delta).
\]
We proceed similarly as the small eigenvalue case.

Let $N\geq 0$ and $\nu_N^0=\nu_N^0(\varepsilon)$ such that 
\[\nu_N^0=(N+1)\varepsilon,
\]
so that
\(
\sin\left(\pi\frac{\nu_N^0}{\varepsilon}\right)=0.
\)
We assume that $K(\delta)+\delta\geq \nu_N^0\geq \beta+2\delta$. Let $c_1>0$ such that if $|\sin(x)|\leq c_1$, then $\d(x,\pi\Z)\leq \frac{\pi}{4}$, and using inequality~\eqref{eq:approxI+}, let $c_2(\delta)>0$ such that if
\begin{equation}\label{eq:condI+}
|I(\varepsilon,\nu)|\leq c_2(\delta)\sqrt{\varepsilon},
\end{equation}
then for $\varepsilon<\varepsilon_0(\delta)$, there holds
\[
\left|\sin\left(\pi\frac{\nu}{\varepsilon}\right)\right|\leq c_1.
\]
Let $[\nu_*,\nu^*]\subset[\beta+\delta,K(\delta)+2\delta]$ be the largest interval containing $\nu_N^0$ and on which inequality~\eqref{eq:condI+} holds.

By definition, on this interval, we have
\[
\left|\cos\left(\pi\frac{\nu}{\varepsilon}\right)\right|\geq \sqrt{1-c_1^2}.
\]
Moreover, recall that on $[\beta+\delta,K(\delta)+2\delta]$, we have $S_2(x_2(\nu),\nu)\geq 0$ and $x_2(\nu)=\cosh^{-1}(\frac{\nu}{\beta})\in[ \frac{1}{C(\delta)},C(\delta)]$. Therefore, when $\varepsilon<\varepsilon_0(\delta)$, inequality~\eqref{eq:I2} implies
\[
|I_2(\varepsilon,\nu)|\geq \frac{\sqrt{\varepsilon}}{C(\delta)}e^{S_2(x_2(\nu),\nu)/\varepsilon}-C(\delta)\varepsilon\geq \frac{\sqrt{\varepsilon}}{C'(\delta)}e^{S_2(x_2(\nu),\nu)/\varepsilon}.
\]
As a consequence, we get
\begin{align*}
\left|\partial_{\nu}\left(\Re(I_1(\varepsilon,\nu))-\sin\left(\pi\frac{\nu}{\varepsilon}\right)I_2(\varepsilon,\nu)\right)\right|
	&=\left|\partial_{\nu}\Re(I_1(\varepsilon,\nu))
	-\sin\left(\pi\frac{\nu}{\varepsilon}\right)\partial_\nu I_2(\varepsilon,\nu)
	-\frac{\pi}{\varepsilon}\cos\left(\pi\frac{\nu}{\varepsilon}\right) I_2(\varepsilon,\nu)\right|\\
	&\geq \frac{\sqrt{1-c_1^2}}{C(\delta)\sqrt{\varepsilon}}e^{S_2(x_2(\nu),\nu)/\varepsilon}- \frac{c_1 C(\delta)}{\sqrt{\varepsilon}} -  C(\delta),
\end{align*}
or when we then choose $\varepsilon<\varepsilon_0(\delta)$ and $c_1$ such that for instance $\sqrt{1-c_1^2}\geq c_1/2$,
\begin{equation*}
\left|\partial_{\nu}I(\varepsilon,\nu)\right|
	\geq \frac{1}{C(\delta)\sqrt{\varepsilon}}e^{S_2(x_2(\nu),\nu)/\varepsilon}
	\geq \frac{1}{C(\delta)\sqrt{\varepsilon}}.
\end{equation*}
Therefore, on $[\nu_*,\nu^*]$, the derivative stays of the same sign.

On the other hand, since $\sin(\pi\nu_N^0/\varepsilon)=0$, then inequality~\eqref{eq:approxI+} implies that $|I(\varepsilon,\nu_N^0)|\leq C(\delta)\varepsilon\leq c_2(\delta)\sqrt{\varepsilon}/2$.
We deduce by monotonicity that there exists a unique $\nu_N\in[\nu_*,\nu^*]$ such that  $I(\varepsilon,\nu)=0$. Moreover, since inequality~\eqref{eq:condI+} holds on $[\nu_*,\nu^*]$ by definition, we have $d(\nu,\varepsilon\Z)\leq \frac{\varepsilon}{4}$ on this interval, and by continuity, the same integer $N$ as for $\nu_N^0$ appears in the inequality
\[
\left|\nu_N-(N+1)\varepsilon\right|\leq \frac{1}{4}.
\]

Conversely, let $C_0(\delta)>0$ be a large constant, let $\varepsilon<\varepsilon_0(\delta)$, and let $\nu$ such that
\[
|\nu-\nu_N^0|\leq \frac{c_2(\delta)\varepsilon}{C_0(\delta)}.
\]
Since $\sin$ is $1$-Lipschitz, then
\[
\left|\sin\left(\pi\frac{\nu}{\varepsilon}\right)\right|
	=\left|\sin\left(\pi\frac{\nu}{\varepsilon}\right)-\sin\left(\pi\frac{\nu_N^0}{\varepsilon}\right)\right|
	\leq \pi\frac{c_2(\delta)}{C_0(\delta)}.
\]
Using inequality~\eqref{eq:approxI+}, we deduce that when $\varepsilon<\varepsilon_0(\delta)$, we have 
\[
|I(\varepsilon,\nu)|\leq \frac{c_2(\delta)\sqrt{\varepsilon}}{2}+C(\delta)\varepsilon\leq c_2(\delta)\sqrt{\varepsilon},
\]
and therefore $\nu\in[\nu_*,\nu^*]$: we have proven $[\nu_N^0-\frac{c_2(\delta)\varepsilon}{C_0(\delta)},\nu_N^0+\frac{c_2(\delta)\varepsilon}{C_0(\delta)}]\subset [\nu_*,\nu^*]$ when $\frac{c_2(\delta)\varepsilon}{C_0(\delta)}<\delta$.
The precise asymptotics that a Lax eigenvalue needs to satisfy~\eqref{eq:defN+}
\[
|\nu-(N(\varepsilon,\nu)+1)\varepsilon|\leq C(\delta)\varepsilon\sqrt{\varepsilon}
\]
ensure that given $N$ such that $K(\delta)+\delta\geq \nu_N^0\geq \beta+2\delta$, necessarily $\nu_N\in[\nu_*,\nu^*]$, therefore there is exactly one Lax eigenvalue.

To conclude, if $K(\delta)\geq \nu\geq \beta+\delta$ is a large eigenvalue, then there exists $K(\delta)+\delta\geq \nu_N^0\geq \beta+\delta/2$ as above. Conversely, if $K(\delta)+\delta/2 \geq \nu_N^0\geq \beta+\delta/4$, then there exists exactly one large eigenvalue  associated to $N$ such that $K(\delta)+\delta/2\geq \nu\geq\beta+\delta/2$, and by restriction we get all the Lax eigenvalues satisfying $K(\delta)\geq \nu\geq\beta+\delta$.

\paragraph{Other eigenvalues}
We now establish an upper bound on the number of eigenvalues which do not fit into any of the two categories listed above. 

First, for every
\[
\frac{K(\delta)+\delta}{\varepsilon}\geq N+1\geq \frac{\beta+2\delta}{\varepsilon},
\]
there holds
\[
K(\delta)+\delta\geq \nu_N^0:=(N+1)\varepsilon\geq \beta+2\delta
\]
so that we get a Lax eigenvalue $\nu_N\in[\beta+\delta,K(\delta)+2\delta]$ which satisfies
\[
|\nu_N-(N+1)\varepsilon|\leq \frac{1}{4}.
\]
But we also have the asymptotic expansion as $n\to\infty$
\(
\lambda_n-n\varepsilon\to 0,
\)
more precisely, by definition of $K(\delta)$, for $n\geq K(\delta)/\varepsilon$, one has
\[
|\lambda_n-n\varepsilon|<\delta.
\]
When $\delta<1/4$, since $N$ is uniquely defined, this implies that the $n$-th Lax eigenvalue satisfies $\lambda_n=\nu_n-\varepsilon$.

We now establish a lower bound on the number of small Lax eigenvalues $\lambda$ satisfying $\lambda+\varepsilon\in[-\beta+\delta,\beta-\delta]$. Condition \eqref{cond:nu0}
\begin{equation*}
\int_{-\nu_N^0}^{\beta}F(\eta)\d\eta
-\frac{3\varepsilon}{4}	-\varepsilon N
	=0
\end{equation*}
is true for some $\nu_N^0\in[-\beta+2\delta,\beta-2\delta]$  as soon as
\begin{equation*}
\int_{\beta-2\delta}^{\beta}F(\eta)\d\eta
	\leq\varepsilon N+\frac{3\varepsilon}{4}
	\leq \int_{-\beta+2\delta}^{\beta}F(\eta)\d\eta.
\end{equation*}
We deduce that there are at least
\[
\frac{1}{\varepsilon} \int_{-\beta+2\delta}^{\beta-2\delta}F(\eta)\d\eta-1
\]
suitable integers $N$. But since $\int_{-\beta}^{\beta}F(\eta)\d\eta=\beta$ and $F(\eta)\leq 1$ for every $\eta$, we have
\[
\frac{1}{\varepsilon}\left|\int_{-\beta+2\delta}^{\beta-2\delta}F(\eta)\d\eta
	-\beta\right|
	\leq \frac{\beta\delta}{\varepsilon}.
\]

As a conclusion, among the integers $N$ which do not lead to large eigenvalues $\nu_N\geq\beta+\delta$, that is, which satisfy
\[
N\leq \frac{\beta+2\delta}{\varepsilon}-1,
\]
there are at least $\frac{\beta-C\delta}{\varepsilon}$ of them which lead to small eigenvalues $|\nu_N|\leq\beta-\delta$. The remaining eigenvalues consist of no more than $\frac{C\delta}{\varepsilon}$ indexes.


\begin{proof}[Proof of Theorem~\ref{thm:asymptotics}]
When $\lambda_n+\varepsilon,\lambda_p+\varepsilon\in\Lambda_+(\delta)$, we have already seen from the study of the large eigenvalues that if $n\leq K(\delta)/\varepsilon$, then
\(
|\lambda_n-n\varepsilon|\leq C(\delta)\varepsilon\sqrt{\varepsilon},
\)
whereas  if $n> K(\delta)/\varepsilon$, then
\(
|\lambda_n-n\varepsilon|\leq C\delta.
\)

In the former part, we have seen that there are at most $\frac{C\delta}{\varepsilon}$ eigenvalues such that $\lambda_n+\varepsilon\not\in\Lambda_-(\delta)\cup\Lambda_+(\delta)$.
Reasoning with $\delta/2$ instead of $\delta$, the indexes counting argument from the former part implies the index $N$ leads to an eigenvalue $\nu_N=\lambda_n+\varepsilon\in[\beta-\delta,\beta-\delta/2]$ as soon as
\begin{equation*}
\int_{-\beta+\delta/4}^{\beta}F(\eta)\d\eta
	\leq\varepsilon N+\frac{3\varepsilon}{4}
	\leq \int_{-\beta+\delta/2}^{\beta}F(\eta)\d\eta.
\end{equation*}
Since $F\geq 1/C$ on $(-\infty,0]$, we deduce that that there are at least 
\[
\frac{1}{\varepsilon}\int_{-\beta+\delta/4}^{-\beta+\delta/2}F(\eta)\d\eta\geq \frac{\delta}{C\varepsilon}
\]
such indexes, or at least $\frac{\delta}{C\varepsilon}$ Lax eigenvalues $\lambda_n+\varepsilon\in[\beta-\delta,\beta-\delta/2]$. Similarly, one knows that there are at least $\frac{\delta}{C\varepsilon}$ Lax eigenvalues such that $\lambda_n+\varepsilon\in[-\beta+\delta/2,-\beta+\delta]$.

Finally, in the region $\lambda_n+\varepsilon\in\Lambda_-(\delta)=[-\beta+\delta,\beta-\delta]$, the counting of the indexes leads to the same conclusion, except that only the lower bound $F(\eta)\geq1/C(\delta)$ holds on $(-\infty,\beta-\delta/4]$, so that there are between $\frac{1}{C(\delta)\varepsilon}$ and $\frac{C\delta}{\varepsilon}$ eigenvalues in the region $[-\beta,-\beta+\delta]$. Therefore there exists $\frac{1}{C(\delta)\varepsilon}\leq N_0\leq \frac{C\delta}{\varepsilon}$ such that for every $n$, 
if $\lambda_n+\varepsilon=\nu_N\in\Lambda_-(\delta)$, then $n-N= N_0$. Inequality~\eqref{ineq:defN-} becomes
\begin{equation*}
\left|\int_{-\lambda-\varepsilon}^{\beta}F(\eta)\d\eta
-\frac{3\varepsilon}{4}
	-\varepsilon (n-N_0)\right|
	\leq C(\delta)\varepsilon\sqrt{\varepsilon}
\end{equation*}
for every $\lambda_n+\varepsilon\in\Lambda_-(\delta)$.
This implies the small eigenvalues point of the Theorem.
\end{proof}

\bibliography{mybib.bib}{}
\bibliographystyle{abbrv}
\Addresses

\end{document}